\documentclass[a4paper,12pt]{article}
\pdfoutput=1
\usepackage{amsfonts}
\usepackage{amssymb}
\usepackage{amsthm}
\usepackage{graphicx} 

\newtheorem{theorem}{Theorem}
\newtheorem{lemma}[theorem]{Lemma}

\begin{document}
\begin{center}{\Large\bf The simple graph threshold number $\sigma(r,s,a,t)$} \end{center}
\bigskip
\centerline{A.J.W. Hilton and A. Rajkumar}
\bigskip

\begin{abstract}
For $d \ge 1$, $s \ge 0$ a $(d, d+s)$-{\em graph} is a graph whose degrees all lie in the interval $\{d, d+1, \ldots, d + s\}$. For $r \ge 1$, $a \ge 0$, an $(r, r+a)$-{\em factor} of a graph $G$ is a spanning $(r, r+a)$-subgraph of $G$.  An $(r, r+a)$-{\em factorization} of a graph $G$ is a decomposition of $G$ into edge-disjoint $(r, r+a)$-factors. A graph is $(r, r+a)$-{\em factorable} if it has an $(r, r+a)$-factorization.

Let $\sigma(r, s, a, t)$ be the least integer such that, if $d \ge \sigma(r, s, a, t)$, then every $(d, d+s)$-simple graph $G$ is $(r,r+a)$-factorable with $x$ factors for at least $t$ different values of $x$.

In this paper we evaluate $\sigma(r,s,a,t)$ for all values of $r, s, a$ and $t$. We also show that if $a \ge 2$ and $r \ge 1$, then, when $r$ is even and $a$ is odd, every $(d, d+s)$-simple graph $G$ has an $(r, r+a)$-factorization with $x$ factors if and only if 
$$ \frac{d+s}{r+a}\, < x \le \frac{d}{r}\,,$$ 
and we prove similar statements for other parities of $r$ and $a$.
\end{abstract}

\section{Introduction}\label{sec:introduction}

For $d\ge1$, $s\ge0$, a $(d, d+s)$-graph is a graph whose degrees all lie in the interval $\{d, d+1, \ldots, d+s\}$.  For $r \ge 1, a \ge 0$, an $(r,r+a)$-{\em factor} of a graph $G$ is a spanning $(r, r+a)$-subgraph of $G$.  An $(r, r+a)$-{\em factorization} of a graph $G$ is a decomposition of $G$ into edge-disjoint $(r, r+a)$-factors.  If $G$ has an $(r, r+a)$-factorization then we say it is $(r, r+a)$-{\em factorable}. Sometimes when there can be no confusion we refer simply to factors, rather than $(r, r+a)$-factors.

For $r \ge 0$, $s \ge 0$, $a \ge 0$ and $t \ge 1$, let $\sigma(r, s, a, t)$ be the least integer such that, if $d \ge \sigma(r, s, a, t)$, then every $(d, d+s)$-simple graph $G$ has an $(r, r+a)$-factorization into $x$ $(r, r+a)$-factors for at least $t$ different values of $x$.  The number $\sigma(r, s, a, t)$ is called the simple graph $(r, s, a, t)$-{\em threshold number}.  In this paper we evaluate $\sigma(r, s, a, t)$.

Let us illustrate our terminology with a few examples.  By Vizing's theorem \cite{Vizing64}, every simple $(d, d+s)$-graph has a $(0,1)$-factorization into $d + s + 1$ $(0,1)$-factors.  Thus
$$\sigma(0,s,1,1)=0 \; \; {\rm for} \;\;s \ge 0\,.$$
[In fact, you can deduce this without using Vizing's theorem.]  Similarly by Gupta's theorem \cite{Gupta66}, \cite{Hilton75}, for $d \ge 1$ every $d$-regular graph has a $(1, 2)$-factorization into $d-1$ $(1,2)$-factors.  Thus
$$ \sigma(1, 0, 1, 1) = 1\,.$$

To give an example illustrating the parameter $t$, we may take a proper edge-colouring with 30 colours of a $29$-regular simple graph $G$, which exists by Vizing's theorem.  Let us combine the colours in threes, so that there are 10 sets of combined colours.  This gives a $(2,3)$-factorization of $G$ with $10$ $(2,3)$-factors.  Now take the same $29$-regular graph and form an edge-covering with $28$ colours, so that each colour appears on an edge at each vertex; this exists by Gupta's theorem.  Combine these colours together in two's, so that there are $14$ sets of combined colours.  This gives another $(2,3)$-factorization of $G$, but this time there are $14$ $(2,3)$-factors.  It was shown in \cite{Hilton09} that in fact $G$ has a $(2,3)$-factorization with $x$ colours for each $x \in \{10, 11, 12, 13, 14\}$ and for  no other values of $x$.  Moreover, it was shown in \cite{Hilton09} that
$$\sigma(2, 0, 1, 5) = 28\,.$$
Thus there is a regular simple graph of degree $27$ which does not have a $(2,3)$-factorization with $x$ $(2,3)$-factors for $5$ different values of $x$, but, if $d \ge 28$, then every $d$-regular simple graph does have a  $(2,3)$-factorization with $x$ factors for at least $5$ different values of $x$.

\subsection{Analogous threshold numbers}\label{subsec:analogous}

The threshold numbers for several analogous concepts have already been evaluated.  Let $\beta(r, s, a, t)$ be the analogous threshold number for bipartite multigraphs, and let $\beta_s (r, s, a, t)$ be the analogous threshold number for bipartite simple graphs.  Let $\pi(r, s, a, t)$ be the analogous threshold number for {\em pseudographs} (also known  as general graphs, that is graphs where multiple edges and multiple loops are allowed).  Finally let $\mu(r, s, a, t)$ be the analogous threshold number for multigraphs (that is, pseudographs with no loops).

For $r, t \ge 1$ and $s, a \ge 0$ we define a number $N(r, a, s, t)$ by
$$ N(r, a, s, t) = r \left\lceil \frac{r t+s-1}{a} \right\rceil + (t - 1)r\,.$$

For bipartite graphs we showed \cite{Hilton10}, \cite{HiltonRaj1},

\begin{theorem} For $r, t \ge 1$ and $s, a \ge 0$,
$$ \beta(r, s, a, t) = \beta_s(r, s, a, t) = N(r, s, a, t)\,.$$
\label{T1}\end{theorem}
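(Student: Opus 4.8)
The plan is to reduce the problem to an exact degree characterisation for bipartite graphs and then to a purely arithmetic count. The key tool is the balanced (equitable) edge-colouring theorem of de Werra: every bipartite multigraph $G$ can, for any $x \ge 1$, be edge-coloured with $x$ colours so that at each vertex $v$ every colour is used either $\lfloor \deg_G(v)/x\rfloor$ or $\lceil \deg_G(v)/x\rceil$ times. First I would record the resulting characterisation: a bipartite multigraph $G$ has an $(r,r+a)$-factorization into $x$ factors if and only if $rx \le \delta(G)$ and $\Delta(G) \le (r+a)x$. The forward implication is the elementary observation that at a vertex of degree $\deg(v)$ the $x$ factor-degrees lie in $[r,r+a]$ and sum to $\deg(v)$, forcing $rx \le \deg(v) \le (r+a)x$. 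For the converse, note that $rx \le \delta(G)$ gives $\lfloor \deg(v)/x\rfloor \ge r$ and $\Delta(G) \le (r+a)x$ gives $\lceil \deg(v)/x\rceil \le r+a$ at every $v$, so each colour class of a balanced $x$-colouring is an $(r,r+a)$-factor. This argument is insensitive to multiplicities, so it applies verbatim to simple bipartite graphs (whose colour classes are automatically simple); this is what will ultimately force $\beta = \beta_s$.

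Second, I would translate the characterisation into a count. For a $(d,d+s)$-bipartite graph the admissible numbers of factors are exactly the integers $x$ with $\lceil (d+s)/(r+a)\rceil \le x \le \lfloor d/r\rfloor$, and the worst case (fewest admissible $x$) is attained by any graph with $\delta = d$ and $\Delta = d+s$. Writing
$$ f(d) = \left\lfloor \frac{d}{r}\right\rfloor - \left\lceil \frac{d+s}{r+a}\right\rceil + 1, $$
the task becomes: find the least $d$ such that $f(d') \ge t$ for every $d' \ge d$. A useful structural fact is the periodicity relation $f(d + r(r+a)) = f(d) + a$, which shows $f \to \infty$ and that the running minimum of $f$ increases by $a$ over each block of length $r(r+a)$; this guarantees a genuine threshold and reduces the condition ``for all $d' \ge d$'' to a finite check within one period.

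Third comes the arithmetic identification of the threshold with $N = r\lceil (rt+s-1)/a\rceil + (t-1)r$. Writing $m = \lceil (rt+s-1)/a\rceil$, one has $N/r = m + t - 1 \in \mathbb{Z}$, and a short computation using $am \ge rt+s-1$ gives $\lceil (N+s)/(r+a)\rceil \le m$, hence $f(N) \ge t$; the periodicity relation then propagates this to all $d \ge N$. For sharpness, $\lfloor (N-1)/r\rfloor = m+t-2$ together with $am \le rt+s+a-2$ yields $f(N-1) < t$, and I would exhibit an explicit simple bipartite $(N-1,N-1+s)$-graph having minimum degree $N-1$ and maximum degree $N-1+s$; by the characterisation its admissible $x$ are precisely the $f(N-1) < t$ integers in the interval, so $N-1$ fails already for a simple graph. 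Combining the bounds gives $\beta_s \le \beta \le N \le \beta_s$, and hence equality.

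I expect the main obstacle to be the last step: the characterisation and the periodicity are clean, but pinning the threshold down to the exact value $N$ — rather than merely to within an additive constant — requires carefully controlling the interacting floor and ceiling terms across a full period and checking the boundary cases (small $a$, and $r$ or $r+a$ dividing the relevant quantities), together with verifying that the extremal degree sequence is realisable by a genuine simple bipartite graph.
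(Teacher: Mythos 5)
Your route is, in outline, the right one, and it is essentially the one the authors intend: this paper does not actually prove Theorem \ref{T1} (it is quoted from \cite{Hilton10}, \cite{HiltonRaj1}), but it explicitly says the bipartite arguments are ``quite easy'' and serve as the template for the simple-graph case, where exactly your mechanism appears -- the McDiarmid/de Werra equitable-colouring theorem (Theorem \ref{T17}) yields that a bipartite multigraph has an $(r,r+a)$-factorization into $x$ factors iff $rx\le\delta(G)$ and $\Delta(G)\le(r+a)x$ (compare Lemma \ref{L19}), and the threshold is then a matter of counting integers in an interval (compare Lemma \ref{L21} and Theorem \ref{T22}). Your characterisation, the reduction to $f(d)=\lfloor d/r\rfloor-\lceil (d+s)/(r+a)\rceil+1\ge t$ for all $d\ge d_0$, the realisability of the worst case by a simple graph (e.g.\ $K_{d,d+s}$, which settles $\beta=\beta_s$ at once), and both boundary computations ($f(N)\ge t$ from $am\ge rt+s-1$, and $f(N-1)\le t-1$ from $am\le rt+s+a-2$) are all correct.

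The one genuine gap is the sentence ``the periodicity relation then propagates this to all $d\ge N$.'' It does not: $f(d+r(r+a))=f(d)+a$ only controls the arithmetic progression $d=N+kr(r+a)$, and $f$ is \emph{not} monotone (for $r=2$, $a=1$, $s=0$ one has $f(6)=2$ but $f(7)=1$), so $f(N)\ge t$ by itself says nothing about the rest of the period $[N,N+r(r+a))$; you flag this as ``the main obstacle'' but never close it. The fix is short and needs no periodicity at all: for $d\ge N$ put $q=\lfloor d/r\rfloor\ge m+t-1$, so that $d+s\le qr+r-1+s$ and $qa\ge ma+(t-1)a\ge (rt+s-1)+(t-1)a$; combining these gives $d+s\le (q-t+1)(r+a)$, hence $\lceil (d+s)/(r+a)\rceil\le q-t+1$ and $f(d)\ge t$ for \emph{every} $d\ge N$. (This uniform treatment of $d=N+k$, $k\ge 0$, is precisely what the paper does in its analogous arguments, e.g.\ the case analysis on $k$ in Lemma \ref{L21} and Theorem \ref{T22}.) A last pedantic point: the argument, like the statement of $N$ itself, requires $a\ge 1$; for $a=0$ the quantity $N$ is undefined and the threshold is infinite, so that degenerate case should be excluded explicitly.
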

The quite easy arguments used to derive our results for bipartite graphs serve as a  template for our arguments for simple graphs. Our results on pseudographs were mainly a rather complicated deduction from the bipartite graph results.  Thus our results on pseudographs and our results for simple graphs are more or less independent, the only connection being via our results/arguments for bipartite graphs.

An easy deduction from Theorem \ref{T1} and first principles tells us:

\begin{lemma} For $r,t \ge 1$ and $s, a\ge 0$
$$N(r,a,s,t) = \beta_s(r,s,a,t) \le \sigma(r,a,s,t) \le \mu(r,s,a,t) \le \pi(r,s,a,t)\,.$$
\label{L2}\end{lemma}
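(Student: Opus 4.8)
The plan is to establish the chain of inequalities in Lemma~\ref{L2} by exploiting the inclusion relationships among the four graph classes (bipartite simple graphs, simple graphs, multigraphs, pseudographs), together with a monotonicity principle for threshold numbers. First I would observe that every bipartite simple graph is a simple graph, every simple graph is a multigraph, and every multigraph is a pseudograph. Since each threshold number is defined as the least degree~$d$ beyond which \emph{every} graph in the relevant class admits factorizations with $x$ factors for at least $t$ values of $x$, enlarging the class of graphs can only make the requirement harder to satisfy, and hence can only increase (or leave unchanged) the threshold. This gives the three inequalities $\sigma \le \mu \le \pi$ directly, and also $\beta_s \le \sigma$, once I phrase the monotonicity argument carefully.

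The key monotonicity lemma I would isolate is this: if $\mathcal{C}_1 \subseteq \mathcal{C}_2$ are two classes of graphs, and we define the corresponding threshold numbers $\tau_1, \tau_2$ in the obvious analogous way, then $\tau_1 \le \tau_2$. The proof is immediate from the definition: if $d \ge \tau_2$, then every $(d,d+s)$-graph in $\mathcal{C}_2$ has the required factorizations; in particular every $(d,d+s)$-graph in $\mathcal{C}_1$ does, since $\mathcal{C}_1 \subseteq \mathcal{C}_2$; hence $\tau_1 \le \tau_2$ because $\tau_1$ is the \emph{least} such integer for $\mathcal{C}_1$. Applying this with the inclusions (bipartite simple) $\subseteq$ (simple) $\subseteq$ (multigraphs) $\subseteq$ (pseudographs) yields $\beta_s \le \sigma \le \mu \le \pi$ in one stroke. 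The leftmost equality $N(r,a,s,t) = \beta_s(r,s,a,t)$ is not something I would need to prove here: it is precisely the content of Theorem~\ref{T1}, which I may assume.

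The one point requiring genuine care—and the step I expect to be the main obstacle—is verifying that the defining properties of these threshold numbers are truly compatible across the classes, so that the monotonicity lemma applies cleanly. Specifically, I must check that the notions of ``$(d,d+s)$-graph'' and ``$(r,r+a)$-factorization'' mean the same thing when interpreted within each class, and that a graph belonging to a smaller class is being tested against exactly the same factorization requirement when regarded as a member of the larger class. For bipartite graphs there is a subtlety: a bipartite simple graph, viewed as an arbitrary simple graph, must still satisfy the same factorization demands, and one should confirm that no parity or bipartiteness obstruction is lost in this reinterpretation—but since the factorization property is purely a property of the abstract graph and its edge set, independent of any bipartition, this causes no difficulty. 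Thus the main work is bookkeeping: confirming that the four definitions are genuinely the ``same'' property restricted to nested classes, after which the inequalities follow formally.
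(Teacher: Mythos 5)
Your proposal is correct and follows essentially the same route as the paper: the paper's one-line proof likewise derives $\beta_s \le \sigma \le \mu \le \pi$ from the observation that each class is contained in the next (``each bipartite simple graph is \emph{a fortiori} a simple graph''), and takes the equality $N = \beta_s$ from Theorem~\ref{T1}. Your explicit monotonicity lemma is just a careful spelling-out of the same inclusion argument, so there is nothing substantive to add.
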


\begin{proof} Each bipartite simple graph is {\em a fortiori} a simple graph, so 
$$ \beta_s(r,s,a,t) \; \le \; \sigma(r,s,a,t)\,.$$
The other inequalities follow similarly.
\end{proof}

For $r$ and $a$ both even we have \cite{Hilton10}, \cite{HiltonRaj1},

\begin{theorem} For $r, a \ge 2$, even, and $s \ge 0$, $t \ge 1$, 
$$ \sigma(r, s, a, t) = \mu(r, s, a, t) = \pi(r, s, a, t) = N(r, s, a, t)\,.$$
\label{T3}\end{theorem}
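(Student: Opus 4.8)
The plan is to collapse the chain of inequalities supplied by Lemma \ref{L2}. That lemma already gives
\[ N(r,s,a,t) = \beta_s(r,s,a,t) \le \sigma(r,s,a,t) \le \mu(r,s,a,t) \le \pi(r,s,a,t), \]
so to force all four quantities to equal $N$ it suffices to prove the single reverse inequality $\pi(r,s,a,t) \le N(r,s,a,t)$; the four values are then sandwiched into equality. Unwound, $\pi \le N$ asserts that for every $d \ge N(r,s,a,t)$, every $(d,d+s)$-pseudograph $G$ has an $(r,r+a)$-factorization into $x$ factors for at least $t$ distinct values of $x$.

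First I would record the degree-counting constraints. If $G$ factors into $x$ $(r,r+a)$-factors, then at each vertex $v$ the $x$ factors split the $\deg(v)$ incident edge-ends into blocks of size in $[r,r+a]$, forcing $xr \le \deg(v) \le x(r+a)$; ranging over $d \le \deg(v) \le d+s$ confines the feasible $x$ to the integer interval $\lceil (d+s)/(r+a)\rceil \le x \le \lfloor d/r\rfloor$. The heart of the matter is the converse for \emph{even} $r$ and \emph{even} $a$: every integer $x$ in this interval is actually realized. This is precisely the case with no parity obstruction, which is why $\sigma$, $\mu$, $\pi$ all collapse to the bipartite value here while other parities will separate them. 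I would prove this sufficiency along the route the paper advertises, deducing the pseudograph statement from the bipartite Theorem \ref{T1}. Given $G$, form its bipartite double $H$ on two copies $V_0,V_1$ of $V(G)$, replacing each edge $uv$ by $(u,0)(v,1)$ and $(v,0)(u,1)$ and each loop at $u$ (counted twice) by two copies of $(u,0)(u,1)$; one checks $\deg_H(u,0)=\deg_H(u,1)=\deg_G(u)$, so $H$ is a $(d,d+s)$-bipartite multigraph and Theorem \ref{T1} furnishes $(r,r+a)$-factorizations of $H$ for all $x$ in the required range.

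The real work, and what I expect to be the main obstacle, is folding such a factorization of $H$ back to one of $G$. The side-swapping involution $\tau\colon (u,i)\mapsto(u,1-i)$ interchanges the two preimages $(u,0)(v,1)$ and $(v,0)(u,1)$ of each edge $uv$; if a factor $F_i$ of $H$ is $\tau$-invariant then it contains both preimages of $uv$ or neither, and setting $G_i=\{uv : \text{both preimages lie in } F_i\}$ gives $\deg_{G_i}(u)=\deg_{F_i}(u,0)\in[r,r+a]$, so the $G_i$ form the desired $(r,r+a)$-factorization of $G$. Thus everything reduces to producing a $\tau$-\emph{invariant} factorization of $H$ with the prescribed number of parts, handling loops and irregular even degrees simultaneously; this is the ``rather complicated deduction'' and is exactly where the evenness of $r$ and $a$ must be exploited, via an Eulerian-orientation / Petersen $2$-factor balancing argument that re-pairs the two half-edges of each original edge without disturbing the degree bounds.

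Finally, once sufficiency is established the evaluation is purely combinatorial and identical to the bipartite computation: for the worst-case degree sequence (minimum degree $d$, maximum $d+s$) the realizable $x$ fill exactly the interval $[\lceil (d+s)/(r+a)\rceil,\ \lfloor d/r\rfloor]$, and the least $d$ guaranteeing $t$ integers in it is precisely the quantity already shown to equal $N(r,s,a,t)$ in Theorem \ref{T1}. Writing $d=N=r(m+t-1)$ with $m=\lceil (rt+s-1)/a\rceil$ makes $\lfloor d/r\rfloor=m+t-1$, and the required bound $\lceil (d+s)/(r+a)\rceil\le m$ reduces to $am\ge r(t-1)+s$, which holds by the definition of $m$. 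This yields $\pi(r,s,a,t)\le N(r,s,a,t)$ and completes the sandwich.
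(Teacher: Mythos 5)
Your reduction is the right one, and it matches the paper's effective route: by Lemma \ref{L2} everything hinges on the single inequality $\pi(r,s,a,t)\le N(r,s,a,t)$ (the paper quotes Theorem \ref{T3} from earlier work, but its ingredients appear here as Lemma \ref{L15}, Lemma \ref{L16} and Lemma \ref{L21}). The genuine gap is in your sufficiency argument for pseudographs. You pass to the bipartite double cover $H$ (each edge $uv$ replaced by $(u,0)(v,1)$ and $(v,0)(u,1)$) and observe, correctly, that a factorization of $H$ folds back to one of $G$ only if it is invariant under the involution $\tau$. But you never produce such an invariant factorization: Theorem \ref{T1} (or the equitable-colouring Theorem \ref{T17}) gives you \emph{some} factorization of $H$ with no symmetry whatsoever, and arranging $\tau$-invariance is essentially as hard as factorizing $G$ directly, which is why the literature does not use the double cover here. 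The reduction that actually works (behind Lemma \ref{L15}) is different: make all degrees even, take an Eulerian orientation, and form the bipartite graph on $V_0\cup V_1$ in which each directed edge $u\to v$ becomes the single edge $(u,0)(v,1)$. Each edge of $G$ then has \emph{one} image rather than two, so no invariance is needed; the price is that degrees are halved, so a bipartite $(r,r+a)$-factorization pulls back to a $(2r,2r+2a)$-factorization of $G$. That is exactly why Lemma \ref{L15} is stated for $(2d,2d+2s)$-pseudographs and $(2r,2r+2a)$-factors, and why the paper then needs the parity adjustments of Lemma \ref{L16} (replacing $(d,d+s)$ by $(d-1,(d-1)+(s+1))$, etc.) to reach all $(d,d+s)$-pseudographs when $r$ and $a$ are even. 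Your phrase about an ``Eulerian-orientation / Petersen $2$-factor balancing argument'' gestures at this, but as written your construction is the wrong one and the central step of the proof is missing.

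There is a second, smaller gap at the end. You verify that the window $\left[\left\lceil \frac{d+s}{r+a}\right\rceil,\left\lfloor \frac{d}{r}\right\rfloor\right]$ contains $t$ integers only for $d$ exactly equal to $N(r,s,a,t)$, and then assert that $N$ is the least $d$ guaranteeing $t$ integers. But the definition of $\sigma$ requires this for \emph{every} $d\ge N$, and the count of integers in the window is not monotone in $d$: for instance, with $r=a=2$, $s=0$, the window contains two integers when $d=4$ but only one when $d=5$. This is precisely why the proof of Lemma \ref{L21} spends most of its length on the case analysis $0\le k\le r+c$ versus $k=r+c+y$ with $(p-1)(r+a)<y\le p(r+a)$; your computation covers only the first case with $k=0$.
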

For pseudographs we have the following two theorems \cite{Hilton10}, \cite{HiltonRaj1}.  The first deals with the special cases when $a = 0$ or $1$.  First let us remark that the notation $\pi(r, s, 0, t) = \infty$ means that there is no smallest value of $d$, say $d = d_0$, such that, if $d \ge d_0$ then each $(d, d+s)$-pseudograph has an $(r, r+0)$-factorization with $x$ factors for at least $t$ values of $x$.

\begin{theorem} Let $r, s$ and $t$ be integers with $r$ and $t$ positive and $s$ non-negative.  Then 
$$\pi(r, s, 0, t) = \infty$$
and
$$\pi(r, s, 1, t) = \left\{ \begin{array}{cl} 2 & {\rm if} \;\; r = 2, s = 0 \;\;{\rm and} \;\; t = 1\,, \\
1 & {\rm if} \;\; r = 1, s = 0 \;\; {\rm and}\;\; t = 1\,,  \\
\infty & {\rm otherwise}. \end{array} \right.$$
\label{T4}\end{theorem}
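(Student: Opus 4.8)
The plan is to treat the two assertions separately, and within the statement about $\pi(r,s,1,t)$ to split into the handful of cases where the value is finite and the remaining cases where it is $\infty$. The single tool underlying almost every $\infty$ statement is a \emph{loop-parity} observation, which I would isolate as a preliminary lemma: if a vertex $v$ of a pseudograph carries only loops, then in any $(r,r+1)$-factorization each factor meets $v$ in an even degree, so every factor has degree exactly $r$ at $v$ when $r$ is even and exactly $r+1$ when $r$ is odd; since all $x$ factors must meet $v$, this pins the number of factors to the single value $\deg(v)/r$ (resp.\ $\deg(v)/(r+1)$). I would record alongside it the refinement that attaching one non-loop edge at such a vertex shifts the forced count by one, since the edge forces exactly one factor to have odd degree at $v$ (namely $r+1$ when $r$ is even, $r$ when $r$ is odd), the other factors keeping the parity dictated by the loops.

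For $\pi(r,s,0,t)=\infty$ I would use that an $r$-factorization into $x$ factors forces $G$ to be $xr$-regular, so at most one value of $x$ can ever occur; hence if $t\ge2$, or if $s\ge1$ (take any non-regular $(d,d+s)$-pseudograph), the requirement already fails for arbitrarily large $d$. For $s=0$, $t=1$ I would exhibit for every bound $d_0$ a bad $d$-regular pseudograph with $d\ge d_0$: when $r\ge2$ choose $d\not\equiv0\pmod r$ so that $xr=d$ is unsolvable, and when $r=1$ use a vertex carrying only loops, which no $1$-factor can cover.

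For $\pi(r,s,1,t)$ the $\infty$ cases divide as follows. If $t\ge2$ (any $r,s$), a single all-loops vertex of suitable even degree forces at most one value of $x$, fewer than $t$. If $r\ge3$ and $t=1$, a single all-loops vertex of even degree $d$ with $r\nmid d$ (for $r$ even) or $(r+1)\nmid d$ (for $r$ odd) admits no factorization at all, and such $d$ exist arbitrarily large precisely because $r\ge3$. The remaining cases are $r\in\{1,2\}$ with $s\ge1$ and $t=1$, where I would take a disjoint union of two loop-heavy gadgets forcing \emph{different} values of $x$ via the refinement above: for $r=1$ and $d$ even, one component is a vertex with $d/2$ loops (forcing $x=d/2$) and the other is two vertices each with $d/2$ loops joined by one edge (degree $d+1$, forcing $x=d/2+1$); for $r=2$ and $d$ odd, one component is a vertex with $(d+1)/2$ loops (forcing $x=(d+1)/2$) and the other is two vertices each with $(d-1)/2$ loops joined by one edge (degree $d$, forcing $x=(d-1)/2$). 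In each case all degrees lie in $\{d,d+1\}\subseteq\{d,\dots,d+s\}$ and the forced values are incompatible, so no $(r,r+1)$-factorization exists for arbitrarily large $d$.

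For the two finite values I would verify the lower bounds directly—at $d=0$ the empty graph contains no $(1,2)$-factor, and at $d=1$ a $1$-regular pseudograph is too sparse to contain any $(2,3)$-factor—giving $\pi(1,0,1,1)\ge1$ and $\pi(2,0,1,1)\ge2$. For the matching upper bounds I would invoke the $(r,r+1)$-factorization theory for pseudographs: for even $d$ a $2$-factorization (which exists for every even-regular pseudograph) already supplies a $(1,2)$- (resp.\ $(2,3)$-) factorization since each $2$-factor has all degrees equal to $2$, while the odd-degree cases follow from the general pseudograph results underlying Theorem~\ref{T1} and its deductions. The hardest part will be these conflicting-gadget constructions for $r\in\{1,2\}$, $s\ge1$—in particular certifying that each gadget forces exactly the claimed $x$ when $s$ is as small as $1$—together with handling the odd-degree regular pseudographs in the two finite cases rigorously rather than by appeal to the even case alone.
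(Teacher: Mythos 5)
First, a point of reference: this paper does not actually prove Theorem~\ref{T4}; it is quoted from \cite{Hilton10} and \cite{HiltonRaj1}, so there is no in-paper proof to compare against. Judging your proposal on its own merits: the loop-parity lemma and its one-pendant-edge refinement are correct, and they do carry all of the $\infty$ assertions. Your treatment of $\pi(r,s,0,t)=\infty$ is complete, and your case split for $\pi(r,s,1,t)=\infty$ ($t\ge 2$ via an all-loops vertex forcing a unique $x$; $r\ge 3$, $t=1$ via the divisibility obstruction $r\nmid D$ or $(r+1)\nmid D$ for even $D$; $r\in\{1,2\}$, $s\ge 1$, $t=1$ via the two-gadget disjoint union forcing incompatible values of $x$) is exhaustive and each construction checks out. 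The lower bounds $\pi(1,0,1,1)\ge 1$ and $\pi(2,0,1,1)\ge 2$ are also fine.

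The genuine gap is in the upper bounds for the two finite cases when $d$ is odd. You invoke ``the general pseudograph results underlying Theorem~\ref{T1} and its deductions,'' but that machinery cannot deliver what you need: Theorem~\ref{T1} is about bipartite multigraphs, and the pseudograph consequence recorded in this paper (Lemma~\ref{L15}) requires \emph{all} parameters even --- it produces $(2r,2r+2a)$-factorizations of $(2d,2d+2s)$-pseudographs, so it can never produce a $(1,2)$- or $(2,3)$-factorization, whose degree bounds are odd. So for odd-regular pseudographs you have no argument at all, and this is precisely where the content of $\pi(1,0,1,1)=1$ and $\pi(2,0,1,1)=2$ lies. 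For $r=1$ the gap is repairable by an elementary trick: if $G$ is $(2k+1)$-regular then $|V(G)|$ is even, so one may add an arbitrary perfect matching $M$ (multiple edges are allowed) to get a $(2k+2)$-regular pseudograph, apply Petersen's $2$-factorization (valid for pseudographs, with a loop counting as a cycle of length one), and delete $M$: each vertex loses one edge from exactly one $2$-factor, so the resulting factors have all degrees in $\{1,2\}$. For $r=2$ and odd $d$, however, no such quick fix works: the analogous pairing of factors produces degrees up to $4$, and an inductive peeling argument (remove one $(2,3)$-factor, recurse) forces you out of the regular setting into $(d,d+1)$-graphs with degree-parity constraints at each step --- this is essentially the substance of the cited pseudograph paper, and your proposal, which itself flags this as ``the hardest part,'' leaves it unproven.
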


There is a further special case when $a=2$ and $r$ is odd, which we hope to prove in a sequel to \cite{HiltonRaj1}.

\begin{theorem} Let $r, s$ and $t$ be integers with $r \ge 1$ odd, $t \ge 1$ and $s \ge 0$. Then 
$$\pi(r,s,2,t) \left\{ \begin{array}{cl} \infty & {\rm if} \;\; r \ge 1 \;\; {\rm and \; either} \;\; s > 1 \; {\rm or} \; t > 1\,, \\
1 & {\rm if} \;\;r=1\,, \; s\in \{0,1\} \; \; {\rm and} \;\; t=1\,. \end{array} \right.$$
\label{T5}\end{theorem}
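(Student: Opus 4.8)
The plan is to treat the two ``$=\infty$'' assertions together through a single loop-vertex obstruction, and then to treat the exact value $1$ by a reduction to Petersen's $2$-factor theorem. The decisive feature of $a=2$ with $r$ odd is that $r$ and $r+2$ are odd while $r+1$ is the unique even value in $\{r,r+1,r+2\}$. Call a vertex \emph{loop-bound} if every edge incident with it is a loop. In any $(r,r+2)$-factor $F$ the degree at a loop-bound vertex $v$ is even (loops contribute $2$), so $\deg_F(v)=r+1$; hence in any $(r,r+2)$-factorization of $G$ into $x$ factors one must have $x(r+1)=\deg_G(v)$. Thus a loop-bound vertex pins the number of factors to the single value $\deg_G(v)/(r+1)$, and it forces $(r+1)\mid \deg_G(v)$ for any factorization to exist at all.

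For $t>1$ (any $s\ge 0$) I would take $G$ to be a single vertex carrying $k$ loops, which is a $2k$-regular, hence $(d,d+s)$-, pseudograph whenever $2k\in[d,d+s]$. By the observation above it admits factorizations for \emph{at most one} value of $x$, hence for fewer than $t$ values once $t\ge 2$. Since $2k$ may be taken arbitrarily large (using even $d$ when $s=0$), no finite threshold can exist and $\pi(r,s,2,t)=\infty$. For $s>1$ I would instead take $G$ to be the disjoint union of two loop-bound vertices of \emph{distinct} even degrees $2k_1\ne 2k_2$, both lying in $[d,d+s]$; since the factors of a disjoint union restrict to factors of each component with the same count $x$, the two vertices would simultaneously demand $x=2k_1/(r+1)$ and $x=2k_2/(r+1)$, which is impossible, so $G$ has no $(r,r+2)$-factorization whatsoever. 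Two distinct even values are available in $[d,d+s]$ whenever $s\ge 3$, and for $s=2$ whenever $d$ is even (giving $d$ and $d+2$); either way such $G$ exist for arbitrarily large $d$, so again $\pi(r,s,2,t)=\infty$.

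For $r=1$, $s\in\{0,1\}$, $t=1$, the lower bound $\pi\ge 1$ is immediate: a single isolated vertex is a $(0,s)$-pseudograph with no $(1,3)$-factor, so the threshold cannot be $0$. For the upper bound I would show that every pseudograph $G$ with $d\ge 1$ and all degrees in $\{d,d+1\}$ has a $(1,3)$-factorization. The reduction is to peel off one factor and $2$-factorize the rest: choose the even integer $2m$ to be $d-1$ if $d$ is odd and $d-2$ if $d$ is even, seek a spanning $2m$-regular subgraph $H\subseteq G$, and set $F=G-H$. Then $\deg_F(v)=\deg_G(v)-2m\in\{1,2\}$ or $\{2,3\}$, so $F$ is a $(1,3)$-factor; and $H$, being even-regular, decomposes into $m$ edge-disjoint $2$-factors by Petersen's theorem, each a $(1,3)$-factor. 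Concatenating gives a $(1,3)$-factorization into $m+1$ factors, and one checks this count equals the value forced by any loop-bound vertex, so there is no inconsistency.

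The crux, and the step I expect to be the main obstacle, is the existence of the even-regular factor $H$, equivalently of the $f$-factor $F$ with $f(v)=\deg_G(v)-2m\in\{1,2,3\}$, in the presence of loops. When $d$ is even this is unproblematic ($H=G$ in the regular case); but when $d$ is odd and $G$ is regular a perfect matching need not exist, so one cannot simply take $F$ to be a $1$-factor. Since in that case there are no loop-bound vertices to fix $x$, one has room to instead extract a $(d-3)$-regular $H$ (so that $F$ is cubic), or, more robustly, to replace the ``peel and $2$-factorize'' step by a direct balanced edge-colouring: invoke an equitable edge-colouring theorem for pseudographs, colouring $E(G)$ with $x$ colours (with $\lceil (d+1)/3\rceil\le x\le d$, and $x$ forced to the loop value where loop-bound vertices occur) so that each colour class has degree in $\{1,2,3\}$ at every vertex. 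The delicate point throughout is that a loop cannot be split between two classes and contributes $2$ at once; one therefore distributes the loops first (at most one per class at each vertex, which the degree bounds permit) and then balances the loopless remainder, verifying by a Hall- or flow-type argument that the residual degree budget can always be met. Making this loop-aware balancing clean is where the real work lies, and it is exactly the sort of ``complicated deduction'' flagged for the sequel.
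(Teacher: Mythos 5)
You should know at the outset that this paper contains no proof of Theorem \ref{T5} to compare against: the theorem is stated with the remark that the case $a=2$, $r$ odd is one the authors ``hope to prove in a sequel to \cite{HiltonRaj1}''. So your proposal has to be judged on its own merits, and it splits cleanly into a part that works and a part that does not.

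The $\infty$ cases are correct and complete. Your loop-parity observation --- at a vertex whose incident edges are all loops, any $(r,r+2)$-factor has even degree, and $r+1$ is the only even value in $\{r,r+1,r+2\}$ when $r$ is odd, so a factorization into $x$ factors forces $x(r+1)=\deg_G(v)$ --- is exactly the right obstruction. The single loop-vertex of degree $2k\in[d,d+s]$ kills $t\ge 2$ (at most one admissible $x$), and the disjoint union of two loop-vertices of distinct even degrees in $[d,d+s]$ kills $s\ge 2$ (no admissible $x$ at all); both exist for arbitrarily large $d$ (restricting to even $d$ where necessary), which is exactly what $\pi=\infty$ means in this paper. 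This is in the same spirit as the loop obstructions behind Theorem \ref{T4}, and it fully disposes of the first line of the theorem --- indeed it goes beyond what this paper itself establishes.

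The second line, $\pi(1,s,2,1)=1$, is where your proposal has a genuine gap, and it is the one you flag yourself. The lower bound (isolated vertex) is fine, but the upper bound --- that every $(d,d+s)$-pseudograph with $d\ge 1$, $s\in\{0,1\}$ has at least one $(1,3)$-factorization --- is not proved. Your main route requires a spanning $2m$-regular subgraph $H\subseteq G$; when $G$ is $d$-regular with $d$ odd and $2m=d-1$, this is precisely the complement of a perfect matching, and odd-regular graphs need not have perfect matchings (for $d=3$ you can take $F=G$ itself, but for $d=5,7,\dots$ nothing you cite guarantees the $1$-factor, $2$-factor, or $(d-3)$-regular spanning subgraph you would need). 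Your fallback, an equitable edge-colouring theorem for pseudographs, does not exist off the shelf: Theorem \ref{T17} is for bipartite multigraphs, Theorem \ref{T18} is for simple graphs, and deviation-$1$ equitability already fails for loopless non-bipartite multigraphs (a triangle with two colours), while the deviation-$2$ results that do generalize are too weak here, since the target window $[1,3]$ has width exactly $2$ about the average and an integer class-degree pattern such as $\{0,1,2\}$ or $\{2,3,4\}$ is then not excluded. So the ``loop-aware balancing'' you defer is not a routine verification; it is the entire content of the remaining case --- presumably the very material the authors postponed to their sequel. As it stands, the exact value $\pi(1,s,2,1)=1$ remains unproven in your proposal.
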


For the cases when $a \ge 2$ not covered by Theorem \ref{T4} we have:

\begin{theorem} Let $r, s, a$ and $t$ be integers with $r$ and $t$ positive, $a \ge 2$ and $s$ non-negative.

\begin{itemize}
\item[{\rm 1.}] If $r$ and $a$ are both even, then
$$ \pi(r, s, a, t) = N(r, s, a, t)\,.$$
\item[{\rm 2.}] If $r$ and $a$ are both odd, then
$$\pi(r, s, a, t) = \left\{\begin{array}{l} N(r+1, s, a-1, t) - 1  \\ \hspace*{3.5cm} {\rm if}\;\ (r+1)t+s \not\equiv 2\;({\rm mod}\,a-1)\,,  \vspace{0.25cm}\\
N(r+1, s, a-1, t) - (r + 1) - 1 \\ \hspace*{3.5cm} {\rm if} \;\;(r+1) t+s \equiv 2 \; ({\rm mod}\,a-1)\,. \end{array} \right.$$

\item[{\rm 3.}] If $r$ is odd and $a$ is even, then
$$\pi(r, s, a, t) = \left\{\begin{array}{l} N(r+1, s, a-2, t) - 1 \\ \hspace*{3.5cm} {\rm if}\;\ (r+1)t+s \not\equiv 2, 3\;({\rm mod}\,a-2)\,,  \vspace{0.25cm}\\
N(r+1, s, a-2, t) - (r + 1) - 1 \\ \hspace*{3.5cm} {\rm if} \;\;(r+1) t+s \equiv 2, 3 \; ({\rm mod}\,a-2)\,. \end{array} \right.$$

\item[{\rm 4.}] If $r$ is even and $a$ is odd, then
$$\pi(r, s, a, t) = \left\{\begin{array}{ll} N(r,s,a-1,t) & {\rm if}\;\ rt+s \not\equiv 2\;({\rm mod}\,a-1)\,, \\
N(r, s, a-1, t) - r & {\rm if} \;\;rt+s \equiv 2 \; ({\rm mod}\,a-1)\,. \end{array} \right.$$

\end{itemize}

\label{T6}\end{theorem}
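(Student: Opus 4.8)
The plan is to prove each equality in both directions: an \emph{upper bound} showing that once $d$ reaches the stated value every $(d,d+s)$-pseudograph admits the required factorizations, and a \emph{lower bound} exhibiting, at $d$ one smaller, an extremal pseudograph for which the number of admissible $x$ falls below $t$. Throughout I would use the reformulation that an $(r,r+a)$-factorization of $G$ into $x$ factors is exactly an edge-colouring of $G$ with $x$ colours in which every colour class is a spanning subgraph whose vertex-degrees all lie in $\{r,\ldots,r+a\}$ (a loop contributing $2$). In this language the engine is the balanced edge-colouring machinery behind Theorem~\ref{T1} together with the amalgamation--detachment technique of Nash-Williams and Hilton, which turns a balanced colouring of a suitable amalgamation of $G$ into a genuine factorization of $G$ with controlled degrees.

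First I would isolate the admissible range of $x$ by a counting argument. Summing the $x$ colour-degrees at a vertex $v$ gives $xr \le d(v) \le x(r+a)$, where $d(v)$ is the degree of $v$, so for the conclusion to hold for \emph{every} $(d,d+s)$-pseudograph one needs $xr \le d$ and $x(r+a) \ge d+s$, that is
$$\left\lceil \frac{d+s}{r+a}\right\rceil \;\le\; x \;\le\; \left\lfloor \frac{d}{r}\right\rfloor .$$
These admissible values are consecutive integers, so the requirement of ``at least $t$ values of $x$'' reads $\lfloor d/r\rfloor - \lceil (d+s)/(r+a)\rceil + 1 \ge t$; solving for the least admissible $d$ produces the quantity $N$. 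When $r$ and $a$ are both even these necessary conditions are also sufficient, so Case~1 is precisely Theorem~\ref{T3} and needs no further argument.

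For the remaining cases I would round the odd endpoints of the degree interval $\{r,\ldots,r+a\}$ inward to the nearest even integers, thereby reducing to the even--even case. If $r$ is odd it is raised to $r+1$, and if $r+a$ is odd it is lowered to $r+a-1$; this is why $N(r+1,s,a-1,t)$ appears in Case~2 (only $r$ odd), $N(r+1,s,a-2,t)$ in Case~3 (both $r$ and $r+a$ odd), and $N(r,s,a-1,t)$ in Case~4 (only $r+a$ odd). The link between the original and the rounded problem is a parity argument on colour classes: since the number of odd-degree vertices of any graph is even, the handshake lemma constrains how the slack at an odd endpoint of $\{r,\ldots,r+a\}$ can actually be realised, and one passes between the two problems by adjoining or deleting a spanning subgraph of $1$-factor type and then re-applying the detachment theorem. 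This is also the origin of the strict inequality $\frac{d+s}{r+a} < x$ announced in the abstract whenever $r+a$ is odd.

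The main obstacle, and the source of the congruence-dependent corrections, is controlling exactly how far the threshold moves away from the even--even value. The additive terms $0$, $-1$, $-r$ and $-(r+1)-1$ measure the extra flexibility created at a rounded parity boundary, and whether this flexibility buys an additional admissible value of $x$ depends on the residue of $(r+1)t+s$ or $rt+s$ modulo $a-1$ or $a-2$; pinning this down is a delicate manipulation of the floor and ceiling functions. Equally delicate is the lower bound: for each case, and in each congruence regime, I would build at $d=\pi-1$ a near-regular pseudograph whose loops and edge-multiplicities are tuned so that the parity obstruction forces the admissible range to contain only $t-1$ integers, certifying that the threshold cannot be smaller. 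Verifying that these extremal pseudographs behave as claimed, case by case, is the part I expect to be hardest.
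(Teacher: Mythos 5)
There is no proof in this paper to compare against: Theorem~\ref{T6} is quoted as background, with cases 1, 2 and 4 cited from \cite{HiltonRaj1} (case 1 being Theorem~\ref{T3}), and the paper explicitly records that the proof of case 3 in \cite{HiltonRaj1} was \emph{wrong} --- the subcase $(r+1)t+s \equiv 3 \;({\rm mod}\,a-2)$ was overlooked --- and that the corrected argument is complicated and deferred to a sequel. Judged on its own, your proposal has the right silhouette (upper bound plus extremal lower bound; loop parity as the source of the odd-endpoint phenomena; the bipartite/amalgamation machinery behind Theorem~\ref{T1} as the engine), but it contains two genuine gaps. The first is that your central reduction is unsound as stated. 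Rounding the interval $\{r,\ldots,r+a\}$ inward to even endpoints is forced only at vertices \emph{all} of whose incident edges are loops; at any other vertex a colour class may perfectly well have odd degree there. So the pseudograph problem is not equivalent to the even--even problem with rounded parameters, and the theorem itself tells you so: except in the first subcase of case 4, its values differ from $N$ of the rounded parameters by the corrections $-1$, $-r$, $-(r+1)-1$, governed by congruences modulo $a-1$ or $a-2$. An argument that genuinely ``reduced to the even--even case'' would therefore prove a wrong formula; the corrections are not a refinement of your reduction, they are a refutation of it. (The route described in Subsection~\ref{subsec:analogous} is a rather complicated deduction from the bipartite Theorem~\ref{T1}, not from case 1.)

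The second gap is that everything constituting the actual content of the theorem is deferred. You do not derive any of the congruence conditions, and you do not construct a single extremal pseudograph: you only promise ones ``whose loops and edge-multiplicities are tuned,'' and you yourself flag this as ``the part I expect to be hardest.'' For pseudographs the extremal examples are built around loop-heavy vertices, where factor degrees are forced to be even, and showing that the number of admissible $x$ drops to $t-1$ at exactly $d = \pi(r,s,a,t)-1$ in each congruence regime is where all the work lies. That this is not routine is demonstrated by case 3: the published argument failed precisely because the residue $3 \;({\rm mod}\,a-2)$ behaves differently from other residues, and your sketch gives no hint of why residues $2$ and $3$ (and not just $2$) must be singled out there, nor why case 2 needs only residue $2 \;({\rm mod}\,a-1)$. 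A smaller slip: the strict inequality $\frac{d+s}{r+a}<x$ in the abstract concerns \emph{simple} graphs (Theorem~\ref{T12}), where it arises from odd-order components such as $K_{d+1}$, not from pseudograph loop parity, so it cannot be claimed as a corollary of your parity mechanism. As it stands, the proposal is a plausible plan, not a proof.
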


Cases (1), (2) and (4) are proved in \cite{HiltonRaj1}. The proof and result in Case (3) in \cite{HiltonRaj1} was wrong, since the special result when $(r+1) t+s \not\equiv 3$ $({\rm mod}\, a-2)$ was not noticed. The argument in this overlooked case is quite complicated, and it is hoped to publish it elsewhere as a sequel to \cite{HiltonRaj1}.

In the main the values of $\mu(r, s, a, t)$ are not known, and it certainly looks at present that they will be harder to determine than the results for simple graphs or pseudographs.  However, we do have some results.

In Theorem \ref{T3} we gave the evaluation when $r$ and $a$ are  both even; specifically:

\noindent ``If $r$ and $a$ are both even, $s \ge 0$, $t \ge 1$,  then 
$$\mu(r, s, a,t) = N(r, s, a, t)\,."$$

In the case when $s = 0$ and $t = 1$ we have \cite{FerencakHilton10}, \cite{FerencakHilton11}:

\begin{theorem} Let $r \ge 1$.  Then
$$\mu(r, 0, 1, 1) = r^2 + 1 \quad {\rm if} \;\; r \; {\rm is \;odd},$$
$$ \frac{3}{2} r^2 - 2r - 1 \le \mu(r, 0, 1, 1) \le \frac{3}{2}r^2 + 3r +1 \quad {\rm if} \;\; r  \;{\rm is \;even}.$$
\label{T7}\end{theorem}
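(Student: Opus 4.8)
The plan is to prove both the exact odd-$r$ value and the even-$r$ bounds by separating, in each case, a \emph{lower-bound construction} (a specific $d$-regular multigraph admitting no $(r,r+1)$-factorization, forcing $\mu(r,0,1,1)$ to be large) from an \emph{upper-bound existence argument} (every $d$-regular multigraph with $d$ above the stated threshold is $(r,r+1)$-factorable). The starting observation, valid for all $r$, is a degree count: in any $(r,r+1)$-factorization of a $d$-regular multigraph into $x$ factors the degrees at each vertex sum to $d$ with each summand in $\{r,r+1\}$, so $rx \le d \le (r+1)x$, i.e. $x \in [\,d/(r+1),\, d/r\,]$. At the critical degree $d=r^2$ this interval lies in $(r-1,r]$, so $x=r$ is forced and then every summand must equal $r$; hence at $d=r^2$ an $(r,r+1)$-factorization is exactly an $r$-factorization into $r$ copies of an $r$-factor. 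This reduction is the hinge of the whole argument and explains the split by parity of $r$.

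For odd $r$ I would first settle the lower bound $\mu(r,0,1,1)\ge r^2+1$ by exhibiting an $r^2$-regular multigraph with no $r$-factorization. Take two near-$r^2$-regular multigraphs $A$ and $B$, each on an \emph{odd} number of vertices and each having a single deficient vertex ($a\in A$, $b\in B$) of degree $r^2-1$, and join $a$ to $b$ by one edge; the result $G$ is $r^2$-regular. Because $|V(A)|$ is odd and $r$ is odd, a parity count across the bridge $ab$ shows that \emph{every} $r$-factor of $G$ must contain the bridge; since the bridge is a single edge it can lie in only one factor, so $r\ge 2$ edge-disjoint $r$-factors cannot coexist and $G$ has no $r$-factorization. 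For the matching upper bound I would show every $d$-regular multigraph with $d\ge r^2+1$ is $(r,r+1)$-factorable: the interval for $x$ now admits a value with genuine slack (some summand may equal $r+1$), and I would realize the factorization by a detachment/amalgamation argument, in the spirit of the bipartite template of Theorem~\ref{T1}, peeling off $(r,r+1)$-factors one at a time. The point is that the extra degree $r+1$ breaks exactly the parity constraint that obstructed the bridge construction, so no analogue of the lower-bound obstruction survives.

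For even $r$ the bridge construction collapses: with $r$ even the quantity $r|V(A)|$ is always even, so no factor is forced to use the bridge and the parity obstruction disappears; moreover Petersen's theorem gives a $2$-factorization of any even-regular multigraph, and grouping $2$-factors produces an $r$-factorization whenever the degree count permits. Consequently the binding obstruction is no longer parity but \emph{edge multiplicity}, and the relevant extremal objects are Shannon-type (fat-triangle) multigraphs. Analysing a fat triangle with each edge of multiplicity $d/2$, the evenness of $r$ forces each factor to have degree-sum $3r$ or $3r+2$ on the three vertices, which tightens the admissible range of $x$ to roughly $[\,3d/(3r+2),\,d/r\,]$; choosing $d\approx \frac{3}{2} r^2$ so that this shrunken interval (refined by the per-vertex and integrality constraints) contains no admissible value yields a non-factorable example and the lower bound $\mu(r,0,1,1)\ge \frac{3}{2} r^2-2r-1$. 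For the upper bound I would invoke Shannon's theorem $\chi'(G)\le\lfloor\frac{3}{2}\Delta\rfloor$, which is precisely why the leading constant is $\frac{3}{2}$: once $d\ge \frac{3}{2} r^2+3r+1$ a sufficiently balanced edge-colouring exists and can be regrouped into $(r,r+1)$-factors.

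The main obstacle is the even case. Unlike the parity obstruction, which is matched exactly by the slack argument and pins down $\mu(r,0,1,1)=r^2+1$ on the nose, the multiplicity obstruction leaves a genuine gap: the extremal fat-triangle-type construction and the Shannon-based existence argument differ in their lower-order terms (the $-2r-1$ versus $+3r+1$), and I do not expect to close this $\Theta(r)$ gap with these tools. Carrying the construction out precisely is the delicate, calculation-heavy core, since the symmetric fat triangle is not itself non-factorable at the claimed degree for every even $r$ (one must perturb the multiplicities or adjoin pendant structure while preserving regularity) and then certify non-factorability through the combined per-vertex and edge-count constraints; matching it from above against Shannon's bound, with all floors and parities tracked, is where the real work lies and is the reason only bounds, rather than an exact value, are claimed when $r$ is even.
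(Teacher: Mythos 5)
First, a point of comparison that matters for this review: the paper does not prove Theorem~\ref{T7} at all --- it is quoted from \cite{FerencakHilton10} and \cite{FerencakHilton11} as background, so there is no in-paper proof to measure your attempt against, and I can only judge the sketch on its own terms. Its architecture is sensible, and the odd-$r$ lower bound is essentially correct: the forcing argument at $d=r^2$ (the interval $[d/(r+1),\,d/r]$ pins $x=r$, whence every factor is an exact $r$-factor) combined with two odd-order blocks joined by a bridge is precisely the parity obstruction this paper itself deploys for simple graphs in Lemma~\ref{L14}(ii), and it transfers to multigraphs verbatim for odd $r\ge 3$. Your even-$r$ lower-bound sketch (Shannon-type fat triangles, edge-count versus per-vertex parity constraints, giving the admissible range roughly $[3d/(3r+2),\,d/r]$) also identifies the right extremal family, though, as you concede, the arithmetic pinning down $\tfrac32 r^2-2r-1$ is not carried out; note in particular that for even $r$ the endpoints of your interval are often integers, so the non-factorable degrees must be chosen with some care.

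The genuine gaps are both upper bounds, and they are not small. For odd $r$, the statement that every $d$-regular multigraph with $d\ge r^2+1$ is $(r,r+1)$-factorable is the hard content of the Ferencak--Hilton theorem, and ``peeling off $(r,r+1)$-factors one at a time in the spirit of the bipartite template'' does not deliver it: after one factor is removed the graph is no longer regular, so there is no invariant for the induction to run on, and the bipartite threshold $N(r,0,1,1)=r^2-r$ of Theorem~\ref{T1} is strictly smaller than $r^2+1$, so the multigraph bound cannot be a formal consequence of the bipartite one --- the odd-order-component parity obstructions that make the multigraph threshold larger must be confronted directly, and your sketch never does so. For even $r$, the Shannon-based argument fails as described: a proper edge-colouring with $\lfloor 3d/2\rfloor$ colours has matchings as colour classes, so grouping $k$ of them bounds degrees above by $k$ but below only by $k-\bigl(\lfloor 3d/2\rfloor-d\bigr)$, which is nowhere near $r$ when $k\in\{r,r+1\}$; no regrouping of a Shannon colouring yields $(r,r+1)$-factors. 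What is actually needed is a balanced (equitable-type) edge-colouring theorem valid for multigraphs, where the per-vertex deviation is $2$ rather than the $1$ available for bipartite or simple graphs, and it is exactly that extra deviation that produces a threshold of order $\tfrac32 r^2$ and the $\Theta(r)$ gap between the stated bounds. So: right skeleton and a correct odd-$r$ lower bound, but both upper bounds rest on steps that would collapse if written out.
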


\subsection{Known results for simple graphs}\label{subsec:known}
The first noteworthy result was due to Era in 1984 \cite{Era84} and Egawa in 1986 \cite{Egawa86}.

\begin{theorem} For integers $r \ge 3$,
$$\sigma(r, 0, 1, 1) = \left\{\begin{array}{cl} r^2 & {\rm if} \;\; r \;\; {\rm is \;even}, \\ 
r^2 + 1 & {\rm if} \;\; r\;\;  {\rm is\; odd}.\end{array}\right.$$

\label{T8}\end{theorem}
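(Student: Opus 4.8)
The plan is to separate the easy arithmetic from the two genuine tasks, a lower-bound construction and an upper-bound factorization, and to reduce both to questions about ordinary $k$-factors. The starting point is the counting constraint: if a $d$-regular simple graph $G$ decomposes into $x$ edge-disjoint $(r,r+1)$-factors, then summing the factor-degrees at any vertex gives $xr \le d \le x(r+1)$, so $\frac{d}{r+1}\le x\le \frac{d}{r}$. I would evaluate this at the critical degrees. For odd $r$ and $d=r^2$ the only admissible value is $x=r$, and since $d=r\cdot r$ forces every factor to meet every vertex in exactly $r$ edges, each factor must be $r$-regular; hence $G$ is $(r,r+1)$-factorable only if it possesses an $r$-factor. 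For even $r$ and $d=r^2-1$ the only admissible value is $x=r-1$, and $d=(r-1)(r+1)$ forces every factor to be $(r+1)$-regular, so factorability requires an $(r+1)$-factor. In both critical cases the forced factor is regular of \emph{odd} degree ($r$ odd, or $r+1$ odd), and this is the parity phenomenon that drives the whole theorem.

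For the lower bound it therefore suffices to construct a $d$-regular simple graph with no $k$-factor, where $(d,k)=(r^2,r)$ for odd $r$ and $(d,k)=(r^2-1,\,r+1)$ for even $r$. I would obtain such a graph from Tutte's $f$-factor theorem (with $f\equiv k$): the aim is to exhibit disjoint sets $S,T$ at which the Tutte deficiency is negative, engineered by attaching several gadget-components of odd order to a small separator $S$, so that after deleting $S$ the number of odd components exceeds the crossing capacity $k\lvert S\rvert+\sum_{v\in T}(d_{G-S}(v)-k)$. Each odd component absorbs at least one crossing edge of any $k$-factor (since $k$ is odd), and balancing this demand against the degree bound is what pins the construction to the exact degree $d=r^2$ (resp. $r^2-1$). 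The delicate point is that high regularity normally forces factors to exist, so matching the threshold precisely, rather than merely producing some non-factorable graph, is the real content here.

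For the upper bound I would show factorability for every $d\ge r^2$ (even $r$) and every $d\ge r^2+1$ (odd $r$) by building the factors. The conceptual reason the threshold lands where it does is exactly the parity picture above: the largest degree at which the forced factorization demands an odd-regular factor is $d=r^2-1$ for even $r$ (forcing $(r+1)$-regular, odd) and $d=r^2$ for odd $r$ (forcing $r$-regular, odd); one step higher the obstruction disappears. Concretely, for even $r$ at $d=r^2$ the forced $r$-factorization exists because $r$ is even: $G$ is even-regular, so Petersen's theorem gives a $2$-factorization, and bundling the $2$-factors in groups of $r/2$ yields $r$ many $r$-regular factors. For odd $r$ at $d=r^2+1$ the value $x=r$ is still forced, but now $r^2+1=r\cdot r+1$ means each vertex has genuine slack: exactly one factor may take degree $r+1$, so the factors need not be odd-regular and the parity obstruction vanishes. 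In general I would handle an arbitrary admissible $d$ by repeatedly extracting a single $(r,r+1)$-factor using a degree-constrained factor theorem (the Lovász $(g,f)$-factor theorem), inducting on the number of remaining factors while tracking the degree range of the residual graph to keep it inside the regime where the next factor is guaranteed.

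The hard part will be the upper bound at the exact threshold. One must verify that the factor-extraction can be carried through for \emph{every} $d\ge r^2$ (resp. $r^2+1$), keeping the residual degrees in a window narrow enough that the next $(r,r+1)$-factor is forced, and this is precisely the point at which the relevant $(g,f)$-deficiency first becomes non-negative for all admissible $S,T$. Matching this boundary against the lower-bound constructions, where the same deficiency is made negative one degree lower, is the crux of the theorem and is exactly what produces the split $r^2$ for even $r$ and $r^2+1$ for odd $r$. (The restriction $r\ge 3$ is used to exclude the degenerate small cases, where $G$ can be a single factor.)
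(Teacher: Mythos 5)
First, a point of comparison: the paper does not prove this theorem at all --- it is quoted as a known result of Era and Egawa (and it is also the special case $s=0$, $a=1$, $t=1$ of Theorem 9, quoted from earlier work, and of the paper's own Theorems 11--12). So your proposal can only be measured against the machinery the paper would use, namely equitable edge-colourings (Theorem 18 and Lemma 19) plus the boundary analysis behind Theorem 12(iii)--(iv). Your preliminary analysis is correct and matches the paper's: the counting bound $d/(r+1)\le x\le d/r$ is Lemma 13, and at the critical degrees ($d=r^2$ with $r$ odd, $d=r^2-1$ with $r$ even) the unique admissible $x$ forces every factor to be regular of odd degree. Your lower-bound plan is workable but heavier than necessary: you do not need a $d$-regular graph with \emph{no} odd-regular factor; it suffices that no \emph{factorization} into $x\ge 2$ edge-disjoint such factors exists, and the paper's Lemma 14 construction (two copies of a near-complete graph joined by a bridge, so that every odd-regular factor must contain the bridge, while $x\ge 2$ factors are edge-disjoint) achieves this with exactly the parity argument you describe, no Tutte-deficiency engineering required. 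This is also where $r\ge 3$ enters, guaranteeing $x\ge 2$.

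The genuine gap is in your upper bound. Extracting $(r,r+1)$-factors one at a time via the Lov\'asz $(g,f)$-factor theorem does not go through. To keep the residual inside the admissible window when $j$ factors remain you must impose $g(v)=\max\bigl(r,\,d_i(v)-(j-1)(r+1)\bigr)$ and $f(v)=\min\bigl(r+1,\,d_i(v)-(j-1)r\bigr)$, and at every vertex whose current degree sits at an endpoint of its window this degenerates to $g(v)=f(v)$ with an \emph{odd} value --- precisely the parity obstruction you set out to avoid, and precisely where the usable sufficient condition (a $\theta$ with $g(v)\le\theta d(v)\le f(v)$ and $g<f$) fails. In the boundary case $r\mid d$, $x=d/r$, the very first extraction already demands an odd-regular factor of a regular graph, which is what can fail; your Petersen bundling covers that case for even $r$, but bundling is a simultaneous construction, not an extraction step, and it says nothing about the many degrees $d\ge r^2$ with $r\nmid d$. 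The missing idea is to build all $x$ factors at once: for any admissible $x$ strictly inside $\bigl(d/(r+1),\,d/r\bigr)$, the Hilton--de Werra equitable edge-colouring theorem (with the pendant-edge trick at vertices whose degree is divisible by $x$, as in the paper's Lemma 19) produces an $(r,r+1)$-factorization immediately; at the good-parity boundary ($x=d/r$ with $r$ even, or $x=d/(r+1)$ with $r$ odd, forcing even-regular factors) Petersen bundling finishes. What remains is then pure arithmetic: show that for every $d\ge r^2$ ($r$ even), respectively $d\ge r^2+1$ ($r$ odd), the interval $[d/(r+1),\,d/r]$ contains either a strictly interior integer or an integer at the good-parity end; this short calculation is exactly what produces the thresholds $r^2$ and $r^2+1$. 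As written, your plan defers the crux ("verify the deficiency is non-negative") rather than resolving it, and the iterated-extraction route it proposes is the one that the known proofs deliberately avoid.
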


In 2009, extending this and other work by Hilton and Wojciechowski \cite{HiltonWojc05}, Hilton \cite{Hilton09} evaluated $\sigma(r, s, a, t)$ in the special case when $\alpha = 1$.

\begin{theorem} Let $r, s,$ and $t$ be integers with $r$ and $t$ positive and $s$ non-negative.  Then
$$\sigma(r, s, 1, t) = \left\{\begin{array}{ll} tr^2 + tr + sr - r & {\rm if} \;\; r \;\; {\rm is \; even \; and \;} s \in \{0,1\}, \\
tr^2 + tr + sr - r + 1 & {\rm if} \;\; r \;\; {\rm is \;odd \; and \;} s \in \{0, 1\}, \\
tr^2 + tr + sr + 1 & {\rm if} \;\; s \ge 2\,. \end{array}\right.$$
\label{T9}
\end{theorem}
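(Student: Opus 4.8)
The plan is to prove the equality in two halves: a \emph{sufficiency} statement (if $d \ge V$, where $V$ denotes the claimed value of $\sigma(r,s,1,t)$, then every $(d,d+s)$-simple graph $G$ admits $(r,r+1)$-factorizations for at least $t$ distinct numbers of factors) and a \emph{sharpness} statement (some $(d,d+s)$-simple graph with $d=V-1$ fails this). I would begin by recording the elementary necessary condition: if $G$ decomposes into exactly $x$ $(r,r+1)$-factors, then summing factor-degrees at a vertex $v$ gives $xr \le d_G(v) \le x(r+1)$, so a single $x$ can work for a $(d,d+s)$-graph only when $\lceil (d+s)/(r+1)\rceil \le x \le \lfloor d/r\rfloor$. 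The number of admissible $x$ is at least $t$ precisely once $d$ passes a threshold of order $tr(r+1)+sr$, which already explains the leading terms $tr^2+tr+sr$ of the answer; the remaining additive constants must come from a genuine existence obstruction for simple graphs.

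For sharpness I would combine two families of extremal graphs. A near-regular simple graph actually realizing both extreme degrees $d$ and $d+s$ shrinks the feasible interval for $x$ to the shortest length the counting permits, and so exhibits fewer than $t$ admissible values of $x$ in the counting-dominated regime at $d=V-1$. This alone does not reach $V-1$, because the true threshold sits about $r^2$ above the counting bound; to capture that gap I would adapt the Era--Egawa extremal graphs underlying Theorem~\ref{T8}, which are regular simple graphs of degree just below $r^2$ (respectively $r^2+1$) possessing no $(r,r+1)$-factorization at all. The extra $+1$ for odd $r$ reflects the parity constraint that an $r$-regular simple subgraph needs $r\,|V|$ even. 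I would splice or blow up these components and pad with edge-disjoint regular (bipartite) graphs to tune the degree to $V-1$ and the spread to $s$ while preserving the obstruction; the book-keeping naturally splits into $r$ even/odd and $s\in\{0,1\}$ versus $s\ge 2$, matching the three cases of the statement.

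The sufficiency direction is the hard one. Fix $d \ge V$ and a target $x$ in the feasible interval; I want an $(r,r+1)$-factorization with exactly $x$ factors, equivalently an edge-$x$-colouring of $G$ in which every colour class is an $[r,r+1]$-factor. Assigning each vertex a target number $k_v = d_G(v)-xr \in \{0,\dots,x\}$ of ``high'' (degree $r+1$) classes recasts this as a balanced edge-colouring problem. In the bipartite template behind Theorem~\ref{T1}, de Werra-type balanced colourings always exist and finish the argument immediately; for simple graphs they need not exist unless the minimum degree is large, which is exactly the Era--Egawa phenomenon quantified by Theorem~\ref{T8}. I would realize the colouring through an amalgamation--detachment argument: view the factorization as a detachment of a small amalgamated graph on $x$ vertices and invoke a Nash-Williams/Hilton detachment theorem, using the hypothesis $d \ge V$ (which is of order $r^2$ above the counting bound) precisely to force the detachment to be \emph{simple}, i.e.\ to keep all edge-multiplicities at most $1$.

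The main obstacle is this simple-graph realizability: producing a genuinely simple factorization rather than the multigraph factorizations that are unobstructed in the bipartite and pseudograph settings (Theorems~\ref{T1} and~\ref{T6}). One must rule out the local configurations that force a multiple edge while simultaneously meeting the odd-$r$ parity condition and the tight degree-balancing forced when $s \le 1$; these are the sources of the delicate constants $-r$, $-r+1$ and $+1$, and getting them exactly (not merely to leading order) is where the work lies. I expect to control all three regimes uniformly by a careful induction that peels off one suitably chosen $(r,r+1)$-factor at a time while bounding the growth of the degree spread, with Theorem~\ref{T8} serving as the engine for the regular subproblems that arise.
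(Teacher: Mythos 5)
First, a point of orientation: this paper never proves Theorem~\ref{T9} at all — it is quoted as a known result from \cite{Hilton09} — so there is no in-paper proof to compare yours against; the fair comparison is with the techniques that paper and the present one actually use. Your skeleton (counting necessary condition, sufficiency above the threshold, extremal graphs at $V-1$) is right, and your reduction of the factorization to a balanced edge-colouring with targets $k_v = d_G(v)-xr$ is exactly the correct viewpoint. But the step you propose to realize it — amalgamation--detachment — does not work for this problem. A detachment theorem (Nash-Williams, Hilton) produces \emph{some} graph realizing the prescribed degree and multiplicity data of an amalgamated graph, not the particular graph $G$ you started from; after amalgamating $G$ and detaching, you have factorized some graph with the same amalgamation, which need not be $G$. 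That machinery succeeds for highly symmetric targets (complete and complete multipartite graphs), never for factorizing an arbitrary given simple graph. The tool that actually does this job, both in \cite{Hilton09} and throughout the present paper, is the Hilton--de Werra equitable edge-colouring theorem (Theorem~\ref{T18}) applied via the pendant-edge trick (Lemma~\ref{L19}); it settles every $x$ with $\frac{d+s}{r+1} < x < \frac{d}{r}$, and the entire remaining difficulty — the source of the constants $-r$, $-r+1$, $+1$ — is the endpoint values $x = d/r$ and $x=(d+s)/(r+1)$. For these your proposal offers only the placeholder of ``peeling off one factor at a time,'' and that induction does not close: removing an $(r,r+1)$-factor from a $(d,d+s)$-graph leaves a $(d-r-1,\,d+s-r)$-graph, so the spread grows at each step unless the factor has exact degree $r$ or $r+1$ at precisely the right vertices, which is the very problem being solved.

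The sharpness half also has a genuine gap. Adding edge-disjoint regular graphs to an Era--Egawa extremal graph to ``tune the degree to $V-1$ while preserving the obstruction'' is not sound: adding edges to a non-factorizable graph in general destroys non-factorizability — indeed, above the threshold \emph{no} graph is non-factorizable, which is exactly why padding cannot preserve the obstruction as the degree rises. The genuine extremal examples are built directly at degree $V-1$, either by parity obstructions (components of odd order forcing a cut edge into every $r$-factor or $(r+1)$-factor, as in Lemma~\ref{L14}) or by bidegreed edge-counting contradictions (as in Theorems~\ref{T30} and~\ref{T31}), combined with divisibility choices of $d$ that make the feasible interval for $x$ contain fewer than $t$ integers. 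Finally, a quantitative slip worth fixing: the existence threshold exceeds the pure counting bound $N(r,s,1,t)$ by $\Theta(r)$, not by about $r^2$; for example, for $r$ even and $s=0$, $t=1$, the counting bound is $r^2-r$ while the true value is $r^2$.
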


For $r$ and $a$ even we have the following special case of Theorem \ref{T3}.

\begin{theorem} For $r, a \ge 2$, even, and $s \ge 0$, $t \ge 1$, 
$$ \sigma(r, s, a, t) = N(r, s, a,t)\,.$$
\label{T10}
\end{theorem}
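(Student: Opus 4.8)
The plan is to obtain the result by sandwiching $\sigma(r,s,a,t)$ between a lower bound and an upper bound that happen to coincide, both of which are already at our disposal, so that no argument specific to simple graphs is actually required. The lower bound comes straight from Lemma \ref{L2}: every bipartite simple graph is in particular a simple graph, so forcing $(r,r+a)$-factorability with $t$ distinct numbers of factors over the whole class of simple graphs is at least as demanding as forcing it over the subclass of bipartite simple graphs, whence $\beta_s(r,s,a,t) \le \sigma(r,s,a,t)$. Since Theorem \ref{T1} evaluates $\beta_s(r,s,a,t) = N(r,s,a,t)$, this already delivers $\sigma(r,s,a,t) \ge N(r,s,a,t)$ with no extra work.

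For the matching upper bound I would invoke the pseudograph estimate. Lemma \ref{L2} also supplies $\sigma(r,s,a,t) \le \pi(r,s,a,t)$, because a simple graph is a fortiori a pseudograph and so the pseudograph threshold dominates the simple-graph threshold. Here $r$ and $a$ are both even, so Theorem \ref{T6}(1) gives $\pi(r,s,a,t) = N(r,s,a,t)$. Combining the two bounds yields $N(r,s,a,t) \le \sigma(r,s,a,t) \le \pi(r,s,a,t) = N(r,s,a,t)$, which forces equality throughout and in particular $\sigma(r,s,a,t) = N(r,s,a,t)$, as required. Equivalently, one may simply read the statement off as the $\sigma$-component of Theorem \ref{T3}.

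The essential point, and the reason there is no genuine obstacle in the theorem as stated, is that for the even--even parities the bipartite lower bound and the pseudograph upper bound already pinch $\sigma$ to a single value. All of the real difficulty has been discharged elsewhere: in the construction of extremal bipartite simple graphs together with the existence of the required factorizations, which underlie the evaluation $\beta_s = N$ in Theorem \ref{T1}, and in the pseudograph factorization arguments underlying $\pi = N$ in Theorem \ref{T6}(1). It is the exact coincidence of these two independently established quantities at $N(r,s,a,t)$ that makes the even--even case so clean; the contrast with the other parities of $r$ and $a$, where $\pi$ and $\beta_s$ no longer agree, is precisely what forces the longer case analysis needed for the remaining values.
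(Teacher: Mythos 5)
Your proof is correct and is essentially the paper's own route to this statement: the paper presents Theorem~\ref{T10} as a special case of Theorem~\ref{T3}, remarking that it ``dropped out'' of the corresponding pseudograph result, and that deduction is exactly your sandwich $N(r,s,a,t)=\beta_s(r,s,a,t)\le\sigma(r,s,a,t)\le\pi(r,s,a,t)=N(r,s,a,t)$ obtained from Lemma~\ref{L2}, Theorem~\ref{T1} and Theorem~\ref{T6}(1). (The paper does also re-derive this value later by a more self-contained argument --- Lemma~\ref{L21} together with Theorem~\ref{T12}(i), counting the integers $x$ with $\frac{d+s}{r+a}\le x\le\frac{d}{r}$ --- but for Theorem~\ref{T10} itself it relies on precisely the pinching you describe.)
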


\noindent In other words
$$\sigma(r, s,a, t) = r \left\lceil \frac{rt+s-1}{a}\right\rceil + (t-1)r\,.$$

This result ``dropped out'' of the same result for pseudographs.  The general method we use for simple graphs is quite different.

\subsection{New results for simple graphs}\label{subsec:new}

Theorem 9 gives the evaluation when $a = 1$. For general positive integer values of $a$ we have the following result.

\begin{theorem} Let $ r \ge 1$, $s \ge 0$, $a \ge 1$ and $t \ge 1$ be integers.  Then 

\begin{itemize}
\item[{\rm (i)}] If $r$ is odd and $a$ is even, then
$$\sigma(r,s,a,t) = \left\{ \begin{array}{ll}r\left\lceil \displaystyle \frac{tr+s+1}{a}\right\rceil + (t-1) r +1 & {\rm if} \;\;\; t \ge 2 \;\;\; {\rm or\;\; if} \\
&  t=1 \; {\rm and} \; a < r+s+1\;, \vspace{0.5cm} \\
r & {\rm if} \; t=1 \; {\rm and} \;a \ge r+s+1\,.\end{array} \right.$$ \\

\item[{\rm(ii)}] If $r$ is even and $a$ is even , then
$$\sigma(r,s,a,t) = r \left\lceil \frac{tr+s-1}{a} \right\rceil + (t-1)r\,.$$

\item[{\rm(iii)}] If $r$ is even and $a$ is odd, then
$$\sigma(r,s,a,t) = r \left\lceil \frac{tr+s}{a}\right\rceil + (t-1)r\,.$$

\item[{\rm(iv)}] If $r$ and $a$ are both odd, then
$$\sigma(r,s,a,t) = \left\{\begin{array}{ll} r \left\lceil \displaystyle \frac{tr+s}{a} \right\rceil = (t-1) r+1 & {\rm if} \; t \ge 2 \;\; {\rm or \;\; if} \\
& t=1 \; \; {\rm and} \;\; a < r+s \,, \vspace{0.5cm} \\
r & {\rm if} \;\; t=1 \;\; {\rm and} \;\; a \ge r+s\,. \end{array} \right.$$

\end{itemize}
\label{T11}\end{theorem}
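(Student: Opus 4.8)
The plan is to deduce Theorem~\ref{T11} from a single structural characterisation of the admissible numbers of factors, and then to obtain the four formulae by elementary counting. The characterisation I would aim to prove is the one announced in the abstract, in its full generality: for any parity of $r$ and $a$, every $(d,d+s)$-simple graph $G$ has an $(r,r+a)$-factorisation into exactly $x$ factors if and only if $x$ is an integer lying in the interval with endpoints $\frac{d+s}{r+a}$ and $\frac{d}{r}$, where the left endpoint $\frac{d+s}{r+a}$ is admitted precisely when $r+a$ is even and the right endpoint $\frac{d}{r}$ is admitted precisely when $r$ is even. For $r$ and $a$ both even this is already contained in Theorem~\ref{T10}, so the genuine work lies in the three cases with at least one odd parameter, and the whole of Theorem~\ref{T11} then reduces to counting the integers in the relevant half-open, open, or closed interval.

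The necessity half, together with the endpoint exclusions, is the first and easier step. Counting degrees at a vertex $v$ gives $xr \le d_G(v) \le x(r+a)$, whence $x \le \tfrac{d}{r}$ and $x \ge \tfrac{d+s}{r+a}$ for any graph realising both degree extremes; for $x$ strictly outside the interval a $d$-regular graph (respectively a $(d+s)$-regular graph) already fails. The endpoint refinements come from parity: if $x=\tfrac{d}{r}$ is an integer then on a $d$-regular graph every factor is forced to be $r$-regular, and an $r$-regular graph of odd order cannot exist when $r$ is odd; symmetrically $x=\tfrac{d+s}{r+a}$ forces $(r+a)$-regular factors, impossible on a $(d+s)$-regular graph of odd order when $r+a$ is odd. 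Thus the lower-bound witnesses are simply regular graphs of odd order realising the appropriate extreme degree, and these exclude exactly the offending endpoint.

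The sufficiency half---that for every integer $x$ strictly inside the interval every $(d,d+s)$-simple graph does admit such a factorisation---is the crux and the main obstacle. Following the template behind the bipartite Theorem~\ref{T1}, I would construct the factorisation from an equitable edge-colouring of $G$, in which at each vertex the colour classes are as nearly equal as possible, and then amalgamate the colour classes into $x$ bundles whose per-vertex degrees are driven into $\{r,\dots,r+a\}$ by the slack afforded by $a\ge 1$. The difficulty absent in the bipartite case is that the simple-graph restriction interacts both with Vizing's bound \cite{Vizing64} and with the very parity conditions above, so the rounding in the colouring must be aligned exactly with the endpoints of the interval; I expect this to be handled by an amalgamation--detachment argument, reducing to an edge-colouring of a graph on few vertices and pulling it back by a detachment theorem whose divisibility and parity hypotheses coincide with the endpoint rule. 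Making this alignment tight, so that the construction succeeds for every admissible $x$ and not merely for $x$ well inside the interval, is where the main effort will go.

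Finally, Theorem~\ref{T11} follows by counting. With the interval determined, the number of admissible $x$ is $V(d)=\big\lfloor \tfrac{d}{r}\big\rfloor-\big\lfloor \tfrac{d+s}{r+a}\big\rfloor$ in the case $r$ even, $a$ odd, with the floors replaced by the appropriate floors and ceilings (and a possible $-1$) according to which endpoints are admitted in the other parities; one then solves for the least $d$ with $V(d)\ge t$. This is routine floor--ceiling algebra: for instance, when $r$ is even and $a$ odd one checks that at $d=r\lceil\tfrac{tr+s}{a}\rceil+(t-1)r$ one has $V(d)=t$ while $V(d-1)<t$, giving the stated value, and the remaining parities differ only by the endpoint bookkeeping. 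The degenerate values $\sigma=r$ for $t=1$ arise as the subcase in which the interval is non-empty for \emph{every} $d\ge r$, so that a single admissible $x$ always exists; a short computation shows this happens exactly when $a\ge r+s$ (for $r,a$ both odd) or $a\ge r+s+1$ (for $r$ odd, $a$ even), and otherwise a gap appears at some $d$ and the general formula takes over.
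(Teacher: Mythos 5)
Your overall architecture---first prove the four-parity characterisation of the admissible numbers of factors (the paper's Theorem~\ref{T12}), then extract Theorem~\ref{T11} by counting integers in the resulting interval---is exactly the paper's architecture, and your necessity argument (degree averaging plus odd-order regular witnesses that force $r$-regular or $(r+a)$-regular factors) is precisely Lemmas~\ref{L13} and~\ref{L14}. The genuine gap is in the sufficiency step, and it sits exactly where you say ``the main effort will go.'' The equitable-colouring route (Theorem~\ref{T18} of Hilton--de Werra plus the pendant-edge trick) proves only the \emph{open}-interval statement $\frac{d+s}{r+a} < x < \frac{d}{r}$ (the paper's Lemma~\ref{L19}): at a vertex $v$ with $x \mid d(v)$ and $d(v)/x = r$---which is unavoidable when $x$ equals the included endpoint $\frac{d}{r}$, e.g.\ on a $d$-regular graph---Theorem~\ref{T18} is inapplicable (such vertices are adjacent to one another), and the pendant-edge trick yields only a nearly equitable colouring, which permits a colour class of degree $r-1$ at such a vertex, destroying the factorization. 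So the colouring machinery you propose demonstrably fails at exactly the endpoints whose inclusion distinguishes cases (iii) and (iv), and ``an amalgamation--detachment argument whose divisibility and parity hypotheses coincide with the endpoint rule'' is a name for the missing step, not an argument. The paper itself concedes that a direct proof of \ref{T12}(iii) and \ref{T12}(iv) ``seems to be quite hard'' and takes a different route: it gets the closed-interval case \ref{T12}(i) from the pseudograph doubling result (Lemma~\ref{L15}, inherited from the bipartite theory), gets the open case from Lemma~\ref{L19}, and then finesses the two $a$-odd cases in Sections~\ref{sec:threshold} and~\ref{sec:thresholdnumber} by interleaving the threshold counting with the characterisation, never producing the direct endpoint construction your plan relies on.

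Two smaller but real errors in your counting step. First, $\sigma$ is the least $d_0$ such that \emph{every} $d \ge d_0$ yields at least $t$ admissible values of $x$; the count $V(d)$ is not monotone in $d$, so verifying ``$V(d)\ge t$ and $V(d-1)<t$'' at a single point is not enough. This is why the paper's Lemma~\ref{L21}, Theorem~\ref{T22} and Theorems~\ref{T25}--\ref{T26} run the estimate for all $k \ge 0$, i.e.\ for every $d$ above the claimed bound. Second, your explanation of the degenerate values $\sigma = r$ is wrong: when $t=1$, $r,a$ both odd, $a \ge r+s$ and $d=r$, your interval $\left[\frac{d+s}{r+a}, \frac{d}{r}\right) = \left[\frac{r+s}{r+a}, 1\right)$ contains \emph{no} integer, so your characterisation would predict $\sigma > r$. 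The truth is that the characterisation itself fails at $d=r$ (hence the hypothesis $d>r$ in \ref{T12}(iv)): the endpoint-exclusion argument in Lemma~\ref{L14} needs $x \ge 2$, and at $d=r$, $x=1$ the graph is itself a single $(r,r+a)$-factor. The paper handles this by treating $d=r$, $t=1$ as a separate case in Theorems~\ref{T27} and~\ref{T29} (and likewise in Theorem~\ref{T23} for $r$ odd, $a$ even), rather than deducing it from the interval count.
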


The evaluation of (i) in Theorem \ref{T11} was attempted in \cite{HiltonRaj2}, but we thank C.J.H. McDiarmid for pointing out that that evaluation was wrong.

It will be noticed that in the case of simple graphs, the evaluations are all quite close to each  other (not far from $r \left\lceil \frac{tr+s}{a} \right\rceil + (t-1)r$ ) unlike the case of pseudographs where the denominator varies from $a-2$ to $a$ in the various cases.

Another unexpected point of interest for simple graphs is given in Theorem \ref{T12}.

\begin{theorem} Let $a \ge 1$, $r \ge 1$, $s \ge 0$.  Every $(d, d+s)$-simple graph $G$ has an $(r, r+a)$-factorization with $x$ factors if and only if 

\begin{tabular}{llll} 

{\rm(i)} &  $\displaystyle\frac{d+s}{r+a} \le x \le \frac{d}{r}$ & if & $r$ and $a$ are both even; \vspace{0.35cm}\\

{\rm(ii)} & $\displaystyle\frac{d+s}{r+a} < x < \frac{d}{r}$ & if & $r$ is odd and $a$ is even and $d> {\rm max}(r, r+s-a)$; \vspace{0.35cm} \\

{\rm(iii)} & $\displaystyle\frac{d+s}{r+a} < x \le \frac{d}{r}$ & if & $r$ is even and $a$ is odd and $d > r+a-s$; \vspace{0.35cm} \\

{\rm (iv)} & $\displaystyle\frac{d+s}{r+a} \le x < \frac{d}{r}$  & if & $r$ and $a$ are both odd and $d > r$.

\end{tabular}

\label{T12}\end{theorem}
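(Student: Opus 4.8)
I would prove each of the two inequalities in (i)--(iv) separately as a necessity part and a sufficiency part, treating the upper bound $x\le d/r$ (resp.\ $x<d/r$) and the lower bound $(d+s)/(r+a)\le x$ (resp.\ $<x$) independently, since the strictness at the top is governed by the parity of $r$ and the strictness at the bottom by the parity of $r+a$. The unifying observation is that if $G$ has a factorization into factors $F_1,\dots,F_x$, then at every vertex $v$ one has $xr\le d_G(v)\le x(r+a)$, so $x\le d_G(v)/r$ and $x\ge d_G(v)/(r+a)$; moreover at a vertex of degree exactly $xr$ \emph{every} factor has degree exactly $r$ there, and at a vertex of degree exactly $x(r+a)$ every factor has degree exactly $r+a$ there. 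Applying this to a vertex of degree $d$ and one of degree $d+s$ yields the weak bounds $(d+s)/(r+a)\le x\le d/r$, which are therefore necessary in all four cases.

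\textbf{Necessity of the strict bounds.} I would show that when $r$ is odd the value $x=d/r$ (an integer only if $r\mid d$) fails for \emph{some} $(d,d+s)$-graph. If $d$ is even, take $G$ to be a $d$-regular simple graph on an \emph{odd} number of vertices, which exists (e.g.\ a circulant). In a factorization with $x=d/r$ factors each $F_i$ is forced to be $r$-regular, impossible since $r$ odd and $|V(G)|$ odd make the degree sum $r|V(G)|$ odd. If $d$ is odd, a $d$-regular graph is forced onto an even vertex set, so instead I would use a \emph{bridge gadget}: let $S$ be a vertex set of odd size, every vertex of which has degree $d$ in $G$, joined to the rest of $G$ by a single edge. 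The degree sum inside $S$ is $d|S|-1$, which is even, so such a simple graph exists. For each factor the number of its edges leaving $S$ has the same parity as $\sum_{v\in S}d_{F_i}(v)=r|S|$, which is odd; hence each factor uses at least one edge of this cut, and the edge-disjoint factors together need at least $x\ge 2$ edges (using $d>r$) across a cut of size $1$, a contradiction. The lower bound is handled dually: when $r+a$ is odd the value $x=(d+s)/(r+a)$ forces every factor to be $(r+a)$-regular at each vertex of degree $d+s$, and the same two constructions (a $(d+s)$-regular graph on an odd vertex set, or a bridge gadget with all of $S$ of degree $d+s$) supply a $(d,d+s)$-graph with no such factorization. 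This produces the strict inequalities exactly when $r$, respectively $r+a$, is odd.

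\textbf{Sufficiency.} For each admissible $x$ I would build a factorization into $x$ factors from an $x$-edge-colouring of $G$ all of whose colour classes have degree in $[r,r+a]$ at every vertex, the natural tool being a sharp balanced (equitable) edge-colouring of the simple graph $G$. The decisive feature is the rigidity at the endpoints: at $x=d/r$ every factor is forced to be exactly $r$-regular at each degree-$d$ vertex, whereas for $x$ strictly smaller this rigidity disappears and a balanced colouring with all colour-degrees confined to $[r,r+a]$ can be produced, the extra room being precisely what repairs any local parity obstruction. Whether the rigid endpoint colouring itself exists is decided by parity: the forced $r$-regular factors meet no parity obstruction exactly when $r$ is even, which is why the upper endpoint is attainable (non-strict) for $r$ even and not for $r$ odd. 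The lower endpoint is controlled in the same way by the parity of $r+a$, giving the strict/non-strict dichotomy across (i)--(iv); this is also where simple graphs diverge from bipartite graphs, for which de Werra's balanced colouring hits the tight floor and ceiling outright and no strictness is needed.

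\textbf{Main obstacle and bookkeeping.} The crux is the sufficiency step: establishing the sharp balanced edge-colouring for simple graphs with colour-degrees confined to $[r,r+a]$ and controlling the parity-forced deviations so that no colour-degree escapes the interval at a critical vertex. I expect this to require the structural machinery developed for Theorem~\ref{T11} rather than a black-box colouring theorem. The side conditions $d>r$, $d>r+a-s$ and $d>\max(r,\,r+s-a)$ in (ii)--(iv) are the non-degeneracy hypotheses ensuring that the relevant extremal value ($x=d/r$ or $x=(d+s)/(r+a)$) is at least $2$, so that the bridge argument applies, and that the stated interval of admissible $x$ is non-empty; checking these is routine arithmetic I would carry out case by case.
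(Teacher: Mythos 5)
Your necessity argument is sound and is essentially the paper's own: the averaging bound is Lemma \ref{L13}, and your two gadgets (an odd-order regular graph when $d$, resp.\ $d+s$, is even; a one-edge cut separating an odd set of extreme-degree vertices when it is odd) are exactly the constructions of Lemma \ref{L14}, where the odd set is realized explicitly as $K_{d+2}$ minus a $P_3$ and a near-perfect matching. The interior sufficiency --- every integer $x$ with $\frac{d+s}{r+a}<x<\frac{d}{r}$ works --- is also recoverable along the lines you sketch, but note that a plain equitable $x$-edge-colouring of a simple graph need not exist; the paper (Lemma \ref{L19}) obtains it from the Hilton--de Werra theorem (Theorem \ref{T18}) by attaching a pendant edge at each vertex with $x\mid d(v)$, and the strictness of both inequalities is precisely what keeps the resulting nearly-equitable colour classes inside $[r,r+a]$.

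The genuine gap is sufficiency at the closed endpoints: $x=\frac{d}{r}$ and $x=\frac{d+s}{r+a}$ in case (i), $x=\frac{d}{r}$ in case (iii), and $x=\frac{d+s}{r+a}$ in case (iv). There every factor is forced to have degree exactly $r$ (resp.\ exactly $r+a$) at every vertex of degree $d$ (resp.\ $d+s$), and the colouring machinery you invoke breaks down at exactly this point: if, say, $G$ is $xr$-regular, then \emph{every} vertex has degree divisible by $x$, so the hypothesis of Theorem \ref{T18} fails at every edge, and the pendant-edge repair only yields nearly-equitable classes, whose degrees may drop to $r-1$. Your claim that ``the forced $r$-regular factors meet no parity obstruction exactly when $r$ is even'' is the conclusion to be proven, not a proof: absence of a parity obstruction does not produce the factorization. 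The paper does not close this by colouring arguments at all. For case (i) it imports Lemma \ref{L15} (every $(2d,2d+2s)$-pseudograph is $(2r,2r+2a)$-factorable when $\frac{d+s}{r+a}\le x\le\frac{d}{r}$, proved in \cite{Hilton10} via Euler orientations and bipartite multigraph colourings) and deduces Lemma \ref{L16} by parity adjustments of $d$ and $s$; for cases (iii) and (iv) it concedes that a direct proof ``seems to be quite hard'' and resorts to the roundabout arguments of Sections \ref{sec:threshold} and \ref{sec:thresholdnumber}. Note also that the dependency you propose runs backwards: in the paper, Theorem \ref{T11}(iii),(iv) are \emph{deduced from} Theorem \ref{T12}(iii),(iv), so there is no independent ``machinery developed for Theorem \ref{T11}'' available to you here. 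Finally, the cheap reduction of the case (iii) endpoint to case (i) (replace $a$ by the even value $a-1$) only covers $s\le x(a-1)$, short of the full range $s<xa$, so the remaining strip genuinely requires a new idea, and your proposal offers nothing for it.
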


\section{Preliminary Considerations}\label{sec:preliminary}
\subsection{Basic Inequalities}\label{subsec:basic}
First we show that if all simple $(d, d+s)$-graphs $G$ have an $(r, r+a)$-factorization with $x$ factors then
$$\frac{d+s}{r+a} \; \le \; x \/ \le \/ \frac{d}{r}\, .$$

\begin{lemma} Suppose that all simple $(d, d+s)$-graphs have an $(r, r+a)$-factorization.  Then
$$\frac{d+s}{r+a} \; \le \; x \; \le \; \frac{d}{r}\, .$$
\label{L13}\end{lemma}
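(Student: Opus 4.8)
The plan is to reduce the statement to a single degree count at a vertex, and then to feed the hypothesis two carefully chosen extremal graphs. Suppose $x$ is an integer for which every simple $(d,d+s)$-graph has an $(r,r+a)$-factorization into $x$ factors. Fix such a graph $G$ with factors $F_1,\dots,F_x$. Since the $F_i$ partition $E(G)$, at every vertex $v$ we have $\deg_G(v)=\sum_{i=1}^{x}\deg_{F_i}(v)$, and as each $F_i$ is an $(r,r+a)$-factor every summand lies in $\{r,\dots,r+a\}$. Hence $xr\le \deg_G(v)\le x(r+a)$ for every $v$, which is the single inequality that will drive both bounds.

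For the upper bound $x\le d/r$, I would apply the hypothesis to the complete graph $K_{d+1}$. This is a simple $d$-regular graph, hence a $(d,d+s)$-graph, so it has a factorization into $x$ factors; taking any vertex, $\deg(v)=d$, and the left inequality above gives $xr\le d$. Symmetrically, for the lower bound I would use $K_{d+s+1}$, a simple $(d+s)$-regular graph and therefore also a $(d,d+s)$-graph; at any vertex $\deg(v)=d+s$, and the right inequality gives $x(r+a)\ge d+s$. Combining the two yields $\frac{d+s}{r+a}\le x\le \frac{d}{r}$.

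There is no real obstacle here: the only thing to notice is that neither bound follows from an arbitrary single $(d,d+s)$-graph, since such a graph need not attain the degree $d$ or the degree $d+s$ at any vertex. The point is therefore to exploit that the hypothesis applies to \emph{all} such graphs, and to select the two regular extremes that pin $x$ from below and from above. Using complete graphs sidesteps any parity concern about the existence of regular simple graphs (for odd $k$ the graph $K_{k+1}$ has an even number of vertices, so it is always available), and the argument is valid for all $r\ge 1$ and $s,a\ge 0$.
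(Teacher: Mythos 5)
Your proof is correct and follows essentially the same route as the paper: both arguments apply the hypothesis to a $d$-regular graph to force $xr\le d$ and to a $(d+s)$-regular graph to force $x(r+a)\ge d+s$, via the observation that the factor degrees at a vertex sum to the vertex degree and each lies in $\{r,\dots,r+a\}$. Your explicit choice of $K_{d+1}$ and $K_{d+s+1}$ merely makes concrete the regular graphs whose existence the paper's proof takes for granted.
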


\begin{proof} Let $G$ be a simple $(d+s)$-regular graph and suppose $x < \frac{d+s}{r+a}$.  Suppose $G$ has an $(r, r+a)$-factorization with $x$ factors. The average degree over all the factors of any vertex is $\frac{d+s}{x}$.  But $\frac{d+s}{x} > r+a$, so the largest degree of a vertex in some factor is greater than $r+a$, a contradiction.  Similarly, suppose that $G$ is a simple $d$-regular graph and that $x > \frac{d}{r}$.  The average degree over all the factors of any vertex is $\frac{d}{x} < r$, so the smallest degree in some factor is less than $r$, a contradiction.  Therefore
$$ \frac{d+s}{r+a} \; \le \; x \; \le \frac{d}{r}\;,$$
as asserted.
\end{proof}

Next we show that 
\begin{lemma} The inequalities in Theorem \ref{T12} are necessary conditions for all simple $(d, d+s)$-graphs to have an $(r, r+a)$-factorization.
\label{L14}\end{lemma}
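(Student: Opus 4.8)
The plan is to separate the two halves of each inequality and observe that the non-strict parts cost nothing: Lemma \ref{L13} already gives $\frac{d+s}{r+a}\le x\le\frac{d}{r}$ in every case. Hence the entire content of Lemma \ref{L14} is to exhibit, whenever Theorem \ref{T12} asserts a \emph{strict} sign, a single $(d,d+s)$-simple graph with no $(r,r+a)$-factorization into the boundary number of factors. Only the two boundary integers $x=\frac{d}{r}$ and $x=\frac{d+s}{r+a}$ can violate a strict bound while satisfying the non-strict one, so I would exclude each of these (when it is an integer), treating the two endpoints dually.

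The first step is the forced-regularity reduction. If a $d$-regular graph $G$ has an $(r,r+a)$-factorization into $x=\frac{d}{r}$ factors, then each vertex spreads total degree $d=xr$ over $x$ factors of degree at least $r$, so every factor is exactly $r$-regular; the factorization is an $r$-factorization. Dually, on a $(d+s)$-regular graph an $(r,r+a)$-factorization into $x=\frac{d+s}{r+a}$ factors is an $(r+a)$-factorization. Thus the upper endpoint is controlled by the parity of $r$ and the lower endpoint by the parity of $r+a$. When the forced degree $\rho\in\{r,r+a\}$ is even there is nothing to exclude: Lemma \ref{L13} supplies the non-strict inequality, and indeed Petersen's $2$-factor theorem splits any regular graph of the relevant (even) degree into $\rho$-regular factors, matching the non-strict sign. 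So only odd forced degrees require work, and matching the four parity patterns $(r,a)=$ (even,even), (odd,even), (even,odd), (odd,odd) of Theorem \ref{T12} to the parities of $r$ and $r+a$ gives exactly the endpoints marked strict.

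The substance is therefore to produce, for an odd $\rho$ dividing the regular degree $D\in\{d,d+s\}$, a $D$-regular simple graph with no $\rho$-factorization. I would split on the parity of $D$. If $D$ is even I would take $G=K_{D+1}$: it is $D$-regular on the odd number $D+1$ of vertices, so $\rho(D+1)$ is odd and $G$ has no $\rho$-regular spanning subgraph whatsoever. If $D$ is odd I would use a bridge construction: glue two simple graphs, each on an odd number of vertices with one distinguished vertex of degree $D-1$ and all others of degree $D$, by a single edge $e$ joining the distinguished vertices; the result is $D$-regular and $e$ is a bridge with both sides of odd order. For any $\rho$-regular factor $F$ and the odd side $S$, the cut identity $e_F(S,\bar S)\equiv\rho|S|\equiv1\pmod2$ forces $e\in F$; since the factors are edge-disjoint, $e$ cannot lie in each of the $x\ge2$ factors, a contradiction.

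The remaining bookkeeping is to confirm that the side hypotheses in Theorem \ref{T12} are precisely what the argument needs. For the upper endpoint the bridge argument requires $x=\frac{d}{r}\ge2$, which together with $r\mid d$ is exactly $d>r$; for the lower endpoint it requires $x=\frac{d+s}{r+a}\ge2$, which is exactly $d+s>r+a$. These are the hypotheses attached to cases (ii), (iii) and (iv), and they also rule out the degenerate situation $D=\rho$ (where $x=1$ and $G$ itself is the single factor). When $D$ is even no such hypothesis is needed, since $K_{D+1}$ has no $\rho$-factor for any $x$. I expect the only real care to lie here: verifying that the two regular constructions are realizable as simple graphs for every relevant $D$ and that the strict signs line up case-by-case with the odd forced degrees; the parity obstruction itself is then immediate.
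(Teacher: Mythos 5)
Your proposal is correct and follows essentially the same route as the paper: the non-strict bounds come from the averaging argument of Lemma \ref{L13}, and each strict endpoint is excluded by first forcing every factor to be exactly $r$-regular (resp.\ $(r+a)$-regular) at $x=\frac{d}{r}$ (resp.\ $x=\frac{d+s}{r+a}$) and then invoking a parity obstruction --- $K_{D+1}$ when the relevant regular degree $D$ is even, and a one-bridge gluing of two odd-order halves when $D$ is odd, where the bridge must lie in every odd-regular factor. The one detail you leave unverified, the existence of simple halves of odd order with one vertex of degree $D-1$ and the rest of degree $D$, is exactly what the paper supplies explicitly by taking $K_{D+2}$, deleting a $P_3$ and a perfect matching on the remaining $D-1$ vertices.
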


\begin{proof} 

\begin{itemize} 
\item[(i)] follows from Lemma \ref{L13}.
\item[(ii)] Let $x = \frac{d}{r}$. First suppose that $d$ is odd. Let $D$ be a graph obtained from $K_{d+2}$ by removing a $P_3$ and $\frac{1}{2}(d-1)$ $K_2'$s, so $D$ has one vertex of degree $d-1$ and the remaining vertices have degree $d$. Let $G$ be the regular graph of degree $d$ obtained from two copies of $D$ by joining the two vertices of degree $d-1$ by an edge $e$. Then $G$ is regular of degree $d$. Since $D$ has odd order, any $r$-factor of $G$ must contain the edge $e$. Since $x \ge 2$ and since in any $(r,r+a)$-factorization of $G$ with $x$ factors, each factor must be an $r$-factor, it follows that $G$ does not have an $(r,r+a)$-factorization with $x$ factors.

Next suppose that $d$ is even. Then $K_{d+1}$ has odd order and degree $d$.  Let $A=K_{d+1}$ and $x = \frac{d}{r}$. Since $r$ is odd, $A$ has no $r$-factor, and so does not have an $(r,r+a)$-factorization with $x$ factors.

\item[(iii)] In this case $r+a$ is odd. Let $x = \frac{d+s}{r+a} \ge 2$. Suppose first that $d+s$ is odd. The argument is very like that used in (ii) when $d$ is odd. Let $D$ be a graph obtained from $K_{d+s+2}$ by removing a $P_3$ and $\frac{1}{2}(d+s-1)$ $K_2'$s, so that $D$ has one vertex of degree $d+s-1$ and the remaining vertices have degree $d+s$. Take two copies of ~$D$ and join the two vertices of degree $d+s-1$ by an edge ~$e$. Call the graph obtained this way $G$. Then $G$ is regular of degree $d+s$ and has even order. Any $(r+a)$-factor of $G$ must contain the edge $e$. Since in any $(r, r+a)$-factorization of $G$ with $x$ factors, each factor must be an $(r+a)$-factor it follows that $G$ does not have an $(r, r+a)$-factorization with $x$ factors.

If $d+s$ is even, then $K_{d+s-1}$ has odd order and even degree $d+s$. Since $r+a$ is odd, $K_{d+s-1}$ has no $(r+a)$-factor, and it follows as in Case 1 that $G$ does not have an $(r, r+a)$-factorization with $x$ factors.

\item[(iv)] First suppose that $x= \frac{d}{r}$. We assume in this case, as in case (ii), that $r$ is odd. Then the argument in case (ii) works verbatim to give examples (when $d$ is odd and when $d$ is even) of $(d, d+s)$-graphs which do not have an $(r, r+a)$-factorization with $x$ factors.

Next suppose that $x= \frac{d+s}{r+a}$. We assume in this case, as in the case (iii), that $r+a$ is odd. Then the argument in case (iii) works verbatim to give examples (when $d+s$ is odd and when $d+s$ is even) of $d, d+s)$-graphs which do not have an $(r, r+a)$-factorization with $x$ factors.
\end{itemize}
\end{proof}

In \cite{Hilton10} the following lemma was proved:
\begin{lemma} If $d,r$ and $a$ are positive integers, and $s$ is a non-negative integer, and if 
$$\frac{d+s}{r+a} \; \le \; x \; \le \; \frac{d}{r}$$
then every $(2d, 2d+2s)$-pseudograph is $(2r, 2r+2a)$-factorizable with $x$ factors. 
\label{L15} \end{lemma}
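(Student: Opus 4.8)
The plan is to reduce the problem on the even pseudograph $G$ to an edge-colouring problem on an associated bipartite multigraph, where the extra flexibility makes the required factorization transparent. Since $G$ is a $(2d, 2d+2s)$-pseudograph, every vertex of $G$ has even degree, so each connected component of $G$ is Eulerian. First I would fix an Eulerian circuit in each component and orient every edge in the direction in which the circuit traverses it; a loop at $v$ is traversed once and is oriented so that it leaves and re-enters $v$. In the resulting orientation each vertex $v$ has equal in-degree and out-degree, namely $\frac{1}{2}\deg_G(v)$.

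Next I would form a bipartite multigraph $B$ by splitting each vertex $v$ into an `out-copy' $v^{+}$ and an `in-copy' $v^{-}$, replacing each edge oriented from $u$ to $v$ by an edge $u^{+}v^{-}$ (so a loop at $v$ becomes the single edge $v^{+}v^{-}$). Then $\deg_B(v^{+}) = \deg_B(v^{-}) = \frac{1}{2}\deg_G(v)$, which lies in $\{d, d+1, \ldots, d+s\}$ because $\deg_G(v)$ is even and lies in $\{2d, \ldots, 2d+2s\}$. Thus $B$ is a bipartite $(d, d+s)$-multigraph.

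Now I would apply de Werra's theorem on equitable edge-colourings of bipartite multigraphs to colour the edges of $B$ with $x$ colours so that, at every vertex $w$ of $B$, each colour appears either $\lfloor \deg_B(w)/x \rfloor$ or $\lceil \deg_B(w)/x \rceil$ times. The hypothesis $x \le d/r$ gives $\deg_B(w)/x \ge d/x \ge r$, so each colour appears at least $\lfloor \deg_B(w)/x \rfloor \ge r$ times at $w$; and the hypothesis $x \ge (d+s)/(r+a)$ gives $\deg_B(w)/x \le (d+s)/x \le r+a$, so, since $r+a$ is an integer, each colour appears at most $\lceil \deg_B(w)/x \rceil \le r+a$ times at $w$. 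Hence every colour class of $B$ is an $(r, r+a)$-factor, and $B$ is $(r, r+a)$-factorable with $x$ factors.

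Finally I would pull the colouring back to $G$, colouring each edge of $G$ with the colour of the corresponding edge of $B$; this yields edge-disjoint spanning subgraphs $H_1, \ldots, H_x$ that partition $E(G)$. For a vertex $v$, its degree in $H_i$ equals the number of colour-$i$ edges at $v^{+}$ plus the number at $v^{-}$ (a loop, as the edge $v^{+}v^{-}$, is counted once at each copy, contributing $2$, as it should). Since each of these two counts lies in $\{r, \ldots, r+a\}$, we obtain $\deg_{H_i}(v) \in \{2r, \ldots, 2r+2a\}$, so each $H_i$ is a $(2r, 2r+2a)$-factor of $G$, as required. I expect the main obstacle to be the verification in the third step: one must check that the floor and ceiling values produced by the equitable colouring fall exactly in the window $\{r, \ldots, r+a\}$, and it is precisely this requirement that forces the hypothesis $\frac{d+s}{r+a} \le x \le \frac{d}{r}$. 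Everything else is routine bookkeeping once the bipartite model and the equitable-colouring theorem are in place.
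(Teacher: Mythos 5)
Your proof is correct and takes essentially the route the paper itself indicates: the paper does not prove Lemma \ref{L15} directly but cites \cite{Hilton10}, remarking that the lemma ``was derived in a not very complicated way from a similar result for bipartite multigraphs'', and your Eulerian-orientation/vertex-splitting reduction followed by the McDiarmid--de~Werra equitable edge-colouring theorem (Theorem \ref{T17}) is exactly that derivation. The details --- the halved degrees at the split vertices, the loop counted once at $v^{+}$ and once at $v^{-}$, and the floor/ceiling estimates forced by $\frac{d+s}{r+a} \le x \le \frac{d}{r}$ --- are all handled correctly.
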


Lemma \ref{L15} was derived in a not very complicated way from a similar result for bipartite multigraphs, which is relatively straightforward to prove.

We shall use Lemma \ref{L15} to prove:

\begin{lemma} Let $r \ge 2, a \ge 2, s \ge 0$.  Then Theorem \ref{T12}(i) is true.
\label{L16} \end{lemma}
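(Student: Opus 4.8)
The necessity of the inequalities $\frac{d+s}{r+a} \le x \le \frac{d}{r}$ in Theorem \ref{T12}(i) is exactly Lemma \ref{L14}(i) (which itself rests on Lemma \ref{L13}), so it remains to prove sufficiency: if $r,a \ge 2$ are even and $x$ is an integer with $\frac{d+s}{r+a} \le x \le \frac{d}{r}$, then every simple $(d,d+s)$-graph $G$ has an $(r,r+a)$-factorization into $x$ factors. The plan is to deduce this from Lemma \ref{L15} by regarding $G$ as a pseudograph (it has neither multiple edges nor loops) and enlarging its degree interval to one with \emph{even} endpoints, so that the doubled parameters of Lemma \ref{L15} match $r$ and $a$.

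First I would write $r = 2r_1$ and $a = 2a_1$ with $r_1,a_1 \ge 1$, and set $D = \lfloor d/2\rfloor$, $E = \lceil (d+s)/2\rceil$, $S = E - D \ge 0$. Then $2D \le d \le d+s \le 2E = 2D+2S$, so every degree of $G$ lies in $\{2D,\ldots,2D+2S\}$ and $G$ is a $(2D,2D+2S)$-pseudograph. Applying Lemma \ref{L15} with its parameters $(r,a,s,d)$ taken to be $(r_1,a_1,S,D)$, every $(2D,2D+2S)$-pseudograph — in particular $G$ — is $(2r_1,2r_1+2a_1)=(r,r+a)$-factorizable into $x$ factors, provided
$$\frac{D+S}{r_1+a_1} \le x \le \frac{D}{r_1};$$
that is, provided $\frac{2(D+S)}{r+a} \le x \le \frac{2D}{r}$. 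Since $G$ is simple, every factor produced is a simple graph, so this is a genuine $(r,r+a)$-factorization of $G$ with $x$ factors.

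The one point that must be verified — and this is the crux of the argument — is that the rounding from $d,d+s$ to the even values $2D,2D+2S$ discards no admissible integer $x$. For the upper bound: if $d$ is even then $2D=d$ and $\frac{2D}{r}=\frac{d}{r}$; if $d$ is odd then $r \nmid d$ (as $r$ is even), so an integer $x \le \frac{d}{r}$ satisfies $x \le \lfloor d/r\rfloor \le \frac{d-1}{r} = \frac{2D}{r}$. For the lower bound: if $d+s$ is even then $2(D+S)=d+s$ and $\frac{2(D+S)}{r+a}=\frac{d+s}{r+a}$; if $d+s$ is odd then $(r+a)\nmid(d+s)$ (as $r+a$ is even), so an integer $x \ge \frac{d+s}{r+a}$ satisfies $x \ge \lceil (d+s)/(r+a)\rceil \ge \frac{d+s+1}{r+a} = \frac{2(D+S)}{r+a}$. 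Hence every integer $x$ in $[\frac{d+s}{r+a},\frac{d}{r}]$ also lies in $[\frac{2(D+S)}{r+a},\frac{2D}{r}]$, and the displayed hypothesis of Lemma \ref{L15} holds.

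The main obstacle is precisely this parity bookkeeping at the two endpoints: before the check, the admissible range delivered by Lemma \ref{L15} is a proper subinterval of the desired range, and one must confirm that it can contract only across non-integers. The structural reason it works is that $r$ and $r+a$ are both even, so the endpoints $\frac{d}{r}$ and $\frac{d+s}{r+a}$ can be integers only when $d$, respectively $d+s$, is even — exactly the cases in which the even-endpoint reduction is lossless. (Finally, one notes that whenever the range contains a positive integer $x$ we have $d \ge rx \ge r \ge 2$, so $D = \lfloor d/2\rfloor \ge 1$ and Lemma \ref{L15} is invoked with legitimate parameters; and since $\frac{d+s}{r+a}>0$ forces $x \ge 1$, there are no degenerate values of $x$ to consider.)
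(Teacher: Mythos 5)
Your proof is correct and takes essentially the same route as the paper: both deduce sufficiency from Lemma \ref{L15} by enlarging the degree interval of $G$ to one with even endpoints and using the evenness of $r$ and $r+a$ to check that no admissible integer $x$ can be lost at either endpoint. The only difference is presentational --- the paper splits into four parity cases for $(d,s)$ where you work uniformly with $\lfloor d/2\rfloor$ and $\lceil (d+s)/2\rceil$, and you spell out the endpoint parity justification that the paper leaves implicit.
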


\begin{proof} The necessity follows from Lemma \ref{L14}. We shall derive the sufficiency from Lemma \ref{L15}.

\begin{itemize}
\item[(1)] If $d$ and $s$ are even this follows from Lemma \ref{L15}.

\item[(2)] If $d$ and $s$ are both odd, if
$$\frac{d+s}{r+a} \; \le \; x \; \leq \frac{d}{r}$$
then $$\frac{(d-1)+(s+1)}{r+a} \; \le \; x \le \; \frac{d-1}{r}$$
so, by Lemma \ref{L15}, every $(d-1, (d-1) + (s+1))$-pseudograph is $(r,r+a)$-factorizable with $x$ factors. But a $(d, d+s)$-pseudograph is a $(d-1, (d-1)+(s+1))$-pseudograph. Therefore every $(d,d+s)$-pseudograph is $(r,r+a)$-factorizable with $x$ factors.

\item[(3)] If $d$ is odd and $s$ is even and $\frac{d+s}{r+a} \le x \le \frac{d}{r}$, then
$$\frac{(d-1)+(s+2)}{r+a} \; \le x \; \le \; \frac{d-1}{r}\,,$$
so, by Lemma \ref{L15}, every $(d-1, (d-1) + (s+2))$-pseudograph is $(r, r+a)$-factorable with $x$ factors. But every $(d, d+s)$-pseudograph is a $(d-1, (d-1) + (s+a))$-pseudograph, so every $(d,d+s)$-pseudograph is $(r,r+a)$-factorable with $x$ factors.

\item[(4)] If $d$ is even and $s$ is odd and $\frac{d+s}{r+a} \le x \le \frac{d}{r}$, then
$$\frac{d+(s+1)}{r+a}\; \le \; x \le \; \frac{4}{r}\,,$$
so, by Lemma \ref{L15}, every $(d, d+(s+1))$-pseudograph is $(r, r+a)$-factorable with $x$ factors. But a$(d,d+s)$-pseudograph is a $(d, d+(s+1))$-pseudograph, so every $(d, d+s)$-pseudograph is $(r,r+a)$-factorable with $x$ factors.
\end{itemize}
 \end{proof}

This completes the proof of the sufficiency in Lemma \ref{L16}.

\subsection{Equitable edge-colourings}\label{subsec:equitable}
We need various results about equitable and nearly equitable edge-colourings of simple graphs.

\medskip
\noindent{\bf Definition of an equitable edge-colouring}

\noindent If $\phi: E(G) \rightarrow \mathcal{C}$, where $\mathcal{C}$ is a set of colours, then $\phi$ is {\em equitable} if $$ ||\alpha(v)| - |\beta(v)|| \; \le \; 1\; ,$$
where $\alpha(v)$ and $\beta(v)$ are the sets of edges incident with $v \in V(G)$ coloured $\alpha$ and $\beta$ respectively, for every pair $\alpha$, $\beta$ of colours of $\mathcal{C}$, and for every vertex $v \in V(G)$.

\medskip
\noindent{\bf Definition of nearly equitable edge-colouring}

\noindent This is the same as above except that the requirement is that
$$||\alpha(v)| - |\beta(v)||  \; \le \; 2\; .$$

The oldest result on this topic is due independently to McDiarmid \cite{McDiarmid72} and de Werra \cite{DeWerra71}, and is not restricted to simple graphs.

\begin{theorem} Let $x$ be a positive integer and let $G$ be a bipartite multigraph.  Then $G$ has an equitable edge-coloring with $x$ colours.
\label{T17}\end{theorem}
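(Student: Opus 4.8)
The plan is to reduce the whole statement to a single recolouring move together with a monovariant that forces it to terminate at an equitable colouring. Starting from any $x$-edge-colouring of $G$ (for instance the one giving every edge the same colour), I would measure how far it is from being equitable by the potential
$$\Phi \;=\; \sum_{v\in V(G)}\ \sum_{i=1}^{x} |c_i(v)|^2,$$
where $c_1,\dots,c_x$ are the colour classes and $|c_i(v)|$ is the number of edges of colour $i$ at $v$. The colouring is equitable exactly when, at every vertex, the $x$ numbers $|c_i(v)|$ take only two consecutive values; the point of $\Phi$ is that, for a fixed sum, a list of non-negative integers has smaller sum of squares the more balanced it is, and strictly smaller as soon as two of the entries differ by at least $2$.

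The one move I need is the case $x=2$: every bipartite multigraph $H$ has a $2$-edge-colouring whose two colour classes $H_1,H_2$ satisfy $|d_{H_1}(v)-d_{H_2}(v)|\le 1$ at every vertex $v$. I would prove this by an Eulerian argument. First make every degree even while keeping the graph bipartite: introduce one dummy vertex in each part and join each odd-degree vertex of $H$ to the dummy vertex in the opposite part, adding if necessary one further edge between the two dummies to correct their parities. Each component of the enlarged graph is now Eulerian and, being bipartite, every closed Eulerian circuit has even length. Colouring the edges of each circuit alternately with the two colours then gives, at every vertex of even degree, exactly half of each colour, because each visit of the circuit uses two consecutively-coloured (hence differently-coloured) edges, and the even length guarantees the colouring closes up consistently. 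Deleting the dummy edges changes each originally-odd-degree vertex by one edge, so there the two classes differ by at most $1$; all other vertices remain perfectly balanced.

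With this move in hand the main proof is a short descent. If the current $x$-colouring is not equitable, then some vertex $v$ and some pair of colours $\alpha,\beta$ satisfy $|\alpha(v)|\ge|\beta(v)|+2$. The subgraph $H_{\alpha\beta}$ formed by the edges coloured $\alpha$ or $\beta$ is itself bipartite, so I recolour it by a balanced $2$-colouring from the previous paragraph, keeping all other colours fixed. At each vertex the pair of values $|\alpha(\cdot)|,|\beta(\cdot)|$ is rebalanced while its sum is preserved, so $\Phi$ does not increase at any vertex and strictly decreases at $v$; since $\Phi$ is a non-negative integer, only finitely many such moves are possible, and when none applies the colouring is equitable.

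The main obstacle is the $x=2$ lemma, and specifically the parity bookkeeping: one must add dummy edges so that all degrees become even without destroying bipartiteness, and then verify that deleting them disturbs the balance by at most one. Bipartiteness is exactly what makes the step work, since the alternation of two colours around an Eulerian circuit is consistent only when the circuit has even length; without it the statement is false, as a triangle has no equitable $2$-colouring (each vertex would need one edge of each colour, forcing the impossible count of $3/2$ edges per colour). Everything after that lemma is the routine monovariant argument above.
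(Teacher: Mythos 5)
Your proof is correct, and in fact complete. Note that there is nothing in the paper to compare it against: Theorem \ref{T17} is stated there as a known result, attributed to McDiarmid \cite{McDiarmid72} and de Werra \cite{DeWerra71}, and no proof is given in the paper itself.

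Both of your ingredients check out. The $x=2$ lemma is sound: after the dummy-vertex parity correction the enlarged graph stays bipartite with all degrees even, each component's Eulerian circuit has even length (a closed walk in a bipartite graph alternates sides), so the alternating colouring closes up consistently and every passage through a vertex contributes one edge of each colour; deleting the dummy edges unbalances only the originally odd vertices, and only by $1$. The descent is also sound: recolouring $H_{\alpha\beta}$ preserves $|\alpha(u)|+|\beta(u)|$ at every vertex $u$, and for a fixed sum a pair of non-negative integers has minimal sum of squares exactly when it is balanced to within $1$, so $\Phi$ never increases and drops by at least $2$ at the offending vertex; integrality of $\Phi$ forces termination at an equitable colouring. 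Your argument is essentially the classical Eulerian/alternating-chain proof in the spirit of de Werra. For comparison, there is a shorter standard route: at each vertex $v$ partition its $d(v)$ incident edges into $\left\lceil d(v)/x \right\rceil$ groups, all of size $x$ except at most one, and split $v$ into one copy per group; the split graph is bipartite with maximum degree $x$, hence properly $x$-edge-colourable by K\H{o}nig's theorem, and re-identifying the copies gives each colour $\left\lfloor d(v)/x \right\rfloor$ or $\left\lfloor d(v)/x \right\rfloor+1$ times at $v$. That proof is quicker but leans on K\H{o}nig's edge-colouring theorem; yours is self-contained, and its recolouring-plus-potential mechanism is exactly the kind of argument that extends (with difference $\le 2$, i.e.\ nearly equitable colourings) beyond the bipartite case, where strict equitability fails, e.g.\ for the triangle.
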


A result just for simple graphs was proved by Hilton and de Werra \cite{HiltonDeWerra94}.

\begin{theorem} Let $x$ be a positive integer and let $G$ be a simple graph.  Suppose that no two vertices $v$ and $w$ such that $x|d(v)$ and $x|d(w)$ are adjacent.  Then $G$ has an equitable edge-colouring with $x$ colours.
\label{T18}\end{theorem}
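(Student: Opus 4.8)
The plan is to prove Theorem~\ref{T18} by a monovariant (local switching) argument resting on a single elementary fact about two colours. First I would reformulate equitability: a colouring $\phi:E(G)\to\{1,\dots,x\}$ is equitable precisely when, at every vertex $v$, each colour is used $\lfloor d(v)/x\rfloor$ or $\lceil d(v)/x\rceil$ times. Writing $c_i(v)$ for the number of edges coloured $i$ at $v$, this is equivalent to saying that
$$F(\phi)=\sum_{v\in V(G)}\sum_{i=1}^{x}\binom{c_i(v)}{2}$$
is as small as it can be at $v$, since for fixed $d(v)=\sum_i c_i(v)$ the quantity $\sum_i\binom{c_i(v)}{2}=\tfrac12\sum_i c_i(v)^2-\tfrac12 d(v)$ is minimised exactly when the $c_i(v)$ differ pairwise by at most $1$. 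It therefore suffices to take a colouring minimising $F$ and to show that any such colouring that fails to be equitable admits an $F$-decreasing recolouring, a contradiction.

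The second ingredient is a two-colour balancing tool. For a pair of colours $\alpha,\beta$, let $H_{\alpha\beta}$ be the spanning subgraph whose edges are those coloured $\alpha$ or $\beta$. Adjoining an auxiliary vertex joined to every odd-degree vertex of $H_{\alpha\beta}$, taking an Eulerian circuit in each component, and colouring its edges alternately $\alpha,\beta$ produces a recolouring in which the two colours differ by at most $1$ at every vertex of $G$, and exactly balance at the even-degree vertices. This shows that any single pair can be balanced in isolation; the genuine difficulty, which the monovariant must control, is that rebalancing one pair can spoil another pair involving a third colour.

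Now take $\phi$ minimising $F$ and suppose it is not equitable, so there are a vertex $v_0$ and colours $\alpha,\beta$ with $c_\alpha(v_0)\ge c_\beta(v_0)+2$. I would trace a maximal alternating trail in $H_{\alpha\beta}$ starting at $v_0$ along an $\alpha$-edge, and interchange $\alpha$ and $\beta$ along it. At every internal visit the trail enters on one colour and leaves on the other, so the two counts there are preserved; the swap changes counts only at the ends. At $v_0$ it moves an edge from the over-represented colour $\alpha$ to the under-represented $\beta$, strictly decreasing the $v_0$-contribution to $F$, so the net effect on $F$ is decided entirely by the other end of the trail. If the trail is a path ending at some $u\ne v_0$, a short case check on the colour of its final edge, using the slack available at $u$, shows $F$ does not increase there, contradicting minimality. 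The dangerous configuration is a closed alternating trail returning to $v_0$, where the gain at $v_0$ can be cancelled and no obvious decrease survives.

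This last case is where I expect the real work to lie, and it is exactly where the hypothesis on divisibility enters. A vertex $v$ with $x\mid d(v)$ is \emph{rigid}: equitability forces $c_i(v)=d(v)/x$ for every colour, so it carries no slack to absorb a redirected swap. The assumption that no two rigid vertices are adjacent is precisely what prevents the alternating trail from being trapped between two such inflexible endpoints along a single edge, and thereby guarantees that one can always reroute the interchange so as to terminate at a non-rigid vertex and strictly decrease $F$. The main obstacle is therefore the careful bookkeeping of where the alternating trail can close up, together with a proof that the non-adjacency of rigid vertices always supplies an $F$-decreasing recolouring, which contradicts the minimality of $\phi$ and completes the argument.
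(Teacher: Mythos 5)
Be aware first that the paper you are working from contains no proof of Theorem~\ref{T18} at all: it is quoted as a known theorem of Hilton and de Werra \cite{HiltonDeWerra94}, so the comparison has to be made with their published proof, which is a long and intricate argument occupying the bulk of their paper. Measured against that, your proposal is a correct identification of the standard framework together with an IOU for the part that constitutes the actual theorem. What you genuinely carry out --- minimising the potential $F$, Eulerian balancing of a colour pair, and swapping along a maximal $(\alpha,\beta)$-alternating trail from a vertex $v_0$ with $c_\alpha(v_0)\ge c_\beta(v_0)+2$ --- is precisely the classical argument for the \emph{nearly} equitable result, i.e. $||\alpha(v)|-|\beta(v)||\le 2$ at every vertex, and that result needs no degree hypothesis whatsoever. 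Indeed, if the maximal trail ends at $u\ne v_0$, maximality forces $u$ to have at least one more edge of the terminal colour than of the other, so the swap does not increase $F$ at $u$ while strictly decreasing it at $v_0$; the only conclusion is that in an $F$-minimal non-equitable colouring every such trail closes up at $v_0$. The passage from discrepancy $2$ to discrepancy $1$ is the theorem, and there your text says only that you ``expect the real work to lie'' in that case and that non-adjacency of rigid vertices ``guarantees that one can always reroute the interchange''. No rerouting mechanism is defined, so this is a genuine gap, not a compressible detail.

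Moreover, the heuristic you offer for how the hypothesis enters does not accurately describe the obstruction. A closed trail that returns to $v_0$ with oppositely coloured end-edges changes $F$ at no vertex whatsoever, so $F$-minimality by itself can never rule it out; this trap can occur whether or not any vertex of the trail is rigid, so non-adjacency of rigid vertices does not by itself ``prevent the trail from being trapped''. The triangle $K_3$ with $x=2$ shows how serious the trapped case is: every $F$-minimal colouring has a vertex with colour counts $(2,0)$, every maximal alternating trail from that vertex closes up, and no rerouting can help because no equitable colouring exists --- here, of course, two rigid vertices are adjacent, which is exactly the excluded configuration. A correct proof must therefore invoke the hypothesis structurally inside the closed-trail analysis, and doing so is where essentially all of Hilton and de Werra's work lies (the later refinement of Zhang and Liu \cite{ZhangLiu11} is comparably technical). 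As it stands, you have proved nearly equitable colourability and conjectured the rest.
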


A nice improvement to this theorem by Xia Zhang and Guizhen Liu \cite{ZhangLiu11} appeared recently.

\vspace{0.25cm}
We can use Theorem \ref{T18} to prove the following very useful theorem.

\begin{lemma} Let $a, r$ be positive integers and $s$ a non-negative integer. If $G$ is a simple $(d, d+s)$-graph satisfying  $\frac{d+s}{r+a} < x \;< \frac{d}{r}$, then $G$ has an $(r, r+a)$-factorization with $x$ factors.
\label{L19}\end{lemma}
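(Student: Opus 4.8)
The plan is to deduce Lemma~\ref{L19} directly from the equitable edge-colouring result of Theorem~\ref{T18}. The key observation is that the strict inequalities $\frac{d+s}{r+a} < x < \frac{d}{r}$ translate, after clearing denominators, into the arithmetic facts $x r < d$ and $x(r+a) > d+s$. The first says that if we split the edges at a vertex of degree $d' \in \{d,\dots,d+s\}$ into $x$ colour classes as evenly as possible, the rounded-down share $\lfloor d'/x\rfloor$ is at least $r$, and the second says the rounded-up share $\lceil d'/x\rceil$ is at most $r+a$. So an equitable $x$-edge-colouring would automatically give each colour class degree in $\{r,\dots,r+a\}$ at every vertex, which is exactly an $(r,r+a)$-factorization with $x$ factors. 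Thus the entire content of the lemma reduces to producing an equitable $x$-edge-colouring of $G$.

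First I would fix notation and record the two inequalities $xr < d \le d' \le d+s < x(r+a)$ for every vertex degree $d'$, and verify carefully that for each such $d'$ we have $r \le \lfloor d'/x\rfloor$ and $\lceil d'/x\rceil \le r+a$; the strictness of both original inequalities is precisely what is needed to absorb the floor/ceiling rounding. Here I must be slightly careful at the endpoints: $x r < d$ gives $d/x > r$, hence $\lfloor d'/x\rfloor \ge \lfloor d/x\rfloor \ge r$ since $d'/x \ge d/x > r$ forces the floor to be at least $r$ (an integer strictly below $d'/x$ bounded below by $r$), and symmetrically $x(r+a) > d+s \ge d'$ gives $\lceil d'/x\rceil \le r+a$. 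Once this is established, any edge-colouring in which the $x$ classes differ by at most one edge at each vertex will do.

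Next I would invoke Theorem~\ref{T18} to supply that equitable colouring, but this is where the one genuine obstacle lies: Theorem~\ref{T18} requires that no two vertices $v,w$ with $x \mid d(v)$ and $x \mid d(w)$ be adjacent. So the main step is to handle, or to rule out, the vertices whose degree is divisible by $x$. The strategy I expect to use is a preprocessing/modification argument: a vertex $v$ with $x \mid d(v)$ is harmless on its own (it would receive exactly $d(v)/x$ edges in each class, and since $r \le d(v)/x \le r+a$ this lands in range), so the only problem is an edge joining two such vertices. I would aim to delete a small set of edges — for instance a matching among the ``bad'' vertices (those with degree divisible by $x$) — apply Theorem~\ref{T18} to the resulting graph, and then add the deleted edges back, checking that reinserting at most one edge per vertex perturbs each colour-class degree by at most one and hence cannot push any class outside $\{r,\dots,r+a\}$, again using that the inequalities were strict rather than weak.

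The delicate point in that reinsertion is making sure the restored edges can be distributed among the $x$ colours so that no class exceeds $r+a$ or falls below $r$ at the incident endpoints; the slack provided by the strict inequalities (at a bad vertex the equitable share is exactly $d(v)/x$, which satisfies $r < d(v)/x < r+a$ unless $d(v)/x$ hits an endpoint) is what I would lean on. I would therefore organise the final argument around a case analysis on whether the equitable share $d(v)/x$ at a bad vertex equals $r$, equals $r+a$, or lies strictly between, assigning the reinserted edge to a class that still has room. I expect the bulk of the work — and the most likely place for a subtle error — to be precisely this bookkeeping for vertices of degree divisible by $x$, rather than the main colouring step, which Theorem~\ref{T18} hands us essentially for free.
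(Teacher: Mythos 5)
Your proposal is correct and rests on the same two pillars as the paper's proof: the arithmetic observation that the strict inequalities $xr<d$ and $x(r+a)>d+s$ force $\lfloor d(v)/x\rfloor\ge r$ and $\lceil d(v)/x\rceil\le r+a$ at every vertex, and a reduction to Theorem~\ref{T18}, with the whole difficulty concentrated at vertices whose degree is divisible by $x$. Where you differ is the mechanism for neutralising those vertices. The paper deletes nothing: it attaches a pendant edge to every vertex $v$ with $x\mid d(v)$, obtaining a simple graph $G^+$ in which no degree is divisible by $x$ (for $x\ge2$; the case $x=1$ is trivial in both treatments), applies Theorem~\ref{T18} to $G^+$, and simply restricts the colouring back to $G$. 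At a bad vertex the restricted colouring is only nearly equitable, its classes lying in $\{m-1,m,m+1\}$ with $m=d(v)/x$, but strictness forces $r+1\le m\le r+a-1$, so everything stays in range and no colours ever need to be reassigned. Your deletion/reinsertion scheme also closes, provided two points are made precise. First, an arbitrary matching among the bad vertices is not enough: you need the bad vertices left uncovered to form an independent set, so you should delete a \emph{maximal} matching $M$ of the subgraph induced by the bad vertices; then (for $x\ge2$) every covered bad vertex acquires degree indivisible by $x$, and maximality guarantees that the hypothesis of Theorem~\ref{T18} holds in $G-M$. Second, the endpoint case analysis you anticipate is vacuous: at a covered bad vertex $d(v)/x$ can never equal $r$ or $r+a$ (again $r+1\le d(v)/x\le r+a-1$ by strictness), the equitable classes of $G-M$ there are $m-1$ or $m$, and hence reinserting the matching edge in \emph{any} colour leaves all classes in $\{r,\dots,r+a\}$ at both endpoints. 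So your route works and needs no clever choice at the reinsertion step, but the paper's pendant-edge trick reaches the same conclusion with no reinsertion bookkeeping at all.
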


\begin{proof} In this case $r < \frac{d}{x} \le \frac{d+s}{x} < r+a$.  At each vertex $x$ where $x|d(v)$, it follows that $r+1 \le \frac{d(v)}{x} \le r+a - 1$.  We form a simple graph $G^+$ from $G$ by joining a pendant edge to each vertex of $G$ satisfying $x|d(v)$.  For each vertex $v$ of the simple graph $G^+$ we have $x\nmid d_{G^+}(v)$, and so $G^+$ has an equitable edge-colouring with $x$ colours, by Theorem \ref{T18}. Restricting this edge-colouring to $G$ gives an edge-colouring of $G$ which is equitable at the vertices $v$ where $x \nmid d(v)$,  and is nearly equitable at the vertices $v$ where $x|d(v)$.  Thus for each pair of colours $\alpha$ and $\beta$,
$$\begin{array}{ll}||\alpha(v)| - |\beta(v)| | \; \le \; 1 & {\rm if} \; x \nmid d(v)\,, \\ ||\alpha(v)| - |\beta(v)| | \; \le \; 2  & {\rm if} \; x\, |\,d(v)\,.\end{array}$$

The average number of edges of each colour at $v$ is exactly $\frac{d(v)}{x}$ if $x \nmid d(v)$. Then $r < \frac{d(v)}{x} < r+a$,  so $r \le \alpha(v) \le r+a$ for each colour $\alpha$.  If $x|d(v)$ then $r+1 \le \frac{d(v)}{x} \le r+a-1$, so again $r \le \alpha(v) \le r+a$ for each colour $\alpha$.  Therefore each colour class is an $(r, r+a)$-factor, and so $G$ has an $(r, r+a)$-factorization with $x$ factors.
\end{proof}

We can now prove another case when Theorem \ref{T12} is true.

\begin{lemma} Theorem \ref{T12}(ii) is true.
\label{L20}\end{lemma}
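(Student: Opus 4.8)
The plan is to obtain both directions of the equivalence from results already in hand, handling the ``if'' and the ``only if'' separately and spending the real effort only on the two boundary values of $x$.

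For the sufficiency I would argue directly from Lemma \ref{L19}. If $x$ is an integer with $\frac{d+s}{r+a} < x < \frac{d}{r}$, then this is exactly the hypothesis of that lemma, so every simple $(d,d+s)$-graph $G$ has an $(r,r+a)$-factorization into $x$ factors. This direction uses neither the parities of $r$ and $a$ nor the threshold on $d$: the strict two-sided inequality is precisely what drives the equitable-colouring argument of Lemma \ref{L19}, since it guarantees $r < \frac{d(v)}{x} < r+a$ at the vertices $v$ with $x \nmid d(v)$ and, after the pendant-edge adjustment, $r+1 \le \frac{d(v)}{x} \le r+a-1$ at the vertices with $x \mid d(v)$, so that each colour class is an $(r,r+a)$-factor.

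For the necessity I would start from Lemma \ref{L13}, which gives the weak inequalities $\frac{d+s}{r+a} \le x \le \frac{d}{r}$ for any $x$ that works for all $(d,d+s)$-graphs, and then rule out equality at each end; this is where the parities enter. Since $r$ is odd, a factorization of a $d$-regular graph into $x=\frac{d}{r}$ factors forces every factor to be an $r$-factor, and since $r+a$ is odd, a factorization of a $(d+s)$-regular graph into $x=\frac{d+s}{r+a}$ factors forces every factor to be an $(r+a)$-factor. Lemma \ref{L14}(ii), together with the $(r+a)$-odd argument used in Lemma \ref{L14}(iii), supplies the parity-obstructed graphs in each case (the ``two copies of $D$ joined by a bridge'' graph when the relevant degree is odd, and an odd-order regular graph of that degree when it is even), so neither endpoint can be attained and the inequalities are strict.

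The step I expect to need the most care is the interaction of the threshold $d > \max(r,\,r+s-a)$ with these boundary constructions. Its purpose is to ensure that whenever $\frac{d}{r}$ or $\frac{d+s}{r+a}$ happens to be an integer it is at least $2$, for only then is the relevant factorization genuinely into several factors, and only then does the averaging forced by regularity make each factor an $r$-factor (respectively an $(r+a)$-factor) as the parity argument requires. Equivalently, the threshold excludes the degenerate possibility $x=1$, in which $G$ would itself be a single $(r,r+a)$-factor and a strict inequality could fail. I would therefore verify, splitting into the parities of $d$ and of $d+s$ exactly as in Lemma \ref{L14}, that under the hypothesis $d > \max(r,\,r+s-a)$ no such degeneracy survives; once this is checked, the combination of Lemmas \ref{L13}, \ref{L14}(ii) and \ref{L19} closes the argument.
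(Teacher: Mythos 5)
Your proposal is correct and takes essentially the same route as the paper, whose entire proof of Lemma \ref{L20} is two sentences: necessity by Lemma \ref{L14}(ii), sufficiency by Lemma \ref{L19}. If anything you are more careful than the paper, since you observe that strictness at the left endpoint $x=\frac{d+s}{r+a}$ needs the $(r+a)$-odd construction of Lemma \ref{L14}(iii) (the written proof of Lemma \ref{L14}(ii) only treats $x=\frac{d}{r}$), and you correctly identify the degree threshold as what excludes the degenerate value $x=1$, at which a $(d,d+s)$-graph would itself be a single $(r,r+a)$-factor.
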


\begin{proof} In this case $r$ is odd and $a$ is even and $d > {\rm max}(r,r+s-a)$. By Lemma \ref{L14}(ii) the condition $\frac{d+s}{r+a} < x < \frac{d}{r}$ is necessary. By Lemma \ref{L19} this condition is sufficient for $G$ to have an $(r,r+a)$-factorization with $x$ factors.
\end{proof}

\section{A lower bound for $\sigma(r,s,a,t)$ (achieved when $r$ is even and $a$ is even)}\label{sec:lowerbound}

In this section we recall that if $r$ and $a$ are both even and positive then any $(d,d+s)$-simple graph has an $(r,r+a)$-factorization with $x$ factors if and only if $\frac{d+s}{r+a} \le x \le \frac{r}{a}$. This was Theorem \ref{T12}(i).

\vspace{0.25cm}
We also have from Lemma \ref{L2} that, for all $r, s, a, t$ with $r, t \ge 1$ and $a, s\ge 0$,
$$ \sigma(r,s,a,t) \ge N(r,s,a,t) \; = \; r \left\lceil \frac{tr+s-1}{a} \right\rceil + (t-1)r\,.$$
We need to show that this lower  bound for $r(r,s,a,t)$ is achieved when $r$ and $a$ are both even. It suffices to prove:

\begin{lemma} Let $r \ge 2$ and $a \ge 2$ both be even. Let $s \ge 0$ and $t \ge 1$.  Then
$$\sigma(r,s,a,t) \le r \left\lceil \frac{tr+s-1}{a} \right\rceil + (t-1)r\,.$$
\label{L21}\end{lemma}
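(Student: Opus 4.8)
The plan is to reduce the lemma to a counting statement and then to exhibit an explicit block of $t$ admissible values of $x$. Since $r$ and $a$ are both even, Theorem~\ref{T12}(i) tells us that a $(d,d+s)$-simple graph $G$ has an $(r,r+a)$-factorization with $x$ factors precisely when $\frac{d+s}{r+a} \le x \le \frac{d}{r}$. Hence it suffices to show that whenever $d \ge d_0 := r\lceil \frac{tr+s-1}{a}\rceil + (t-1)r$, the closed interval $[\frac{d+s}{r+a}, \frac{d}{r}]$ contains at least $t$ integers.

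First I would write $d = rq + \rho$ with $0 \le \rho \le r-1$, so that $\lfloor d/r \rfloor = q$; the hypothesis $d \ge d_0$ then gives immediately $q \ge \lceil \frac{tr+s-1}{a}\rceil + t - 1$. Rather than count the integers in the interval directly (the count need not be monotone in $d$, since the lower endpoint $\frac{d+s}{r+a}$ can jump past an integer at a step where $\frac{d}{r}$ does not), I would instead propose the explicit block of $t$ consecutive integers $q, q-1, \ldots, q-(t-1)$ and verify that every one of them lies in the interval.

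The largest of these, $x = q = \lfloor d/r\rfloor$, automatically satisfies $x \le d/r$, and since the others are smaller this upper bound is clear for the whole block. The only genuine inequality to check is that the smallest member $q-(t-1)$ is at least the lower endpoint, i.e. $(r+a)(q-(t-1)) \ge d+s$, equivalently $aq \ge (r+a)(t-1) + \rho + s$. Writing $m := \lceil \frac{tr+s-1}{a}\rceil$, so that $am \ge tr+s-1$ and $q \ge m + (t-1)$, the left-hand side is at least $am + a(t-1) \ge (tr+s-1) + a(t-1)$; after cancelling $a(t-1)$ and $s$ the required inequality collapses to $r-1 \ge \rho$, which holds by the choice of $\rho$. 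A closing remark confirms positivity: the smallest value satisfies $q-(t-1) \ge m \ge 1$ (using $r\ge2$, $t\ge1$, $s\ge0$, $a\ge2$), so all $t$ exhibited values of $x$ are genuine positive integers.

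The main, and essentially only, obstacle is the arithmetic bookkeeping in that one displayed inequality; everything else is immediate once Theorem~\ref{T12}(i) is in hand. The key idea that makes it go through cleanly is to sidestep the non-monotone integer count and instead pin down the top $t$ integers not exceeding $d/r$, thereby reducing the whole lemma to the single transparent inequality $\rho \le r-1$.
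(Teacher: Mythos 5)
Your proof is correct, and it rests on the same key external input as the paper's proof---Theorem \ref{T12}(i), which reduces the lemma to showing that the closed interval $\left[\frac{d+s}{r+a},\frac{d}{r}\right]$ contains at least $t$ integers whenever $d \ge r\left\lceil\frac{tr+s-1}{a}\right\rceil+(t-1)r$---but your counting argument is genuinely different from, and cleaner than, the paper's. The paper parametrizes the ceiling by writing $d=\frac{r}{a}(tr+s+c)+(t-1)r+k$ with $a\mid tr+s+c$, $-1\le c\le a-2$ and $k\ge 0$, then splits into two cases ($0\le k\le r+c$, and $k=r+c+y$ with $(p-1)(r+a)<y\le p(r+a)$ for $p\ge 1$), in each case exhibiting a block of $t$ integers anchored at $\frac{1}{a}(tr+s+c)$ plus a suitable offset. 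You instead write $d=rq+\rho$ with $0\le\rho\le r-1$ and take the top block $q-(t-1),\dots,q$ of integers not exceeding $d/r$, so the upper bound is automatic and only the lower bound needs work. I checked your arithmetic: $(r+a)\bigl(q-(t-1)\bigr)\ge d+s$ is equivalent to $aq\ge(r+a)(t-1)+\rho+s$, and the chain $aq\ge am+a(t-1)\ge(tr+s-1)+a(t-1)$, with $m=\left\lceil\frac{tr+s-1}{a}\right\rceil$, reduces this exactly to $\rho\le r-1$; your positivity check $q-(t-1)\ge m\ge 1$ is also right and is a point the paper glosses over. What your route buys is brevity and the complete elimination of case analysis, by anchoring the integer block at $\lfloor d/r\rfloor$ rather than at the lower endpoint; what the paper's route buys is an explicit description of the admissible values of $x$ in terms of the residue $c$, a bookkeeping device that it reuses in essentially the same form in the later, harder sections (Theorems \ref{T22}, \ref{T25} and \ref{T26}), whereas your decomposition $d=rq+\rho$ would need to be reworked there because the relevant intervals become half-open or open.
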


\begin{proof}
First note that a number $p$ satisfies
$$p=r \left\lceil \frac{rt+s-1}{a}\right\rceil + (t-1)r$$
if and only if
$$p=r\left(\frac{rt+s+c}{a}\right) + (t-1)r$$
for some integer $c$ such that $a \,|\, tr+s+c$ and $-1\le c \le a-2$.

Let
$$d=\frac{r}{a} (tr+s+c)+(t-1)r+k$$
where $k \ge 0$. We show that, in this case, there do exist $t$ values of $x$ between $\frac{d+s}{r+a}$ and $\frac{d}{r}$. Then it follows from Theorem \ref{T12}(i) that every $(d, d+s)$-simple graph is $(r, r+a)$-factorable into $x$ factors for at least $t$ values of $x$.

It is easy to see that
$$\frac{d}{r} \; = \; \frac{1}{a}\, (tr+s+c)+(t-1) + \frac{k}{r}$$
and that
$$ \frac{d+s}{r+a} \; = \; \frac{1}{a}\, (tr+s+c) - \frac{r+c}{r+a} + \frac{k}{r+a}\,.$$
Therefore if $r+c \ge k \ge 0$ then, since $r+a > r+a - 2 \ge r+c$, the values of $x$ lying between $\frac{d+s}{r+a}~$ and $\frac{d}{r}$ include
$$\frac{1}{a}\,(tr+s+c), \ldots, \frac{1}{a}\, (tr+s+c) + (t-1)\,,$$
so there are at least $t$ such values of $x$.

Next suppose that $k=r+c+y$ where $(p-1)(r+a) < y \le p(r+a)$ and $p \ge 1$. Then
$$\begin{array}{lll} \displaystyle\frac{d+s}{r+a} & = & \displaystyle\frac{1}{a}\, (tr+s+c) + \frac{r+c-k}{r+a} \vspace{0.25cm}\\
& = & \displaystyle\frac{1}{a}\, (tr+s+c) + \frac{y}{r+a} \vspace{0.25cm} \\
& \le & \displaystyle\frac{1}{a}\, (tr+s+c) + p\,\end{array}$$
and
$$\begin{array}{lll} \displaystyle\frac{d}{r} & = & \displaystyle\frac{1}{a}\, (tr+s+c) + (t-1) + \frac{k}{r} \vspace{0.25cm} \\
& = & \displaystyle \frac{1}{a}\, (tr+s+c) + (t-1) + \frac{r+c+y}{r} \vspace{0.25cm} \\
& = & \displaystyle \frac{1}{a}\, (tr+s+c) + t + \frac{c+y}{r} \vspace{0.25cm} \\
& \ge & \displaystyle \frac{1}{a}\, (tr+s+c) +t+\frac{y-1}{r} \vspace{0.25cm} \\
& \ge & \displaystyle \frac{1}{a}\, (tr+s+c)+t+ \frac{(p-1)(r+a)}{r} \vspace{0.25cm} \\
& \ge & \displaystyle \frac{1}{a}\, (tr+s+c)+t+(p-1)\,.\end{array}$$
The integer values of $x$ between $\frac{d+s}{r+a}$ and $\frac{d}{r}$ are
$$\frac{1}{a} \, (tr+s+c) + (p-1) + i$$ 
for $i=1, \ldots, t$. Thus there are at least $t$ such integer values.

\vspace{0.25cm}
So indeed
$$\sigma(r,s,a,t) \le r \left\lceil \frac{tr+s-1}{a}\right\rceil + (t-1)r$$
as asserted.

\end{proof}

\section{An upper bound for $\sigma(r,s,a,t)$ (achieved when $r$ is odd and $a$ is even)}\label{sec:upperbound}

Recall that if $r \ge 1$ is odd and $a \ge 2$ is even and $d > {\rm max}(r, r+a)$, then any $(d, d+s)$-simple graph has an $(r,r+a)$-factorization with $x$ factors if and only if $\frac{d+s}{r+a} < x < \frac{d}{r}$ when $d > {\rm max}(r, r+s-a)$; this was Theorem \ref{T12}(ii).

\vspace{0.25cm}
We first prove the following upper bound for $\sigma(r,s,a,t)$, valid for all $r \ge 2$, $a \ge 2$, $t \ge 1$ and $s \ge 0$.

\begin{theorem} Let $r \ge 1$ and $a \ge 2$, $t \ge 1$ and $s \ge 0$. Then
$$\sigma(r,s,a,t) \; \le \; r \left\lceil \frac{tr+s+1}{a}\right\rceil \; + (t-1)r + 1\,.$$
\label{T22}\end{theorem}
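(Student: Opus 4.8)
The plan is to reduce the whole statement to Lemma~\ref{L19} together with a counting argument about integer points in an interval. Because Lemma~\ref{L19} holds for every positive $r$ and $a$ with no parity hypothesis, it suffices to prove that whenever $d \ge D_0 := r\lceil (tr+s+1)/a\rceil + (t-1)r + 1$, the open interval $\left(\frac{d+s}{r+a},\frac{d}{r}\right)$ contains at least $t$ integers: each such integer $x$ yields an $(r,r+a)$-factorization of $G$ with $x$ factors by Lemma~\ref{L19}, so $G$ is factorable for at least $t$ distinct values of $x$, which is exactly what is required once $d \ge D_0$.

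Write $m = \lceil (tr+s+1)/a\rceil$, so that $ma \ge tr+s+1$ and $D_0 = r(m+t-1)+1$. For a given $d$ I would try the $t$ consecutive integers $\lceil d/r\rceil - t, \lceil d/r\rceil - t+1, \ldots, \lceil d/r\rceil - 1$ (all at least $m \ge 1$, hence genuine numbers of factors). Their upper end is automatically admissible, since the largest of them satisfies $\lceil d/r\rceil - 1 < d/r$ because $\lceil d/r\rceil < d/r + 1$. Thus the entire claim collapses to the single inequality, for all $d \ge D_0$,
\[ \left\lceil \frac{d}{r}\right\rceil - t \; > \; \frac{d+s}{r+a}. \]

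The one genuine obstacle is that $g(d) := \lceil d/r\rceil - t - \frac{d+s}{r+a}$ is not monotone: as $d$ crosses a multiple of $r$ it jumps up by $1 - \frac{1}{r+a}$, and then drifts down by $\frac{1}{r+a}$ at each subsequent step until the next multiple. I would deal with this by locating the minima of $g$. On each block $\{rk+1,\ldots,r(k+1)\}$ the ceiling is constant, so $g$ is decreasing there and attains its minimum at the top $d = r(k+1)$, a multiple of $r$; and for consecutive multiples one finds $g(r(k+1)) - g(rk) = 1 - \frac{r}{r+a} = \frac{a}{r+a} > 0$, so these block-minima increase with $k$. Hence the global minimum of $g$ over all $d \ge D_0$ is attained at the least multiple of $r$ that is at least $D_0$, namely $d^* = r(m+t)$.

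It then remains only to evaluate at $d^*$. Since $\lceil d^*/r\rceil = m+t$,
\[ g(d^*) = m - \frac{r(m+t)+s}{r+a} = \frac{ma - (tr+s)}{r+a}, \]
which is strictly positive exactly because $ma \ge tr+s+1$. Therefore $g(d) > 0$ for every $d \ge D_0$, the interval $\left(\frac{d+s}{r+a},\frac{d}{r}\right)$ contains the desired $t$ integers, and Lemma~\ref{L19} completes the proof. It is precisely this final positivity check that pins down the ceiling in the definition of $D_0$, explaining the exact shape of the bound.
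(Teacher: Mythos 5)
Your proof is correct, and its overall strategy coincides with the paper's: both reduce Theorem \ref{T22} to showing that for every $d \ge r\left\lceil \frac{tr+s+1}{a}\right\rceil+(t-1)r+1$ the open interval $\left(\frac{d+s}{r+a},\frac{d}{r}\right)$ contains at least $t$ integers, and both then convert each such integer into a factorization via the equitable edge-colouring result. Where you differ is in how the counting is executed and in which result you cite for the conversion. The paper writes $d=\frac{r}{a}(tr+s+c)+(t-1)r+k$ with $a\mid tr+s+c$, $1\le c\le a$, $k\ge1$, anchors its candidate integers at $\frac{1}{a}(tr+s+c)$, and splits into three cases according as $k\le r+c-1$, $k=r+c$, or $k>r+c$; you instead take the top $t$ integers below $d/r$, namely $\left\lceil d/r\right\rceil-t,\ldots,\left\lceil d/r\right\rceil-1$, and verify the single inequality $g(d)=\left\lceil d/r\right\rceil-t-\frac{d+s}{r+a}>0$ by locating the minimum of $g$ over $d\ge D_0$ at the smallest multiple of $r$ past the threshold, where it evaluates to $\frac{ma-(tr+s)}{r+a}>0$. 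This is a unified argument with no divisibility parametrization and no case analysis, which is what it buys you; what the paper's version buys is an explicit list of the admissible values of $x$ in each range of $k$, which it reuses later (e.g.\ in Theorems \ref{T25} and \ref{T28}). Your version also repairs a small blemish: the paper justifies the conversion step by appealing to Theorem \ref{T12}(ii), which is stated only for $r$ odd and $a$ even, although Theorem \ref{T22} carries no parity hypothesis; the statement actually needed is the parity-free Lemma \ref{L19}, which is exactly what you invoke.
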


\begin{proof}
Let us first point out that a number $p$ satisfies
$$ p \; = \; r  \left\lceil \frac{tr+s+1}{a} \right\rceil\; + (t-1)r$$
if and only if
$$p \;= \;\frac{r}{a}(tr+s+c)+(t-1)r$$
for some integer $c$ such that
$$a\;|\;tr+s+c$$
and
$$ 1 \le c \le a\,.$$

We show that
$$\sigma(r,s,a,t) \le r \left\lceil \frac{tr+s+1}{a}\right\rceil + (t-1) r+1\,.$$
So we show that
$$\sigma(r,s,a,t) \le \frac{r}{a}(tr+s+c) + (t-1) r+1\,,$$
where $a \; | \; tr+s+c$ when $1 \le c \le a$.

Let $$d \; = \; \frac{r}{a}(tr+s+c)+(t-1)r+k$$
where $k \ge 1$. We show that in this case there do exist at least $t$ integer values of $x$ satisfying $\frac{d+s}{r+a} < x < \frac{d}{r}$. Then it follows by Theorem \ref{T12}(ii) that every $(d, d+s)$-simple graph is $(r,r+a)$-factorable into $x$ factors for at least $t$ values of $x$.

Note that
$$\frac{d}{r} \; = \; \frac{1}{a}(tr+s+c)+(t-1)+\frac{k}{r}$$
and that
$$\begin{array}{lll}d+s & = & \displaystyle \frac{r}{a} (tr+s+c)+(t-1)r+k+s \vspace{0.25cm}\\
& = & \displaystyle \frac{r+a}{a} (tr+s+c)+(t-1)r+k+s-(tr+s+c) \vspace{0.25cm} \\
& = & \displaystyle \frac{r+a}{a}(tr+s+c)-(r+c)+k\,,\end{array}$$
so that
$$\frac{d+s}{r+a} \; = \; \frac{1}{a} (tr+s+c)- \frac{r+c}{r+a} + \frac{k}{r+a}\,.$$

If $1 \le k \le r+c-1$ then the integer values of $x$ satisfying $\frac{d+s}{r+a} < x < \frac{d}{r}$ include
$$ \frac{1}{a} (tr+s +c) + i$$
for $i = 0,1, \ldots, t-1$. Thus there are at least $t$ such integer values of $x$ in this case.

For $r+c=k$ then the integer values of $x$ satisfying $\frac{d+s}{r+a} < x < \frac{d}{r}$ include
$$\frac{1}{a} tr +s+c) +1$$
for $i = 1, 2, \ldots, t$. thus there are at least $t$ such integer values of $x$ in this case.

For $r+c<k$ then the integer values of $x$ satisfying $\frac{d+s}{r+a} < x < \frac{d}{r}$ include
$$\frac{1}{a}(tr+s+c)+i$$
for $i=\left\lfloor\frac{k-r-c}{r+a}\right\rfloor + 1, \ldots, \left\lfloor \frac{k-r-c}{r+a} \right\rfloor + (t-1) + 1$, since

$$\frac{1}{a}(tr+s+c)+\left\lfloor \frac{k-r-c}{r+a}\right\rfloor + t < \frac{d}{r}$$
i.e., $\left\lfloor \frac{k-r-c}{r+a}\right\rfloor + 1 < \frac{k}{r}$, which is true since
$$\left\lfloor \frac{k-r-c}{r+a}\right\rfloor \; \le \; \frac{k-r-c}{r+a} \; < \; \frac{k-r}{r} \; ,$$
so there are at least $t$ such integer values of $i$.

Therefore,
$$\sigma(r,s,a,t) \; \le \; r \left\lceil \frac{tr+s+a}{a}\right\rceil + (t-1)r+1\,.$$

\end{proof}

In the case when $r$ is odd and $a$ is even, so that by Theorem \ref{T12}(ii)
$$\frac{d+s}{r+a} \; < x \; < \frac{d}{r}\;,$$
the upper bound in Theorem \ref{T22} is achieved as we now show.

\begin{theorem} Let $r \ge 1$ be odd, $a \ge 2$ be even. Let $t \ge 1$ and $s \ge 0$. Then
$$\sigma(r,s,a,t) \; = \; \left\{\begin{array}{ll} r\left\lceil \frac{tr+s+1}{a}\right\rceil + (t-1)r+1 & {\rm if}\; t \ge 2 \; {\rm or} \; {\rm if} \; t=1 \; \\
& {\rm and}\; a<r+s+1 \vspace{0.25cm} \\
r & {\rm if} \; t=1 \; {\rm and} \; a \ge r+s+1\,.\end{array} \right.$$
\label{T23}\end{theorem}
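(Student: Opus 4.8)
The plan is to combine the upper bound already established in Theorem \ref{T22} with a matching lower bound, and to treat the exceptional range $t=1,\ a\ge r+s+1$ by a direct argument. Throughout $r$ is odd and $a$ is even, so that $r+a$ is odd as well; this double parity is exactly what drives the construction. Following the bookkeeping in the proof of Theorem \ref{T22}, I would write the candidate threshold as $U=\frac{r}{a}(tr+s+c)+(t-1)r+1$, where $c$ is the integer with $1\le c\le a$ and $a\mid tr+s+c$, and set $d=U-1=\frac{r}{a}(tr+s+c)+(t-1)r$. Putting $m=\frac{tr+s+c}{a}\in\mathbb{Z}$ gives $d=r(m+t-1)$, so $\frac{d}{r}=m+t-1$ and $\frac{d+s}{r+a}=m-\frac{r+c}{r+a}$; hence the open interval $\bigl(\frac{d+s}{r+a},\frac{d}{r}\bigr)$ of Theorem \ref{T12}(ii) contains exactly the $t-1$ integers $m,\dots,m+t-2$ (this holds both when $c<a$, where the left endpoint lies in $(m-1,m)$, and when $c=a$, where it is the excluded integer $m-1$). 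Since Theorem \ref{T22} already gives $\sigma\le U$, it suffices to exhibit one $(d,d+s)$-simple graph that is $(r,r+a)$-factorable for at most $t-1$ values of $x$; this forces $\sigma\ge U$, and equality follows.

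For the witness I would take a disjoint union $G=G_1\sqcup G_2$ of two regular graphs. Let $G_1$ be a $d$-regular simple graph built by the odd-order device of Lemma \ref{L14}(ii): since $r$ is odd, $G_1$ can be arranged so that it has no $(r,r+a)$-factorization with $x=\frac{d}{r}=m+t-1$ factors, because at that value every factor must be exactly $r$-regular and the device forbids decomposing $G_1$ into $r$-factors. As every vertex of $G_1$ has degree $d$, any factorization obeys $x\le\frac{d}{r}$, so $G_1$ is factorable only for $x\le m+t-2$. Let $G_2$ be a $(d+s)$-regular simple graph; its factorizations obey $x\ge\frac{d+s}{r+a}$, whence $x\ge m$ automatically when $c<a$ (the bound is non-integral), while when $c=a$ the value $\frac{d+s}{r+a}=m-1$ is an integer and I would use the odd-order device of Lemma \ref{L14}(iii), which, since $r+a$ is odd, yields a $G_2$ with no $(r+a)$-factorization and hence none with $m-1$ factors, so again $x\ge m$. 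Because a factorization of a disjoint union restricts to a factorization of each component with the same number of factors, $G$ is factorable only for $x\in\{m,\dots,m+t-2\}$, i.e.\ for at most $t-1$ values. The low-$x$ behaviour of $G_1$ and the high-$x$ behaviour of $G_2$ are irrelevant, since we intersect the two feasible sets.

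The exceptional range $t=1,\ a\ge r+s+1$ I would handle directly, the claim being $\sigma=r$. For $d=r-1$ a $(d,d+s)$-graph has a vertex of degree $r-1<r$, which lies in no $(r,r+a)$-factor, so it is not factorable at all; hence $\sigma\ge r$. For the reverse inequality one checks that every $(d,d+s)$-graph with $d\ge r$ is factorable for at least one $x$: when $d\le r+a-s$ the whole graph is a single $(r,r+a)$-factor, since $a\ge r+s+1>s$ forces $[r,r+s]\subseteq[r,r+a]$, and when $d>r+a-s$ the interval $\bigl(\frac{d+s}{r+a},\frac{d}{r}\bigr)$ contains an integer and Lemma \ref{L19} applies. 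This gives $\sigma\le r$, hence equality.

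The step I expect to be the main obstacle is the bottom-boundary obstruction in the degenerate small cases. The odd-order argument blocking $x=\frac{d+s}{r+a}$ requires that value to be at least $2$; when it equals $1$ (for instance $r=1,\ s=0$, where an $(r+a)$-regular graph is itself a valid factor) the boundary is achievable by \emph{every} graph and cannot be blocked, so the count of good values rises and the $+1$ in the formula is lost. Pinning down exactly which quadruples $(r,s,a,t)$ fall into this degenerate regime, and confirming that it is precisely the regime governed by the $t=1,\ a\ge r+s+1$ clause (and by the presence or absence of the $+1$), is the delicate part. It also requires verifying, uniformly across the residue class of $c$, that $G_1$ and $G_2$ genuinely exist as simple graphs of the prescribed degrees and odd orders; this is routine for large $d$ but must be checked at the smallest admissible $d$, where the two extremal constructions and the boundary subtlety interact.
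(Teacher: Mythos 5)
Your skeleton is the paper's own: quote Theorem \ref{T22} for the upper bound, put $d=\frac{r}{a}(tr+s+c)+(t-1)r$ (one below the claimed threshold), compute $\frac{d}{r}=m+t-1$ and $\frac{d+s}{r+a}=m-\frac{r+c}{r+a}$ with $m=\frac{tr+s+c}{a}$, observe that the relevant open interval contains only the $t-1$ integers $m,\dots,m+t-2$, and handle $t=1$, $a\ge r+s+1$ separately by noting that a graph of degree $r$ is its own factor. Where you genuinely differ is that you construct a single witness graph $G_1\sqcup G_2$ whose feasible set is contained in the open interval, using the odd-order devices of Lemma \ref{L14} on each regular component. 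This is an improvement in rigour over the paper, which merely asserts that ``it suffices to show that there do not exist $t$ integer values of $x$'' in the open interval; that assertion tacitly appeals to the necessity half of Theorem \ref{T12}(ii), which only yields, for each bad $x$, \emph{some} graph failing at that $x$, whereas the definition of $\sigma$ requires one $(d,d+s)$-graph failing at all bad values simultaneously. Your disjoint union is exactly the missing object.

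However, the obstacle you flag in your closing paragraph is a genuine gap, and your hope that it coincides with the $t=1$, $a\ge r+s+1$ clause is false. The degenerate regime is $a=tr+s$ (equivalently $c=a$ and $m=2$), which lies \emph{inside} the theorem's main clause both when $t=1$ (then $a=r+s<r+s+1$) and when $t\ge2$. There $d=(t+1)r$ gives $d+s=r+a$ exactly, so the bottom endpoint is $1$ and cannot be blocked by any construction: every $(d,d+s)$-graph has all its degrees in $[r,r+a]$ and is therefore its own single $(r,r+a)$-factor. So not only does your $G_2$ fail to exist; no witness exists at all, since every such graph admits the $t$ feasible values $x=1$ (itself) and $x=2,\dots,t$ (by Lemma \ref{L19}), whence $\sigma\le(t+1)r$ and the stated formula, which claims $\sigma=(t+1)r+1$, is wrong. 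Concretely, for $(r,s,a,t)=(1,1,2,1)$ the formula gives $\sigma=3$, yet every $(1,2)$-graph and every $(2,3)$-graph is its own $(1,3)$-factor and Lemma \ref{L19} disposes of all $d\ge3$, so in truth $\sigma=1$; for $(r,s,a,t)=(1,0,2,2)$ the formula gives $4$, yet every $3$-regular simple graph has the two feasible values $x=1,2$, so $\sigma=3$. The same defect sits inside the paper's own proof: its appeal to Theorem \ref{T12}(ii) breaks down precisely when $\frac{d+s}{r+a}=1$, where the ``only if'' direction of that theorem is itself false. So your instinct located a real error in the statement, not merely a hole in your argument; a correct version must treat $a=tr+s$ separately (for $t=1$ the clause boundary should be $a\ge r+s$ rather than $a\ge r+s+1$, and then $\sigma=r$ there), and your proof, once this regime is carved out, is sound for all remaining parameter values.
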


\begin{proof}
From Theorem \ref{T22} we already know that
$$\sigma(r,s,a,t) \; \le \; r \left\lceil \frac{tr+s+1}{a}\right\rceil + (t-1) r+1\,.$$
First assume that $t \ge 2$, or $t=1$ and $a < r+s+1$.  The equation
$$p \; = \; r \left\lceil \frac{tr+s+1}{a} \right\rceil + (t-1)r+1$$
is true if and only if
$$ p \; = \; r \left(\frac{tr+s+c}{a}\right) + (t-1)r+1$$
for some integer $c$ such that $a \; | \; tr+s+c$ and $1 \le c \le a$.

We first show that if
$$d \; = \; r \left\lceil \frac{tr+s+1}{a} \right\rceil + (t-1)r$$
and $d \ge {\rm max} \{r, r+a - s\}$ then there is an example of a $(d,d+s)$-simple graph $G$ which does not have an $(r,r+a)$-factorization with $x$ factors for $t$ different values of $x$. It suffices to show that there do not exist $t$ integer values of $x$ satisfying
$$ \frac{d+s}{r+a}\; < \; x \; < \; \frac{d}{r}\,.$$
So suppose that
$$d\; = \; \frac{r}{a}(tr+s+c)+(t-1)r$$
where $a \, | \, tr+s+c$ and $1 \ge c \ge a$.  Then
$$\frac{d}{r} \; = \; \frac{1}{a}(tr+s+c)+(t-1)$$
and
$$\begin{array}{lll}d+s & = & \displaystyle\frac{r}{a}\, (tr+s+c)+(t-1)r+s \vspace{0.25cm} \\
& = & \displaystyle \frac{r+a}{a}\, (tr+s+c)+(t-1)r+s-(tr+s+c) \vspace{0.25cm} \\
& = & \displaystyle \frac{r+a}{a}\, (tr+s+c)-(r+c) \end{array}$$
so that
$$\frac{d+s}{r+a} \; = \; \frac{1}{a}\, (tr+s+c)- \frac{r+c}{r+a} \,.$$
The integer values of $x$ which satisfy $\frac{d+s}{r+a} < x < \frac{d}{r}$ are
$$\frac{1}{a}\, (tr+s+c)+ i $$
for $i=0, 1, \ldots, t-2$, giving only $t-1$ values altogether. Therefore
$$\sigma(r,s,a,t) \ge \frac{r}{a}\, (tr+s+c)+(t-1)r+1$$
where $a \, | \,tr+s+c$ and $ 1 \le c \le a$. In other words
$$\sigma(r,s,a,t) \, \ge \, r \left\lceil\frac{tr+s+1}{a}\right\rceil + (t-1)r+1\,.$$
In view of Theorem \ref{T22}, it now follows that
$$\sigma(r,s,a,t) \; = \; r \left\lceil\frac{tr+s+1}{a} \right\rceil + (t-1)r+1\,.$$

Secondly suppose that $t=1$ and that $a \ge r+s+1$. Then Theorem \ref{T22} tells us that $\sigma(r,s,a,t) \le r+1$ in this case. But if $d=r$ then a $(d,d+s)$-graph is already an $(r,r+a)$ factor, so that $\sigma(r,s,a,t)=r$.

\end{proof}

\section{Some preliminary remarks before the two remaining cases (where $a$ is odd)}\label{sec:preliminaryremarks}

From Lemma \ref{L2} and Theorem \ref{T22} it follows that for all $r, s, a, t$ with $r \ge 1$, $a \ge 2$, $t \ge 1$, $s \ge 0$
$$ r \left\lceil \frac{rt+s-1}{a} \right\rceil + (t-1)r \; \le \; \sigma(r,s,a,t) \; \le \; r \left\lceil\frac{rt+s+1}{a} \right\rceil + (t-1) r + 1\,.$$
Thus $\sigma(r,s,a,t)$ is already tightly bounded. We also note that we have not proved so far Theorem \ref{T12}(iii) and Theorem \ref{T12}(iv) which would tell us that
$$\frac{d+s}{r+a} \; < \; x \; \le \; \frac{d}{r} \;\;\;\; {\rm if} \; r \; {\rm is \; even \; and} \; a \; {\rm is\; odd \; and} \; d > {\rm max}(r,r+s-a)\,,$$
and
$$\frac{d+s}{r+a} \; \le \; x \; < \; \frac{d}{r} \;\;\;\;{\rm if} \; r \; {\rm and} \;a\; {\rm are \; both\; odd \; and}\; d>r\,. \hspace{3.2cm}$$
It seems to be quite hard to provide a direct proof of these inequalities, from which the bounds in \ref{T11}(iii) and \ref{T11}(iv) would follow by the same arguments as were used in Section \ref{sec:upperbound}. Instead we have it seems to finesse \ref{T12}(iii) and \ref{T12}(iv) by what might seem to be rather roundabout arguments.

\section{The threshold number $\sigma(r,s,a,t)$ when $r$ is even and $a$ is odd}\label{sec:threshold}

In this section we prove:

\vspace{0.25cm}
\noindent{\bf Theorem \ref{T12}(iii)}.  Let $r \ge 2$ be even and $a \ge 1$ be odd. Then every $(d,d+s)$-simple graph $G$ has an $(r,r+a)$-factorization with $x$ factors if and only if
$$\frac{d+s}{r+a} \; < \; x \; \le \; \frac{d}{r}\,.$$
Note that in Lemma \ref{L14} we proved the necessity of this condition; namely we showed that if $r$ is even and $a$ is odd, and if every $(d, d+s)$-simple graph has an $(r,r+a)$-factorization with $x$ factors, then $x$ satisfies the inequality above. So it remains to prove the sufficiency.

We also prove in this section:

\vspace{0.25cm}
\noindent{\bf Theorem \ref{T11}(iii)}. Let $ r \ge 2$ be even and $ a \ge 1$ be odd.

\noindent Then
$$\sigma(r,s,a,t) \; = \; r \left\lceil \frac{rt + s}{a} \right\rceil + (t-1)r\,.$$

We start by improving very slightly the lower bound for $ \sigma(r,s,a,t)$ given by Lemma \ref{L2}. We prove

\begin{theorem} Let $r$ be even, $r \ge 2$, and $a \ge 1$ be odd. Let $t$ be a positive integer and $s$ a non-negative integer. Then
$$ \sigma(r,s,a,t) \; \ge \; r \left\lceil\frac{tr+s}{a} \right\rceil + (t-1)r\,.$$
\label{T24}\end{theorem}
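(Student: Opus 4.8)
The plan is to establish this lower bound by exhibiting, for $d$ one below the claimed threshold, a single $(d,d+s)$-simple graph that admits an $(r,r+a)$-factorization for fewer than $t$ values of $x$. Writing $M = r\lceil (tr+s)/a\rceil + (t-1)r$, the definition of $\sigma$ shows that $\sigma(r,s,a,t)\ge M$ as soon as there is one $(d,d+s)$-simple graph with $d = M-1$ whose set of admissible values of $x$ has size at most $t-1$; here ``admissible'' means only that the necessary degree conditions of Lemma \ref{L13} (and, where available, the sharpening in Lemma \ref{L14}(iii)) permit a factorization with that many factors.

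First I would do the arithmetic. Put $m = \lceil (tr+s)/a\rceil$ and write $m = (tr+s+c)/a$, where $c$ is the unique integer with $a \mid tr+s+c$ and $0\le c\le a-1$, so that $d = M-1 = \frac{r}{a}(tr+s+c) + (t-1)r - 1$. A short computation gives
$$\frac{d}{r} = m + (t-1) - \frac{1}{r}, \qquad \frac{d+s}{r+a} = m - \frac{r+c+1}{r+a}.$$
Since $0\le c\le a-1$ forces $\frac{r+c+1}{r+a}\in(0,1]$, the left endpoint lies in $[m-1,m)$, being the integer $m-1$ exactly when $c = a-1$; and $0<\frac1r<1$ makes the largest integer not exceeding $\frac dr$ equal to $m+t-2$. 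Hence the integers $x$ with $\frac{d+s}{r+a} < x \le \frac dr$ are exactly $m, m+1,\ldots, m+t-2$, which is $t-1$ values.

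The construction I would use is a disjoint union $G = A \sqcup G_0$, where $A$ is any $d$-regular simple graph and $G_0$ is the $(d+s)$-regular graph built in the proof of Lemma \ref{L14}(iii) (two copies of a suitable near-complete graph joined by a cut edge). Both $A$ and $G_0$ are $(d,d+s)$-simple graphs, so $G$ is one too, and any $(r,r+a)$-factorization of $G$ restricts to one of $A$ and one of $G_0$ with the same number of factors. Counting factor-degrees at a vertex of $A$ forces $x\le d/r$, while at a vertex of $G_0$ it forces $\frac{d+s}{r+a}\le x\le \frac{d+s}{r}$; and when $\frac{d+s}{r+a}$ is an integer the odd-$(r+a)$-factor/cut-edge parity argument of Lemma \ref{L14}(iii) (valid since $r+a$ is odd) rules out $x = \frac{d+s}{r+a}$, upgrading this to the strict inequality $\frac{d+s}{r+a} < x$. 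Thus every admissible $x$ for $G$ satisfies $\frac{d+s}{r+a} < x \le \frac dr$, of which there are only $t-1$, whence $\sigma(r,s,a,t)\ge d+1 = M$.

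The main obstacle is the boundary case $c = a-1$ (equivalently $tr+s\equiv 1\ ({\rm mod}\, a)$), which is precisely the case in which $M$ strictly exceeds the trivial bound $N(r,s,a,t)$ of Lemma \ref{L2}: here $\frac{d+s}{r+a} = m-1$ is an integer, and without the parity exclusion the count would be $t$ rather than $t-1$, so the whole improvement hinges on the $G_0$-component and its cut-edge argument (the role of the $A$-component being only to cap the range at $d/r$, which $G_0$ alone does not do). Two minor points must also be checked: that the parity argument applies, which requires $\frac{d+s}{r+a}\ge 2$ (the sporadic degenerate cases, essentially when $a=1$ and $tr+s$ is tiny, being subsumed by Theorem \ref{T9}), and that a $d$-regular simple graph $A$ exists for the relevant $d$, which is routine once $A$ is permitted enough vertices of the correct parity.
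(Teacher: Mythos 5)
Your main argument is the paper's own proof made more rigorous. The paper's proof of Theorem \ref{T24} consists exactly of your first step: it takes $d = \frac{r}{a}(tr+s+c)+(t-1)r-1$ with $a \mid tr+s+c$ and $0\le c\le a-1$, computes the same two endpoints $\frac{d}{r}$ and $\frac{d+s}{r+a}$, and counts the same $t-1$ integers in $\bigl(\frac{d+s}{r+a},\frac{d}{r}\bigr]$ --- and then it simply concludes the bound. It never exhibits a graph: it implicitly appeals to the necessity half of Theorem \ref{T12}(iii) (i.e.\ Lemma \ref{L14}), which, as you correctly observe, produces a \emph{different} counterexample graph for each excluded value of $x$, whereas the definition of $\sigma$ (the admissible values of $x$ may depend on the graph, cf.\ the degree-$27$ example in Section \ref{sec:introduction}) requires a \emph{single} $(d,d+s)$-graph whose whole admissible set has size at most $t-1$. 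Your disjoint union $A\sqcup G_0$, with the $d$-regular component capping $x\le \frac{d}{r}$ and the cut-edge component excluding $x=\frac{d+s}{r+a}$ when that is an integer, is precisely the missing witness; on this point your write-up is stronger than the paper's.

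The genuine gap is in your last paragraph. The parity exclusion is needed only when $c=a-1$, in which case $\frac{d+s}{r+a}=\lceil(tr+s)/a\rceil-1$; so the argument fails exactly when $c=a-1$ and $\lceil(tr+s)/a\rceil=2$, i.e.\ when $tr+s=a+1$. These cases are not confined to $a=1$ and are not subsumed by Theorem \ref{T9}: consider $(r,s,a,t)=(2,2,3,1)$ or $(2,0,3,2)$. Moreover, no repair is possible there, because the stated bound is then false: $tr+s=a+1$ forces $d+s=(t+1)r+s-1=r+a$ for $d=M-1$ (writing $M$ for the claimed bound, as you do), so every $(d,d+s)$-simple graph has all degrees in $[r,r+a]$ and is itself an $(r,r+a)$-factor. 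Hence $x=1$ is admissible for every such graph, your $G$ included; Lemma \ref{L19} supplies $x=2,\dots,t$ as further admissible values since $1<x<t+1-\frac{1}{r}$, and the paper's Theorems \ref{T25} and \ref{T26} cover all $d\ge M$, so granting the paper's own sufficiency results one gets $\sigma(r,s,a,t)\le M-1$. This defect is inherited from the paper --- its proof silently makes the same parity exclusion, and, unlike Theorems \ref{T23} and \ref{T29}, Theorem \ref{T24} carves out no degenerate subcase --- but your dismissal of it via Theorem \ref{T9} is incorrect and should be replaced by an explicit exclusion of (or separate treatment of) the case $tr+s=a+1$.
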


\begin{proof}
First let us remark that a number $p$ satisfies
$$p\; = \; r \left\lceil \frac{tr+s}{a}\right\rceil + (t-1)r$$
if and only if
$$p \; = \; \frac{r}{a}(tr+s+c)+(t-1)r$$
for some integer $c$ such that
$$a \, | \, tr+s+c$$
and
$$0 \le c \le a - 1\,.$$

Suppose that an integer $d$ satisfies
$$d \; = \; \frac{r}{a}(tr+s+c) + (t-1)r-1$$
where
$$a \, |\, tr+s+c$$
and
$$0 \le c \le a - 1\,.$$
Then
$$\frac{d}{r} \; = \; \frac{1}{a}(tr+s+c)+(t-1) - \frac{1}{r}$$
and
$$\frac{d+s}{r+a} \; = \frac{1}{a}(tr+s+c) - \frac{r+c+1}{r+a}\,,$$
since
$$\begin{array}{lll} d+s & = & \displaystyle \frac{r+a}{a}\,(tr+s+c)+(t-1)r-1-(tr+s+c)+s \vspace {0.25cm}\\
& = & \displaystyle\frac{r+a}{a}\,(tr+s+c)-r-c-1\,. \end{array}$$

Since $c+a \le a$ it follows that the integer values of $x$ which satisfy $\frac{d+s}{r+a} < x \le \frac{d}{r}$ are
$$\frac{1}{a}(tr+s+c)+j$$
for $j=0, 1, \ldots, t-2$, so there are fewer than $t$ such values of $x$. So it follows that if there are at least $t$ such values of $x$ then
$$d \ge \frac{r}{a}(tr+s+c)+(t-1)r\,,$$
so that
$$ d\ge \left\lceil \frac{tr+s}{a}\right\rceil + (t-1)r\,.$$
Consequently
$$\sigma(r,s,a,t) \ge r \left\lceil \frac{tr+s}{a}\right\rceil + (t-1)r$$
when $r$ is even and $a$ is odd.

\end{proof}

Next we lower the upper bound for $\sigma(r,s,a,t)$ obtained in Theorem \ref{T22}, this lowering being valid for the case when $r \ge 2$ even, $a \ge 1$ odd. We also show that there are $t$ values of $x$ satisfying $\frac{d+s}{r+a} < x \le \frac{d}{r}$.

\begin{theorem} Let $r$ be even, $r \ge 2$, and $ a \ge 1$ be odd. Let $t$ be a positive integer and $s$ a non-negative integer. Then
$$r \left\lceil \frac{tr+s}{a}\right\rceil + (t-1)r \; \le \; \sigma (r,s,a,t) \; \le \; r \left\lceil \frac{tr+s+1}{a}\right\rceil + (t-1)r\,;$$
furthermore if $d \ge \sigma(r,s,a,t)$ then there are $t$ values of $x$ satisfying $\frac{d+s}{r+a} < x \le \frac{d}{r}$.
\label{T25}\end{theorem}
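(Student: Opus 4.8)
The plan is to read off the lower bound directly from Theorem~\ref{T24}, and to obtain the upper bound by sharpening the estimate of Theorem~\ref{T22} from $r\lceil\frac{tr+s+1}{a}\rceil+(t-1)r+1$ down to $U:=r\lceil\frac{tr+s+1}{a}\rceil+(t-1)r$. Writing $U=\frac{r}{a}(tr+s+c)+(t-1)r$ with $a\mid tr+s+c$ and $1\le c\le a$, one sees at once that $r\mid U$, so $x=\frac{U}{r}=M+t-1$ is an integer, where $M=\frac{tr+s+c}{a}$. For every $d=U+k$ with $k\ge 1$ the open interval $(\frac{d+s}{r+a},\frac{d}{r})$ already contains at least the $t$ integers needed, exactly as in the proof of Theorem~\ref{T22}, and Lemma~\ref{L19} converts each into an $(r,r+a)$-factorization. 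Hence the only content beyond Theorem~\ref{T22} is the single boundary case $d=U$ (i.e.\ $k=0$): there the open interval supplies only the $t-1$ values $M,\dots,M+t-2$, and the missing $t$-th value is forced to be the right endpoint $x=\frac{d}{r}=M+t-1$.

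So the crux is an endpoint statement: every $(d,d+s)$-simple graph $G$ with $r\mid d$ has an $(r,r+a)$-factorization into $x=\frac{d}{r}$ factors (at least for $d=U$). At every vertex of degree exactly $d$ the average degree per factor is precisely $r$, so each factor must have degree exactly $r$ there; this total absence of slack is exactly why Lemma~\ref{L19} fails here (its pendant-edge device can depress a colour to degree $r-1$ at such a vertex) and why Theorem~\ref{T18} cannot be invoked directly, its non-adjacency hypothesis breaking down as soon as two vertices of degree $d$ are adjacent. My first move would be to reduce to the both-even case: since $a$ is odd, $a-1$ is even, and any $(r,r+(a-1))$-factor is \emph{a fortiori} an $(r,r+a)$-factor, so by Theorem~\ref{T12}(i) (established in Lemma~\ref{L16}) the endpoint factorization exists whenever $\frac{d+s}{r+a-1}\le\frac{d}{r}$, i.e.\ whenever $d\ge\frac{rs}{a-1}$. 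A short computation shows that this covers $d=U$ precisely when $s\le(a-1)\bigl((tr+1)+(t-1)a\bigr)$, which disposes of all but the wide degree windows.

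The main obstacle is therefore the endpoint when $s$ is large (the interval $\{d,\dots,d+s\}$ is wide), where neither the $a\mapsto a-1$ reduction nor the dual $a\mapsto a+1$ reduction with a near-equitable bound survives, since both hinge on $s$ being small compared with $x=\frac{d}{r}$. Here I would pass to an augmented graph: adjoin to $G$ a perfect matching $m$ of its complement to form a simple $(d+1,(d+1)+s)$-graph $G'$, for which $x=\frac{d}{r}$ now lies strictly inside $(\frac{d+s+1}{r+a},\frac{d+1}{r})$ (the left inequality holding because $d=U>\frac{r(s+1)}{a}$), so Lemma~\ref{L19} yields an $(r,r+a)$-factorization of $G'$. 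Deleting $m$ recovers $G$, and the only thing to check is that at each vertex the factor carrying its $m$-edge has degree at least $r+1$ there, so that deletion does not drop a colour below $r$. At a vertex of degree $d$ this pins the $m$-edge into the unique factor of degree $r+1$, and arranging this simultaneously at every vertex—by choosing $m$ and the colouring compatibly, or by a Kempe-type exchange along $m$-alternating paths—is the delicate step, and I expect it to be where the real work (and the ``roundabout'' character the paper warns of) lies. Finally, the ``furthermore'' clause is then immediate: once $\sigma\le U$ is known, every $d\ge\sigma$ admits $(r,r+a)$-factorizations for at least $t$ values of $x$, and by Lemma~\ref{L14}(iii) each such $x$ satisfies $\frac{d+s}{r+a}<x\le\frac{d}{r}$.
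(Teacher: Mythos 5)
Your arithmetic is exactly the paper's proof: quote the lower bound from Theorem~\ref{T24}, rerun the computation from the proof of Theorem~\ref{T22} with $k\ge 0$ instead of $k\ge 1$, and observe that in the one new case $k=0$ the right endpoint $x=\frac{d}{r}$ is an integer, so the half-open interval $\bigl(\frac{d+s}{r+a},\frac{d}{r}\bigr]$ contains $t$ integers rather than $t-1$. Where you part company with the paper is on what that count is worth. The paper stops at the count: it concludes $\sigma(r,s,a,t)\le r\lceil\frac{tr+s+1}{a}\rceil+(t-1)r$ directly from ``there are $t$ values of $x$'', without ever exhibiting a factorization with $x=\frac{d}{r}$ factors, and then \emph{derives} Theorem~\ref{T12}(iii) from Theorem~\ref{T25} in the following paragraph --- so the endpoint existence statement is never independently established anywhere in the paper. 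You correctly saw that Lemma~\ref{L19} covers only the open interval, and that the boundary case $d=U:=r\lceil\frac{tr+s+1}{a}\rceil+(t-1)r$, $x=\frac{d}{r}$, is the entire content of the theorem beyond Theorem~\ref{T22}. On this point your proposal is more careful than the source it is being compared against.

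However, your proposal does not close that gap, and there is reason to believe it cannot be closed in the stated generality. Your $a\mapsto a-1$ reduction is sound where it applies (for $a\ge 3$ it yields the endpoint factorization whenever $d\ge\frac{rs}{a-1}$, hence for $s\le(a-1)(tr+1+(t-1)a)$), and this is a genuine step beyond the paper; but it is vacuous for $a=1$, and the matching-augmentation argument for large $s$ is not a proof. Lemma~\ref{L19} applied to $G'=G+m$ gives no control over which factor carries a given edge of $m$: at a vertex of $G$-degree $d=xr$, exactly one factor of $G'$ has degree $r+1$ there, and the $m$-edge must land in that one factor simultaneously at both of its endpoints --- a global constraint of the same order of difficulty as the endpoint theorem itself; no off-the-shelf Kempe-chain machinery exists for degree-constrained factorizations. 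Worse, the endpoint claim is actually false at $a=1$: for $s\ge 2$ the upper bound of Theorem~\ref{T25} reads $\sigma(r,s,1,t)\le tr^2+tr+sr$, while Theorem~\ref{T9} (Hilton, 2009) gives $\sigma(r,s,1,t)=tr^2+tr+sr+1$. So there exist $(d,d+s)$-graphs with $\frac{d+s}{r+1}<\frac{d}{r}=x$ and no $(r,r+a)$-factorization into $x$ factors, which refutes both your intended endpoint lemma and the theorem as stated when $a=1$, $s\ge 2$. In short: your identification of the missing step is right and your partial fix is correct, but the proposal is incomplete at exactly the point where the paper's own proof is silent, and at $a=1$ that hole cannot be patched by any argument.
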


\begin{proof} The earlier upper bound was established in Theorem \ref{T22}. We make progress by examining the proof of Theorem \ref{T22} in  more detail.

We assumed that $d = \frac{r}{a} (tr+s+c) + (t-1)r + k$, where $a \, | \, tr+s+c$ and $1 \le c \le k$, and $k \ge 1$. Then 
$$\frac{d}{r} \; = \; \frac{1}{a} (tr+s+c) + (t-1)+\frac{k}{r}$$
and
$$\frac{d+s}{r+a} \; = \; \frac{1}{a}(tr+s+c) - \frac{r+c}{r+a} + \frac{k}{r+a}\,.$$
Then, for $k \ge 1$, the number of values of $x$ satisfying
$$\frac{d+s}{r+a} \; < \; x \; < \; \frac{d}{r}$$
is at least $t$. If $k=0$ there are only $t-1$ such values of $x$. But in this case
$$\frac{d}{r} \; = \; \frac{1}{a}(tr+s+c)+(t-1)$$
and
$$\frac{d+s}{r+a} \; = \; \frac{1}{a}(tr+s+c) - \frac{r+c}{r+a}\,,$$
and the values of $x$ satisfying
$$\frac{d+s}{r+a} \; < \; x \; \le \; \frac{d}{r}$$
(with $\frac{d}{r} = x$ now being allowed) are 
$$\frac{1}{a}\,(tr+s+c) + i \quad {\rm for} \;\; i=0, 1, \ldots, t-1$$
so there are $t$ values of $x$ in this case. Thus in every case, there are at least $t$ values of $x$ satisfying
$$\frac{d+s}{r+a} \; < x \; \le \frac{d}{r}\,.$$
It follows that
$$\sigma(r,s,a,t) \; \le \; r \left\lceil \frac{tr+s+1}{a}\right\rceil + (t-1)r\,.$$
\end{proof}

In Theorem \ref{T25} we showed that if $G$ is a $(d,d+s)$-simple graph with $d \ge r \left\lceil\frac{tr+s}{a}\right\rceil + (t-1)r$ then at least $t$ values of $x$ satisfy $\frac{d+s}{r+a} < x \le \frac{d}{r}$. In particular, if $d \ge r \left\lceil \frac{r+s}{a}\right\rceil$, then every $(d,d+s)$-simple graph $G$ has an $(r,r+a)$-factorization with $x$ factors if $\frac{d+s}{r+a} < x \le \frac{d}{r}$ in the case when $r \ge 2$ is even and $a \ge 1$ is odd. Taken together with the necessity part of Theorem \ref{T12}(iii), and Lemma \ref{L14} this proves:

{\bf Theorem \ref{T12}(iii)}. Let $ r\ge 2$ be even, $a \ge 1$ be odd, and let $ s \ge 0$. Then every $(d,d+s)$-simple graph $G$ has an $(r,r+a)$-factorization with $x$ factors, where $x$ is an integer, if and only if
$$\frac{d+s}{r+a} \; < \; x \; \le \; \frac{d}{r}\,.$$

We finally turn to the proof of the equality
$$\sigma(r,s,a,t)\; =\; r \left\lceil \frac{tr+s}{a} \right\rceil + (t-1) r$$
when $r \ge 2$ is even and $a \ge 1$ is odd. There is more than one way of proving this at this point, but we want to show that Theorem \ref{T12}(iii) implies Theorem \ref{T11}(iii).

\begin{theorem} Let $r$ be even, $r \ge 2$, and let $a \ge 1$ be odd. Let $t \ge 1$ and $s \ge 0$ be integers. Then 
$$\sigma(r,s,a,t) \; = r \left\lceil \frac{tr+s}{a}\right\rceil + (t-1)r\,.$$
\label{T26} \end{theorem}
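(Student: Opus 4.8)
The lower bound $\sigma(r,s,a,t) \ge r\lceil (tr+s)/a\rceil + (t-1)r$ is exactly Theorem \ref{T24}, so the entire task is to prove the matching upper bound
$$\sigma(r,s,a,t) \; \le \; r\left\lceil \frac{tr+s}{a}\right\rceil + (t-1)r \; =: \; d_0\,.$$
Since $r$ is even and $a$ is odd, Theorem \ref{T12}(iii) says that a $(d,d+s)$-simple graph has an $(r,r+a)$-factorization with $x$ factors precisely when $\frac{d+s}{r+a} < x \le \frac{d}{r}$. Hence the upper bound is equivalent to the purely arithmetic claim that for \emph{every} integer $d \ge d_0$ the half-open interval $\left(\frac{d+s}{r+a}, \frac{d}{r}\right]$ contains at least $t$ integers. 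The plan is to establish this counting statement and then read off the conclusion via Theorem \ref{T12}(iii), since $\ge t$ admissible values of $x$ means $\ge t$ factorizations.

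Theorem \ref{T25} already sandwiches $\sigma(r,s,a,t)$ between $d_0$ and $r\lceil (tr+s+1)/a\rceil + (t-1)r$, and these two ceilings agree unless $a \mid tr+s$. So when $a \nmid tr+s$ the two bounds both equal $d_0$ and there is nothing left to do: the single admissible $c \in \{1,\dots,a\}$ then gives base value exactly $d_0$, and letting the offset $k \ge 0$ vary covers all $d \ge d_0$, so the counting statement is precisely what the proof of Theorem \ref{T25} (with its closed right endpoint) already delivers. I would therefore isolate the one outstanding case $a \mid tr+s$, where Theorem \ref{T25} yields only $\sigma \le d_0 + r$ and a gap of $r$ must be closed.

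For that case I would rerun the endpoint computation of Theorem \ref{T25}, now with $c = 0$ (legitimate because $a \mid tr+s$). Writing $M = (tr+s)/a \in \mathbb{Z}$ and $d = d_0 + k$ with $k \ge 0$, one finds
$$\frac{d}{r} = M + (t-1) + \frac{k}{r}\,, \qquad \frac{d+s}{r+a} = M - \frac{r-k}{r+a}\,,$$
so the interval is $\left(M - \frac{r-k}{r+a},\, M + (t-1) + \frac{k}{r}\right]$. A short floor estimate, splitting $k = rq + \rho$ with $0 \le \rho \le r-1$, shows the number of integers it contains equals $t - 1 + q - \lfloor (k-r)/(r+a)\rfloor$, and since $\lfloor (k-r)/(r+a)\rfloor \le q-1$ for all $k \ge 0$ this is always $\ge t$. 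The crucial and delicate point — the main obstacle — is the right endpoint: when $a \mid tr+s$ (and more generally whenever $r \mid k$) the value $x = d/r$ is itself an integer, and it is only by counting it, via the closed endpoint supplied by Theorem \ref{T12}(iii), that the tally reaches $t$ rather than $t-1$. Once this estimate is in hand, every $d \ge d_0$ yields at least $t$ admissible values of $x$, giving $\sigma(r,s,a,t) \le d_0$ and hence, together with Theorem \ref{T24}, the claimed equality.
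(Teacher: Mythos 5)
Your proposal is correct and takes essentially the same approach as the paper: both deduce the upper bound from Theorem \ref{T12}(iii) by showing that for every $d \ge r\left\lceil\frac{tr+s}{a}\right\rceil + (t-1)r$ the half-open interval $\left(\frac{d+s}{r+a},\frac{d}{r}\right]$ contains at least $t$ integers, with the closed right endpoint $x=\frac{d}{r}$ being precisely what saves the count when $a \mid tr+s$. The only difference is organizational: the paper runs a single unified computation for the residue $c \in \{0,\dots,a-1\}$, exhibiting the integers $\frac{1}{a}(tr+s+c)+i$ for $i=p,\dots,p+t-1$ when $pr \le k < (p+1)r$, whereas you delegate the case $a \nmid tr+s$ to Theorem \ref{T25} and carry out the floor-function count only for $c=0$.
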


\begin{proof}
By Theorem \ref{T25}, 
$$ \sigma(r,s,a,t) \; \ge \; \left\lceil \frac{tr+s}{a}\right\rceil + (t-1)r\,.$$
So we need to show that
$$\sigma(r,s,a,t) \; \le \; \frac{r}{a}(tr+s+c) + (t-1)r\,,$$
where, as in the proof of Theorem \ref{T24}, 
$$ a\, | \, tr+s+c$$
and
$$ 0 \le c \le a - 1\,.$$

Let
$$ d\; = \; \frac{r}{a}(tr+s+c)+(t-1)r+k$$
where $k \ge 0$.  We show that there exist $t$ integer values of $x$ satisfying
$$\frac{d+s}{r+a} \; < x \; \le \frac{d}{r}\,.$$
Then it follows by the definition of $\sigma(r,s,a,t)$ that every $(d,d+s)$-simple graph is $(r,r+a)$-factorable into $x$ factors for at least $t$ integer values of $x$.

\vspace{0.25cm}
First we note that 
$$\frac{d}{r} \; = \; \frac{1}{a}(tr+s+c)+(t-1)+\frac{k}{r}$$
and
$$\frac{d+s}{r+a} \; = \; \frac{1}{a}(tr+s+c)+\frac{k-r-c}{r+a}\;.$$

\vspace{0.25cm}
\noindent For $p$ a non-negative integer, if $pr \le k < (p+1)r$ then $\frac{k}{r} \ge p$ and
$$\frac{k-r-c}{r+a} \; < \; \frac{(p+1)r-r-c}{r+a} \;= \; \frac{pr-c}{r+a} \; \le \; p\frac{r}{r+a} \; < \; p$$
so
$$\frac{d+s}{r+a} \; = \; \frac{1}{a}(tr+s+c) + \frac{k-r-c}{r+a} \; < \; \frac{1}{a}(tr+s+c)+p $$
and
$$\begin{array}{lll}\displaystyle\frac{d}{r} & = & \displaystyle\frac{1}{a}(tr+s+c)+(t-1)+\frac{k}{r} \vspace{0.25cm}\\
& \ge & \displaystyle\frac{1}{a}(tr+s+c)+(t-1)+p\,.\end{array}$$

\vspace{0.2cm}
\noindent Therefore if $pr \le k < (p+1)r$ for some non-negative integer $p$, then the integer values of $x$ satisfying
$$ \frac{d+s}{r+a} \; < \; x \; \le \frac{d}{r}$$
include
$$\frac{1}{a}(tr+s+c)+i$$
for $i = p, p+1, \ldots, p+(t-1)$ so there are at least $t$ such values of $x$. Therefore
$$\sigma(r,s,a,t) \; \le \; r \left\lceil\frac{tr+s}{a}\right\rceil + (t-1)r$$
as asserted. Theorem \ref{T26} now follows.
\end{proof}

\section{The threshold number $\sigma(r,s,a,t)$ when $r$ is odd and $a$ is odd}\label{sec:thresholdnumber}
The discussion in this section is rather like the discussion in the previous section, but it is sufficiently different, that, for the sake of clarity, we need to treat it separately.

We shall prove:

\vspace{0.25cm}
\noindent{\bf Theorem \ref{T12}(iv)}.  Let $r \ge 1$ be odd and $a \ge 1$ be odd, and let $d > r$.  Then every $(d,d+s)$-simple graph $G$ has an $(r,r+a)$-factorization with $x$ factors if and only if 
$$\frac{d+s}{r+a} \; \le \; x \; < \; \frac{d}{r}\,.$$
In Lemma \ref{L14} we proved the necessity of this condition. So it remains to prove the sufficiency.

\vspace{0.25cm}
\noindent We also prove:

\vspace{0.25cm}
\noindent{\bf Theorem \ref{T11}(iv)}.  Let $r \ge 1$ be odd and $a \ge 1$ be odd. Let $s \ge 0$ and $t \ge 1$. Then
$$\sigma(r,s,a,t) \; = \; \left\{ \begin{array}{l}  r\left\lceil\displaystyle\frac{tr+s}{a}\right\rceil + (t-1)r+1 \quad {\rm if} \;\; t \ge 2 \;\; {\rm or} \;\;{\rm if} \;\;t=1  \\
 \hspace{5cm}{\rm and} \;\; a < r+s\,, \vspace{0.25cm}\\
r \;\;\; {\rm if} \; \; t=1 \;\; {\rm and} \;\; a \ge r+s\,.\end{array}\right.$$

\vspace{0.25cm}
\noindent We first prove the following theorem, which gives a lower bound for $\sigma(r,s,a,t)$ in this case.

\begin{theorem} Let $r \ge 1$ be odd, $a \ge 1$ be odd, $t \ge 1$ and $s \ge 0$ be integers. Then
$$\sigma(r,s,a,t) \; \ge \; \left\{ \begin{array}{l}  r\left\lceil\displaystyle\frac{rt+s}{a}\right\rceil + (t-1)r+1 \quad {\rm if} \;\; t \ge 2 \;\; {\rm or} \;\; t=1 \;\; \\ \hspace{5cm} {\rm and} \;\; a<tr+s\,, \vspace{0.25cm}\\
 r  \;\;\; {\rm if} \;\; t=1 \;\;  {\rm and} \;\; a \ge r+s\,.\end{array}\right.$$
\label{T27}\end{theorem}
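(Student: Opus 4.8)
Theorem \ref{T27} asserts a lower bound, so the plan is to exhibit, for each claimed threshold value, a $(d,d+s)$-simple graph $G$ that fails to have an $(r,r+a)$-factorization with $x$ factors for $t$ different values of $x$, where $d$ is chosen to be exactly one less than the asserted threshold. This mirrors the structure of the lower-bound arguments in Theorem \ref{T24} (for $r$ even) and the necessity proof in the first half of Theorem \ref{T23}, but now both parities are odd, so the constraint $\frac{d+s}{r+a} \le x < \frac{d}{r}$ from Theorem \ref{T12}(iv) is the relevant one. The essential point is that when $r$ and $a$ are both odd, the upper endpoint $\frac{d}{r}$ is excluded (because $r$ is odd forces the parity obstruction on $r$-factors, exactly as in Lemma \ref{L14}(ii)), which costs us one admissible value of $x$ compared with the even-$r$ case and accounts for the extra $+1$ in the threshold.

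\medskip
\noindent The plan is to treat the two regimes separately. First, for the main case $t \ge 2$, or $t=1$ with $a < tr+s$, I would set
$$d \; = \; \frac{r}{a}(tr+s+c) + (t-1)r\,,$$
where $c$ is the integer with $a \mid tr+s+c$ and $0 \le c \le a-1$, exactly as in the proof of Theorem \ref{T24}; this $d$ equals $r\lceil\frac{tr+s}{a}\rceil + (t-1)r$, one less than the claimed threshold. I would then compute $\frac{d}{r}$ and $\frac{d+s}{r+a}$ in closed form (routine, following the algebra of Theorem \ref{T24}) and count the integers $x$ in the half-open interval $\frac{d+s}{r+a} \le x < \frac{d}{r}$. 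Because $\frac{d}{r} = \frac{1}{a}(tr+s+c) + (t-1)$ is now itself an integer but is \emph{excluded}, the admissible values are $\frac{1}{a}(tr+s+c)+j$ for $j = 0,1,\dots,t-2$, giving only $t-1$ values. By Theorem \ref{T12}(iv) (whose necessity direction is Lemma \ref{L14}(iv)), at this value of $d$ there is a $(d,d+s)$-simple graph with no factorization for $t$ distinct $x$, so $\sigma(r,s,a,t) > d$, yielding the $+1$.

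\medskip
\noindent Second, for the degenerate case $t=1$ with $a \ge r+s$, the claim is merely $\sigma(r,s,a,t) \ge r$. Here I would argue directly: if $d = r-1$, take $G$ to be $(r-1)$-regular; since $d = r-1 < r$, no spanning subgraph of $G$ is an $(r,r+a)$-factor at all (the minimum degree is too small), so $G$ has no $(r,r+a)$-factorization with any number of factors, forcing $\sigma(r,s,a,t) \ge r$. One must also confirm that a suitable $(r-1,r-1+s)$-simple graph exists, which is immediate.

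\medskip
\noindent The main obstacle is ensuring that the extremal graphs actually exist and genuinely fail, rather than merely counting interval endpoints. The interval count shows only that \emph{if} all $(d,d+s)$-simple graphs were factorable at the admissible $x$, there would be too few of them; to turn this into a lower bound on $\sigma$ I must invoke the necessity direction of Theorem \ref{T12}(iv), i.e.\ Lemma \ref{L14}(iv), which supplies the explicit graphs (the two-copies-of-$D$ construction with $d$ odd, and the $K_{d+1}$ construction with $d$ even) that block the excluded endpoint $x = \frac{d}{r}$. The delicate bookkeeping will be in verifying that the counted integer values $\frac{1}{a}(tr+s+c)+j$ do land strictly below $\frac{d}{r}$ and at or above $\frac{d+s}{r+a}$ — in particular checking the boundary inequality $\frac{r+c}{r+a} < 1$, which holds since $c \le a-1 < a$ — and in confirming the hypothesis $d > r$ of Theorem \ref{T12}(iv) is met in the main case, so that the construction applies.
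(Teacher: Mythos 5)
Your proposal follows the paper's own proof very closely: the paper likewise sets $d = \frac{r}{a}(tr+s+c)+(t-1)r$ with $a \mid tr+s+c$ and $0 \le c \le a-1$, computes $\frac{d}{r} = \frac{1}{a}(tr+s+c)+(t-1)$ and $\frac{d+s}{r+a} = \frac{1}{a}(tr+s+c)-\frac{r+c}{r+a}$, observes that the admissible integers (in effect, those in the half-open interval of Theorem \ref{T12}(iv), whose integer right endpoint is excluded) are exactly $\frac{1}{a}(tr+s+c)+j$ for $j=0,\dots,t-2$, i.e.\ only $t-1$ of them, and concludes $\sigma > d$; the degenerate case $t=1$, $a \ge r+s$ is treated separately, and your $(r-1)$-regular argument for it is in fact cleaner than the paper's remark.

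There is, however, one step you lean on that would fail as stated: the claim that the graphs of Lemma \ref{L14}(iv) (the two-copies-of-$D$ graph, or $K_{d+1}$) are themselves the witnesses. Those graphs are $d$-regular, so they contain no vertex of degree $d+s$; the degree-counting lower bound for them is $\frac{d}{r+a}$, not $\frac{d+s}{r+a}$, and blocking the single value $x=\frac{d}{r}$ does not confine their feasible set to $t-1$ values. Concretely, take $r=a=1$, $t=2$, $s=5$, so $c=0$ and $d=8$: your witness would be $K_9$, but $K_9$ has a $(1,2)$-factorization with $x$ factors for every $x \in \{4,5,6,7\}$ (apply Theorem \ref{T12}(iv) with $s=0$, $d=8$), which is four values, not fewer than $t=2$, so $K_9$ does not show $\sigma(1,5,1,2) \ge 9$. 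To obtain a genuine witness you must also force a vertex of degree $d+s$: for instance, take the disjoint union of the Lemma \ref{L14}(iv) graph with any $(d+s)$-regular simple graph such as $K_{d+s+1}$. Any factorization of the union restricts to a factorization of each component, so its feasible values of $x$ lie in $\left[\frac{d+s}{r+a}, \frac{d}{r}\right)$, and your count of $t-1$ then finishes the argument. In fairness, the paper glosses over this point too --- its proof is only the interval count plus an implicit appeal to Theorem \ref{T12}(iv), and it never exhibits a witness graph; your proposal makes the reliance on explicit witnesses visible, which is precisely what exposes this missing construction.
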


\begin{proof}
First suppose that $t \ge 2$ or $t=1$ and $a < r+s$.  Let us remark that an integer $p$ satisfies
$$p\; = \; r \left\lceil\frac{rt+s}{a}\right\rceil + (t-1)r+1$$
if and only if
$$p\; = \; r\left(\frac{rt+s+c}{a}\right) + (t-1)r+1$$
where $a \, | \, tr+s+c$ and $0 \le c \le a-1$.

\vspace{0.25cm}
Let an integer $d$ satisfy
$$d \; = \; \frac{r}{a}(tr+s+c)+(t-1)r$$
for some $c$ such that $a \,|\, tr+s+c$ and $0 \le c \le a-1$. Then
$$\frac{d}{r} \; = \; \frac{1}{a}(tr+s+c)+(t-1)$$
and
$$\frac{d+s}{r+a} \; = \; \frac{1}{a}(tr+s+c) - \frac{r+c}{r+a}\;.$$

\vspace{0.25cm}
\noindent The integer values of $x$ satisfying
$$\frac{d+s}{r+a} \; \le \; x \; \le \frac{d}{r}$$
include
$$\frac{tr+s+c}{a} + i$$
for $i = 0,1, \ldots, t-2$, since $0 \le c \le a-1$. They do not include $i=-1$ or $t-1$ or any other integer values, so there are only $t-1$ such integer values of $x$. Therefore
$$\sigma(r,s,a,t) \; > \;r\left(\frac{tr+s+c}{a}\right) + (t-1)r\,,$$
and so
$$\sigma(r,s,a,t) \; \ge \; r \left\lceil\frac{rt+s}{a}\right\rceil + (t-1)r+1\,.$$
\end{proof}

Now suppose that $t=1$ and $a > r+s$. If applied in this case, the inequality derived in the other case would (erroneously) say that $\sigma(r,s,a,t)\ge r+1$. But if $d=r$ and $G$ is an $(r,r+a)$-graph, then $G$ would be a $(d,d+s)$-graph with an $(r,r+a)$-factorization with $1$ factor. Therefore, in this case,
$$\sigma(r,s,a,t) \; \ge \; r\,.$$

Next we provide quite good bounds for $\sigma(r,s,a,t)$ when $r$ and $a$ are both odd, and also show that if  $$ d \ge r \left\lceil\frac{rt+s}{a}\right\rceil + (t-1)r+1 $$ in this case, then there are $t$ integer values of $x$ satisfying $\displaystyle\frac{d+s}{r+a} \le x \; < \; \frac{d}{r}$.

\begin{theorem} Let $ r \ge 1$ be odd, $a \ge 1$ be odd and $t=2$ or $t=1$ and $ a < rt+s$. Then
$$r \left\lceil \frac{tr+s}{a}\right\rceil + (t-1)r+1 \; \le \; \sigma(r,s,a,t) \; \le \; r \left\lceil\frac{tr+s+1}{a}\right\lceil + (t-1)r+1\,.$$
Moreover, if $d \displaystyle\ge r \left\lceil\frac{rt+s}{a}\right\rceil +(t-1)r+1$ then there are $t$ values of $x$ satisfying $\displaystyle\frac{d+s}{r+a} \le x < \frac{d}{r}$.
\label{T28}\end{theorem}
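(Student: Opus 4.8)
The plan is to assemble the two bounds from results already in hand and then to prove the ``moreover'' clause by a direct integer-counting argument. The lower bound
$$\sigma(r,s,a,t) \; \ge \; r\left\lceil\frac{tr+s}{a}\right\rceil + (t-1)r + 1$$
is exactly the first case of Theorem \ref{T27} (valid under the stated hypothesis that $t\ge 2$, or $t=1$ and $a<rt+s$), while the upper bound
$$\sigma(r,s,a,t) \; \le \; r\left\lceil\frac{tr+s+1}{a}\right\rceil + (t-1)r + 1$$
is precisely the general bound of Theorem \ref{T22}. So the only real work is the last assertion: if $d \ge r\lceil(rt+s)/a\rceil + (t-1)r + 1$, then the half-open interval $[\frac{d+s}{r+a}, \frac{d}{r})$ contains at least $t$ integers. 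This is the two-odd analogue of the computation inside Theorem \ref{T26}, and I would follow that template: write $r\lceil(tr+s)/a\rceil = \frac{r}{a}(tr+s+c)$ with $a\mid tr+s+c$ and $0\le c\le a-1$, set $M=\frac1a(tr+s+c)\in\mathbb Z$, and put $d = rM+(t-1)r+k$ with $k\ge 1$.

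Next I would record the two endpoints. Directly $\frac{d}{r} = M + (t-1) + \frac{k}{r}$, and, after expanding $d+s = \frac{r+a}{a}(tr+s+c) - (r+c) + k$, one gets $\frac{d+s}{r+a} = M + \frac{k-r-c}{r+a}$. Now I would choose $p = \lfloor (k-1)/r\rfloor \ge 0$, so that $pr < k \le (p+1)r$. The strict inequality $k>pr$ gives $p < k/r$, while $k\le(p+1)r$ together with $c\ge 0$ gives $k-r-c \le pr-c \le p(r+a)$, hence $\frac{k-r-c}{r+a}\le p$. Consequently the $t$ integers $x = M+p, M+p+1, \ldots, M+p+(t-1)$ all satisfy
$$\frac{d+s}{r+a} \; = \; M + \frac{k-r-c}{r+a} \; \le \; M+p \; \le \; x \; \le \; M+p+(t-1) \; < \; M+(t-1)+\frac{k}{r} \; = \; \frac{d}{r},$$
which is the desired conclusion.

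The main point to get right — and the only genuine difference from the even-$r$ calculation in Theorem \ref{T26} — is the parity-driven switch in which endpoint is strict. Because $r$ and $a$ are both odd we are in the regime of Theorem \ref{T12}(iv), where the admissible range is $\frac{d+s}{r+a}\le x<\frac{d}{r}$ (left endpoint allowed, right endpoint forbidden), the mirror image of the even-$r$ case. This forces the block decomposition $pr<k\le(p+1)r$, i.e.\ the choice $p=\lfloor(k-1)/r\rfloor$ rather than $\lfloor k/r\rfloor$, so that the \emph{strict} upper bound $p<k/r$ is secured; the term $c\in\{0,\ldots,a-1\}$ then only needs to cooperate with the (now non-strict) lower bound. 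Checking that this choice of $p$ is legitimate for every $k\ge1$, and that no integer is accidentally lost at either boundary, is the sole place where care is needed; everything else is bookkeeping. I would not expect to invoke Lemma \ref{L19} or Theorem \ref{T12}(iv) here, since the ``moreover'' clause is purely an arithmetic statement about counting integers in an interval, the factorization consequences being drawn only afterwards.
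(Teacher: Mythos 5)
Your proposal is correct, and for the two displayed bounds it does exactly what the paper does (lower bound quoted from Theorem \ref{T27}, upper bound from Theorem \ref{T22}). But for the ``moreover'' clause your route is genuinely different from the paper's proof of Theorem \ref{T28}: it is, in effect, the argument the paper postpones to Theorem \ref{T29}. The paper instead recycles the proof of Theorem \ref{T22}, which already yields $t$ integers \emph{strictly} between $\frac{d+s}{r+a}$ and $\frac{d}{r}$ whenever $d=r\lceil\frac{tr+s+1}{a}\rceil+(t-1)r+k$ with $k\ge1$, and then observes that the two thresholds $r\lceil\frac{tr+s}{a}\rceil+(t-1)r+1$ and $r\lceil\frac{tr+s+1}{a}\rceil+(t-1)r+1$ coincide unless $a\mid tr+s$; in that exceptional case the paper checks by hand the single value $d=\frac{r}{a}(tr+s+a)+(t-1)r$, where the left endpoint $\frac{d+s}{r+a}=\frac{1}{a}(tr+s)$ is an integer and is admissible precisely because the left inequality is now non-strict. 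Your proof runs one uniform count over all $k\ge1$ via the decomposition $pr<k\le(p+1)r$, i.e.\ $p=\lfloor(k-1)/r\rfloor$, mirroring Theorem \ref{T26} with the strict and non-strict endpoints swapped. This buys real completeness: when $a\mid tr+s$ and $r\ge3$, the paper's patch says nothing about the values $d=r\lceil\frac{tr+s}{a}\rceil+(t-1)r+k$ with $1\le k\le r-1$, which lie below the range reached by Theorem \ref{T22}'s proof (indeed the paper's closing identity equating its one checked value $\frac{r}{a}(tr+s+a)+(t-1)r$ with $r\lceil\frac{tr+s}{a}\rceil+(t-1)r+1$ holds only when $r=1$), whereas your argument covers every such $d$ in one sweep; you also avoid the paper's small slip in the analogous Theorem \ref{T29} computation, where $\frac{pr-c}{r+a}\le p\frac{r}{r+a}<p$ fails at $p=0$, by keeping the final bound non-strict. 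The one caveat, inherited from the paper itself rather than introduced by you, is that Theorem \ref{T22} is stated only for $a\ge2$, so the case $a=1$ of the upper bound formally needs a separate remark (the paper's own proof of Theorem \ref{T28} quietly restricts to $a\ge3$).
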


\vspace{0.25cm}
\begin{proof}
From Theorem \ref{T22} and Theorem \ref{T27}, if $ r\ge1$ and $a\ge 3$, then
$$r \left\lceil\frac{rt+s}{a}\right\rceil + (t-1)r + 1 \/ \le \/ \sigma(r,s,a,t) \; \le \; r \left\lceil\frac{rt+s+1}{a}\right\rceil + (t-1)r+1\,.$$
We know from the proof of Theorem \ref{T22} that if
$$d=r \left\lceil\frac{tr+s+1}{a}\right\rceil + (t-1)r+k$$
where $k \ge 1$, then there are at least $t$ values of $x$ satisfying
$$\frac{d+s}{r+a} \; < \; x \; < \; \frac{d}{r} \;.$$
With this value of $d$, we know that
$$d\; = \; \frac{r(rt+s+c)}{a} + (t-1)r+k$$
for some $c$, $0 < c \le a$, and $a \, |\, rt+s+c$.

\vspace{0.25cm}
But if $d = r \displaystyle\left\lceil\frac{tr+s}{a}\right\rceil + (t-1)r+k$ where $k \ge 1$  then if $a =c$, i.e. $a \, |\, rt+s$, i.e. $\displaystyle\left\lceil\frac{rt+s}{a}\right\rceil \; \neq \; \left\lceil\frac{rt+s+1}{a}\right\rceil$ it is no longer true that
$$ d \; = \; \frac{r(rt+s+c)}{a} \; + \; (t-1)r+k\,.$$
So if $k=0$ and $c=a$ then it is not true that there are $t$ values of $x$ satisfying $\frac{d+s}{r+a} < x < \frac{d}{r}$.

\vspace{0.25cm}
But there are $t$ values of $x$ satisfying $\frac{d+s}{r+a} \le x < \frac{d}{r}$. For then (following the discussion of Theorem \ref{T22}), 
$$\frac{d}{r} \; = \; \frac{1}{a}(tr+s+a) + (t-1) \; = \; \frac{1}{a}(tr + s) + t$$
and
$$\frac{d+s}{r+a} \; = \; \frac{1}{a}(tr+s+a) - \frac{r+a}{r+a} \; = \; \frac{1}{a} (tr+s)\,.$$

\vspace{0.25cm}
So the values of $x$ satisfying $\frac{d+s}{r+a} \le x < \frac{d}{r}$ are
$$\frac{1}{a}(tr+s) + i \;\;\; {\rm for} \; \; i=0,1, \ldots, t-1$$
so there are $t$ integer values of $x$ as asserted.  In that case
$$\begin{array}{lll}  d & = & \displaystyle\frac{r}{a} \left(\frac{rt+s+a}{a}\right) + (t-1)r + 0 \vspace{0.25cm}\\
& = & \displaystyle\frac{r}{a}\left(\frac{rt+s+0}{a}\right) + (t-1)r + 1 \vspace{0.25cm}\\
& = & r \displaystyle\left\lceil\frac{rt+s}{a}\right\rceil + (t-1)r+1\,. \end{array}$$
Therefore if $d \ge r\left\lceil\frac{rt+s}{a}\right\rceil + (t-1)r+1$ we have
$$\frac{d+s}{r+a} \; \le \; x \; < \; \frac{d}{r}$$
for $t$ integer values of $x$.

\end{proof}

In Theorem \ref{T28} we showed that if $r\ge1$ is odd and $a \ge 1$ is odd then for every $(d,d+s)$ simple graph with $d \ge r\left\lceil\frac{tr+s}{a}\right\rceil + (t-1)r+1$ there are at least $t$ values of $x$, satisfying $\frac{d+s}{r+a} \le x < \frac{d}{r}$. In particular, if $d \ge r \left\lceil\frac{r+s}{a}\right\rceil +1$ then every $(d, d+s)$-simple graph $G$ has an $(r,r+a)$-factorization with $x$ factors for each value of $x$ satisfying $\frac{d+s}{r+a} \le x < \frac{d}{r}$, provided $r \ge 3$ is odd and $a \ge 3$ is odd. Taken together with the necessity part of Theorem \ref{T12}(iv) this proves:

\vspace{0.25cm}
\noindent{\bf Theorem \ref{T12}(iv)}.   Let $r \ge 1$ be odd and $ a \ge 1$ be odd, and let $s \ge 0$. Then every $(d,d+s)$-simple graph $G$ has an $(r,r+a)$-factorization with $x$ factors, where $x$ is an integer, if and only if $\frac{d+s}{r+a} \; \le \; x \; < \; \frac{d}{r}$.

\vspace{0.25cm}
We finally turn to the proof of our main result in this section.

\begin{theorem} Let $r \ge 1$ be odd and $a \ge 1$ be odd. Let $s \ge 0$ and $t \ge 1$. Then
$$\sigma(r,s,a,t) \; = \; r\left\lceil\frac{rt+s}{a}\right\rceil + (t-1)r+1$$
if $t \ge 1$, or if $t=1$ and $a<rt+s$. If $t=1$ and $a \ge rt+s$ then
$$\sigma(r,s,a,t) \; = \; r\,.$$
\label{T29}\end{theorem}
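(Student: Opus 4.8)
The plan is to assemble the three ingredients already established in this section and in Section~\ref{sec:threshold}: the lower bound of Theorem~\ref{T27}, the counting of admissible values of $x$ in the ``moreover'' part of Theorem~\ref{T28}, and the sufficiency of the interval condition proved as Theorem~\ref{T12}(iv). First I would dispose of the degenerate range $t=1$, $a \ge r+s$ separately, and then treat the main case ($t \ge 2$, or $t=1$ and $a < r+s$) by matching the lower and upper bounds.

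For the degenerate case, Theorem~\ref{T27} already supplies $\sigma(r,s,a,t) \ge r$. For the reverse inequality I would argue directly: if $d \ge r$ then every $(d,d+s)$-simple graph $G$ has all degrees in $\{r, r+1, \ldots, r+s\}$, and since $a \ge r+s$ gives $r+a \ge r + s$, the whole graph $G$ is itself an $(r,r+a)$-factor. Hence $G$ has an $(r,r+a)$-factorization with the single value $x=1$, so for every $d \ge r$ there are at least $t=1$ admissible values of $x$; therefore $\sigma(r,s,a,t) \le r$, and equality follows.

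For the main case the lower bound $\sigma(r,s,a,t) \ge r\lceil \frac{rt+s}{a}\rceil + (t-1)r + 1$ is exactly Theorem~\ref{T27}. For the upper bound I would take any $d \ge r\lceil \frac{rt+s}{a}\rceil + (t-1)r + 1$. By the ``moreover'' clause of Theorem~\ref{T28} there are at least $t$ integer values of $x$ with $\frac{d+s}{r+a} \le x < \frac{d}{r}$. The point to check is that this bound on $d$ already forces $d > r$: when $t \ge 2$ we have $(t-1)r \ge r$, and when $t=1$ with $a < r+s$ we have $\lceil \frac{r+s}{a}\rceil \ge 2$, so in either subcase $d \ge 2r+1 > r$. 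Consequently Theorem~\ref{T12}(iv) applies and shows that each such $x$ is realized by an $(r,r+a)$-factorization of $G$. Thus every $(d,d+s)$-simple graph with $d \ge r\lceil \frac{rt+s}{a}\rceil + (t-1)r + 1$ is $(r,r+a)$-factorable with $x$ factors for at least $t$ values of $x$, giving $\sigma(r,s,a,t) \le r\lceil \frac{rt+s}{a}\rceil + (t-1)r + 1$. The two bounds coincide, completing the proof.

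Since the genuine arithmetic lives in Theorems~\ref{T27} and~\ref{T28}, the remaining work is purely the synthesis, and I do not expect a serious obstacle. The only steps requiring care are the two verifications that $d > r$ (so that Theorem~\ref{T12}(iv) may be legitimately invoked in the main case) and the single-factor observation covering the degenerate range; of these, the easily overlooked subtlety is that $a \ge r+s$ forces $r+a \ge r+s$, which is what makes the entire graph a lone $(r,r+a)$-factor.
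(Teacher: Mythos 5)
Your handling of the main case ($t \ge 2$, or $t=1$ with $a<r+s$) is correct and follows the paper's own route: the paper re-derives the count of admissible values of $x$ directly (splitting on $pr < k \le (p+1)r$) rather than citing the ``moreover'' clause of Theorem~\ref{T28}, but otherwise the synthesis --- at least $t$ integers $x$ with $\frac{d+s}{r+a} \le x < \frac{d}{r}$, each realized via Theorem~\ref{T12}(iv) once $d>r$, matched against the lower bound of Theorem~\ref{T27} --- is the same, and your verification that $d>r$ holds in both subcases is exactly the check that is needed.

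However, your degenerate case $t=1$, $a \ge r+s$ contains a genuine gap. The assertion ``if $d \ge r$ then every $(d,d+s)$-simple graph $G$ has all degrees in $\{r,r+1,\ldots,r+s\}$'' is false for every $d>r$: the degrees lie in $\{d,\ldots,d+s\}$, so $G$ is itself an $(r,r+a)$-factor only when $d+s \le r+a$, that is, only in the range $r \le d \le r+a-s$. Since proving $\sigma(r,s,a,1)\le r$ requires a conclusion for \emph{every} $d \ge r$, your argument says nothing about any $d > r+a-s$ (for instance $r=a=1$, $s=0$, $d=5$: a $5$-regular simple graph is certainly not a $(1,2)$-factor, so an admissible $x$ must be produced by some other means). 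Note also that you cannot close this hole by citing the ``moreover'' clause of Theorem~\ref{T28}, because that theorem's hypotheses explicitly exclude the case $a \ge rt+s$. The paper avoids the problem because its counting argument for the upper bound is carried out for all $0 \le c \le a-1$ and all $k \ge 1$, hence applies in the degenerate case as well and yields $\sigma(r,s,a,1) \le r\left\lceil\frac{r+s}{a}\right\rceil+1 = r+1$; the whole-graph observation is then needed only for the single remaining value $d=r$, which brings the bound down to $r$. With that adjustment (or with a direct check that for every $d \ge r+1$ the interval $\left[\frac{d+s}{r+a},\frac{d}{r}\right)$ contains an integer, after which Theorem~\ref{T12}(iv) applies since $d>r$), your synthesis would be complete.
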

We shall show that Theorem \ref{T12}(iv) implies Theorem \ref{T28}.

\begin{proof}
Let
$$d \; = \; \frac{r}{a}(tr+s+c)+(t-1)r+k$$
where $k \ge 1$. We show that there do exist $t$ integer values of $x$ satisfying $\frac{d+s}{r+a} \le x < \frac{d}{r}$. Then it follows by Theorem \ref{T12}(iv) that every $(d,d+s)$-simple graph is $(r,r+a)$-factorable into $x$ factors for at least $t$ values of $x$.

\vspace{0.25cm}
First we note that
$$\frac{d}{r} \; = \; \frac{1}{a}(tr+s+c)+(t-1)+\frac{k}{r}$$
and
$$\frac{d+s}{r+a} \; = \; \frac{1}{a}(tr+s+c)+\frac{k-r-c}{r+a}\;.$$
For $p$ a non-negative integer, if $pr< k \le (p+1)r$ then $\frac{k}{r} > p$ and
$$\frac{k-r-c}{r+a} \; \le \; \frac{(p+1)r-r-c}{r+a} \; = \; \frac{pr-c}{r+a} \; \le \; p \frac{r}{r+a} \; < \; p\,,$$
so 
$$\frac{d+s}{r+a} \; = \frac{1}{a}(tr+s+c)+\frac{k-r-c}{r+a} \; \le \; \frac{1}{a}(tr+s + c)+p$$
and
$$\begin{array}{lll}\displaystyle\frac{d}{r} & = & \displaystyle\frac{1}{a}(tr+s+c)+(t-1)+\frac{k}{r} \vspace{0.25cm}\\
& > & \displaystyle\frac{1}{a}(tr+s+c)+(t-1)+p\,.\end{array}$$
Therefore if $pr<k\le(p+1)r$ for some non-negative integer $p$, then the integer values of $x$ satisfying $\frac{d+s}{r+a} \le x < \frac{d}{r}$ include
$$\frac{1}{a}(tr+s+c)+i \;\;\;{\rm for} \; \; i = p, p+1, \ldots, p+(t-1),$$
so there are at least $t$ such values of $x$.

Therefore
$$\sigma(r,s,a,t) \; \le \; r \left\lceil\frac{tr+s}{a}\right\rceil + (t-1)r+1\,$$
as asserted. In view of Theorem \ref{T27}, the main part of Theorem \ref{T29} now follows.

\vspace{0.25cm}
If $t=1$ and $a \ge tr+s$ then the formula $r \left\lceil\frac{tr+s}{a}\right\rceil + (t-1)r+1$ yields the value $r+1$.  But when $d=r$ then, since $s<r+s\le a$, a $(d,d+s)$-graph is an $(r,r+a)$-factor. Therefore, in this case,
$$\sigma(r,s,a,t) = r\,.$$
\end{proof}

This completes the proof of Theorem \ref{T29}.

\section{Boundary graphs}\label{boundarygraphs}

From Theorems \ref{T12}(iii) and \ref{T12}(iv) we know that if $r$ is even and $a$ is odd, or if $r$ is odd and $a$ is odd, and if $\frac{d+s}{r+a} = x = \frac{d}{r}$, where $x$ is an integer, $x \ge 1$, and if there are $(d, d+s)$-graphs $G$ which satisfy this equation, then the graphs cannot have an $(r,r+a)$-factorization with $x$ factors. But do such graphs exist, and what properties do they have? By a result of Kano and Saito \cite{KanoSaito}, such graphs do have at least one $(r,r+a)$-factor. It would be natural to suppose that they have $x-1$ edge-disjoint $(r,r+a)$-factors, but we have not investigated this. In this section we give examples of such graphs.

Given positive integers $r,s,a,d$ call a graph $G$ satisfying $\frac{d+s}{r+a} = x = \frac{d}{r}$, where $x \ge 1$ is an integer, a {\em boundary graph}. Let $BG(r,s,a,d)$ be the set of all boundary graphs with parameters $r, s, a, d$.

\begin{theorem}
Let $r \ge 2$ be an even and $a \ge 3$ be an odd positive integer. Let $d$, $s$ and $x$ be positive integers such that $\frac{d+s}{r+a} = x = \frac{d}{r}$, $x \ge 2$ and $d > \frac{a}{r}$. Then $B(r,s,a,d) \neq \phi$.
\label{T30}\end{theorem}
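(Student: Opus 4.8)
\medskip
\noindent\textbf{Proof strategy.}
First I would convert the boundary equation into explicit degrees: from $x=\frac{d}{r}$ we get $d=rx$, and from $x=\frac{d+s}{r+a}$ we get $s=(r+a)x-rx=ax$, so that $d=rx$ and $d+s=(r+a)x$. Since $r$ is even while $r+a$ is odd, any $(r,r+a)$-factorization $F_1,\dots,F_x$ of a $(d,d+s)$-graph is tightly constrained at the extreme degrees: a vertex of degree exactly $d=rx$ must have $d_{F_i}(v)=r$ for all $i$ (the $x$ summands lie in $[r,r+a]$ and sum to $rx$), and a vertex of degree exactly $d+s=(r+a)x$ must have $d_{F_i}(v)=r+a$ for all $i$. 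Thus in every factor each top-degree vertex has the \emph{odd} degree $r+a$ and each bottom-degree vertex has the \emph{even} degree $r$. I would build boundary graphs using \emph{only} these two degrees, so that the handshake lemma sees complete parity information.

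I would then split on the parity of $x$ (equivalently of $d+s$, as $r+a$ is odd). If $x$ is even, then $d+s$ is even and $(d-1)(d+s)$ is even, so there is a $(d-1)$-regular simple graph $H$ on $d+s$ vertices; joining a new vertex $w$ to all of $H$ produces a simple graph $G_1$ in which $w$ has degree $d+s$ and every other vertex has degree $d$. In any would-be factorization $w$ would be the unique odd-degree vertex of each factor, contradicting the handshake lemma, so $G_1$ has no $(r,r+a)$-factorization into $x$ factors. If $x$ is odd, then $d+s$ is odd, so a simple graph cannot have just one vertex of the odd degree $d+s$; instead I would use a forced-edge gadget. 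Let $H'$ be a $(d-1)$-regular simple graph on $d+s-1$ vertices (which exists, the parities again working out), join a vertex $v_0$ to all of $H'$ to form a lobe in which $v_0$ has degree $d+s-1$ and the rest have degree $d$, take two disjoint copies of this lobe, and add a single edge $e$ between the two copies of $v_0$ to obtain $G_2$; then each $v_0$ has degree $d+s$ and all other vertices have degree $d$. Restricting a factor $F_i$ to one lobe, the degree-$d$ vertices contribute even degree and $v_0$ contributes $r+a-[e\in F_i]$, so that restriction has $0$ or $1$ odd-degree vertices according as $e\in F_i$ or not; the handshake lemma then forces $e\in F_i$ for every $i$, which is impossible for $x\ge2$. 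In either parity the graph realizes min degree $d$ and max degree $d+s$ and has no $(r,r+a)$-factorization into $x$ factors, hence lies in $B(r,s,a,d)$.

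The hypotheses enter as follows. The condition $x\ge2$ is genuinely necessary, since for $x=1$ a lone factor imposes no constraint (a $(d,d+s)$-graph with $d=r$, $d+s=r+a$ is itself an $(r,r+a)$-factor); the case $a=1$ is excluded because it is already settled by Theorem~\ref{T9}; and $a\ge3$ together with $d>\frac{a}{r}$ comfortably guarantees that the required regular graphs exist with the right degree-sum parities. The step I expect to demand the most care is the odd-$x$ construction: one must confirm that the lobe is realizable as a simple graph and, more importantly, that the forced-edge parity argument survives the gluing of the two lobes, so that $e$ truly must lie in every factor. The even-$x$ case is then routine, and in both cases the handshake argument shows directly that the graph has no such factorization, which is precisely the counterexample underlying the necessity of the strict inequality in Theorem~\ref{T12}(iii) at the boundary.
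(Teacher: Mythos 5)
Your proof is correct, and it reaches the theorem by a genuinely different construction than the paper's. The paper builds a near-bipartite, bi-degreed graph: vertex classes $M$ and $N$ with $|M|=xr+1$, $|N|=x(r+a)$, an auxiliary graph $H$ with $\frac{x(r+a)}{2}$ edges placed inside $M$, and a near-complete join between $M$ and $N$ (each vertex of $N$ missing only the vertices of $M$ carrying its label); the contradiction is then obtained by edge counting: since every vertex of $M$ is forced to degree exactly $r+a$ and every vertex of $N$ to degree exactly $r$ in each factor, each factor has odd degree sum, hence $|E(F_i)|\ge xr^2+xra+\lceil (r+a)/2\rceil$, and summing over the $x$ factors exceeds $|E(G)|$ (with a variant construction having one vertex of degree $xr+1$ when $x$ is odd). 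You instead use a cone over a $(d-1)$-regular graph when $x$ is even, and two such cones joined by a bridge $e$ when $x$ is odd, and you conclude directly from the handshake lemma: in the even case each factor would have exactly one odd-degree vertex, and in the odd case the parity of each lobe forces $e$ into every factor, contradicting edge-disjointness since $x\ge 2$. The forcing observation (bottom-degree vertices get even degree $r$, top-degree vertices get odd degree $r+a$ in every factor) is common to both arguments, but your route buys real simplifications: it needs no hypothesis $d>\frac{a}{r}$ (your regular graphs exist from $s\ge 1$ and automatic parity, whereas the paper needs that hypothesis to fit $H$ inside $M$), and your even case is a one-line parity contradiction rather than a summed edge count. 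What the paper's construction buys is examples in which a constant proportion of the vertices attain the top degree $d+s$ (yours have only one or two such vertices), and a counting template that transfers, with the roles of the two classes swapped, to Theorem \ref{T31} where $r$ and $a$ are both odd; note also that your odd-$x$ bridge gadget is essentially the same device the paper itself uses in Lemma \ref{L14}(ii)--(iii), so your argument unifies the boundary-graph examples with the necessity examples.
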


\begin{proof}
We separate the cases $x$ even and $x$ odd. Although these are similar, it is easier for the reader if they are treated separately.

\vspace{0.25cm}
\noindent{\bf Case 1: Let $x$ be even.}

Let $G$ be a bi-degreed simple graph with vertex sets $M$ and $N$, where $|M| = xr+1$ and $|N| = x(r+a)$. Since $d > \frac{a}{r}$ it follows that $\left(\frac{xr+1}{2}\right) > \frac{x(r+a)}{2}$, so there is a simple graph $H$ with $V(H) \subseteq M$ and $|E(H)| = \frac{x(r+a)}{2}$. Label the vertices of $H$ with labels $a_1, a_2, \ldots, ..., a_{x(r+a)}$ in such a way that if $v \in V(H)$ then $v$ receives $d_H(v)$ labels. Also assign the labels $a_1, a_2, \ldots, a_{x(r+a)}$ to the vertices of $N$, assigning one label to each vertex.

We have $H$ placed on the vertices of $M$. Then, to form $G$ from this, join each vertex $v$ of $N$ to each vertex of $M$ except the vertex with the same label as $v$. Then, for $v \in N$, $d_G(v) = xr$ and, for $v \in M$, $d_G(v) = x(r+a)$.

Notice that
$$|E(G)| \; = \; xr(x(r+a)) + \frac{x(r+a)}{2} \; = \; x^2r^2 + x^2ra + \frac{x(r+a)}{2}\;.$$
If $G$ has an $(r,r+a)$-factorization with $x$ factors, let $\{F_1, \ldots, F_x\}$ be such a set of factors. Each $F_i$ will have $r+a$ edges incident with each vertex of $M$, so for $ 1 \le i \le x$,
$$\sum_{v \in V(G)} d_{F_1}(v) \ge (xr+1)(r+a) + x(r+a)r = 2xr^2 + 2 xra + r + a\,.$$
Therefore
$$ |E(F_i)| \; \ge \; xr^2  + xra + \left\lceil \frac{r+a}{2}\right\rceil\,.$$
Consequently we have
$$\begin{array}{lll} |E(G)| & = & \displaystyle\sum_{i=1}^x |E(F_i)| \vspace{0.25cm} \\
& \ge & \displaystyle x^2r^2 + x^2 ra + x \left\lceil\frac{r+a}{2}\right\rceil \vspace{0.25cm} \\
& > & x^2r^2 + x^2ra+ \displaystyle \frac{x(r+a)}{2}\;, \;\;\; {\rm since} \; r+a \; {\rm is \; odd} \vspace{0.25cm} \\
& = & |E(G)|\,,\end{array}$$
a contradiction.

Therefore $G$ does not have an $(r,r+a)$-factorization into $x$ factors when $x$ is even.

In Figure 1 we give an example which illustrates the construction used in Theorem \ref{T30}, Case 1. Here $x=r=2$ and $a=1$, and the $(4, 6)$-simple graph has no $(2, 3)$-factorization, and $\frac{d+s}{r+a} = \frac{4+2}{2+1} = 2 = x = \frac{4}{2} = \frac{d}{r}\,$.

\begin{figure}[h]
	\centering
		\includegraphics[width=120mm]{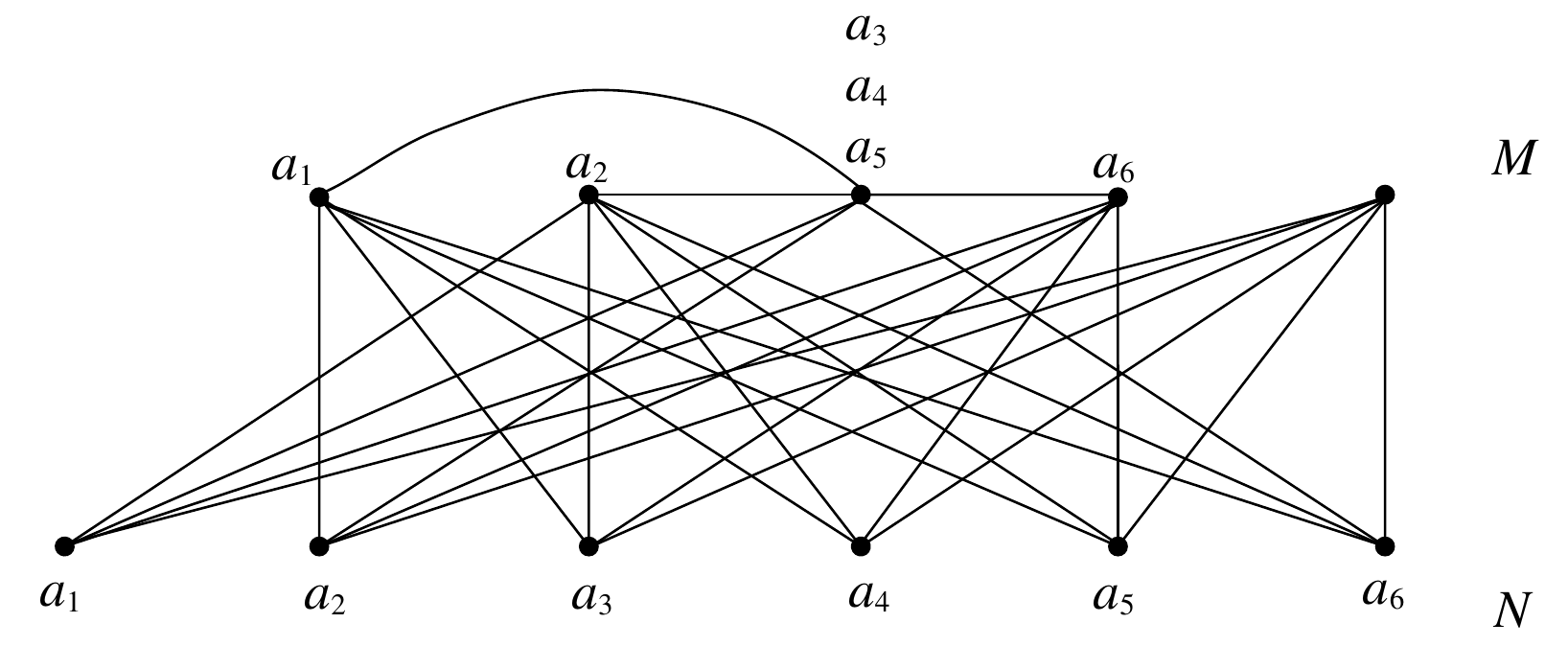}
	\caption{ A bidegreed $(4,6)$-simple graph with no $(2,3)$-factorization; a member of $BG(2,2,1,4)$.}
\end{figure}

\vspace{0.25cm}

\noindent {\bf Case 2:  Let $x$ be odd}

Let $G$ be a simple graph with vertex sets $M \cup N$ where $|M| = xr+1$ and $|N| = x(r+a)$. The vertices of $M$ have degree $x(r+a)$ and all except one vertex of $N$ will have degree $xr$ and one vertex of $N$, say $v_{x(r+a)}$, will have degree $xr+1$. Let $H$ be a simple graph with $V(H) \subset M$ and $|E(H)| = \frac{x(r+a)-1}{2}$. Label the vertices of $H$ with labels $a_1, a_2, \ldots, a_{x(r+a)-1}$ in such a way that if $v \in V(H)$ then $v$ receives $d_H(v)$ labels. Also assign the labels $a_1, \ldots, a_{x(r+a)-1}$ to the vertices of $N$, assigning one label to each vertex and leaving one vertex, say $v_{x(r+a)},$ unlabelled.

\vspace{0.25cm}
We have $H$ already placed on the vertices of $M$. To form $G$ from this, first join each vertex $v$ of $N$ to each vertex of $M$ except the vertex with the same label as $v$ (the vertex $v_{x(r+a)} \in V(N)$ is joined to all the vertices of $M$). Then, for $v \in V(M)$, $d_G(v) = x(r+a)$, and, for $v \in V(N) \backslash \{v_{x(r+a)}\}$, $d_G(v) = xr$ and $d_G(v_{x(r+a)}) = xr+1$. Then
$$\begin{array}{lll} |E(G)| & = & xr(x(r+a)) + 1 + \displaystyle \frac{x(r+a)-1}{2} \vspace{0.25cm} \\
& = & x^2 r^2 + x^2ra + \displaystyle \frac{x(r+a)+1}{2}\;.\end{array}$$
If $G$ has an $(r,r+a)$-factorization with $x$ factors, let $\{F_1, F_2, \ldots, F_x\}$ be such a set of factors.  Then each vertex of $M$ will have $r+a$ edges incident with each of $F_1, F_2, \ldots, F_x$, and, for all but one $i$, $F_i$ will have $r$ edges incident with each vertex of $N$, and the exceptional factor, say $F_x$, will have $r$ edges incident with each vertex of $V(N) \backslash \{v_{x(r+a)}\}$, and will have $r+1$ edges incident with $v_{x(r+a)}$. Therefore
$$\begin{array}{lll}\displaystyle\sum_{v\in V(G)} d_{F_i}(v)  & \ge & \left\{\begin{array}{ll}(xr+1)(r+a)+x(r+a)r & {\rm if} \;\; i \neq x\,,  \vspace{0.2cm} \\
(xr+1)(r+a)+x(r+a)r+1 & {\rm if} \;\; i = x\,, \end{array} \right. \vspace{0.25cm} \\
& = & \left\{\begin{array}{ll}2xr^2 + 2xra + (r+a) & {\rm if} \;\; i \neq x\,, \vspace{0.2cm} \\
2x^2 r^2 + 2xra + (r+a)+1 & {\rm if} \;\; i=x\,. \end{array}\right. \end{array}$$
Therefore
$$\begin{array}{lll}|E(F_i)|  & \ge & \left\{\begin{array}{ll} xr^2 + xra + \displaystyle\left\lceil\frac{r+a}{2}\right\rceil & {\rm if} \;\; i \neq x\,, \vspace{0.2cm} \\
xr^2 + xra+ \displaystyle\frac{r+a+1}{2} & {\rm if} \;\; i = x \end{array}\right. \vspace{0.25cm} \\
& = & xr^2 + xra + \displaystyle\frac{r+a+1}{2}\,.\end{array} $$
Therefore
$$\begin{array}{lll} |E(G)| & \ge & \displaystyle x^2 r^2 + x^2ra + \frac{x(r+a+1)}{2} \vspace{0.25cm} \\
& = & x^2r^2 + x^2ra + \displaystyle \frac{x(r+a)+1}{2} + \frac{(x-1)}{2} \vspace{0.25cm} \\
& > & x^2r^2 + x^2ra + \displaystyle \frac{x(r+a)+1}{2}\,, \;\; {\rm since} \; x \ge 1\,, \vspace{0.25cm} \\
& = & E(G), \end{array}$$
a contradiction.

\vspace{0.25cm}
Therefore $G$ has no $(r,r+a)$-factorization when $x\ge 3$, $x$ odd.

\end{proof}

In Figure 2 we give an example which illustrates the construction used in Theorem \ref{T30}, Case 2. Here $x=3$, $r=2$, $a=1$, $d=6$, $s=3$, so $\frac{d+s}{r+a} = \frac{d}{r} = x = 3$, and the $(6,9)$-simple graph has no $(2,3)$-factorization. 

\begin{figure}[h]
	\centering
		\includegraphics[width=140mm]{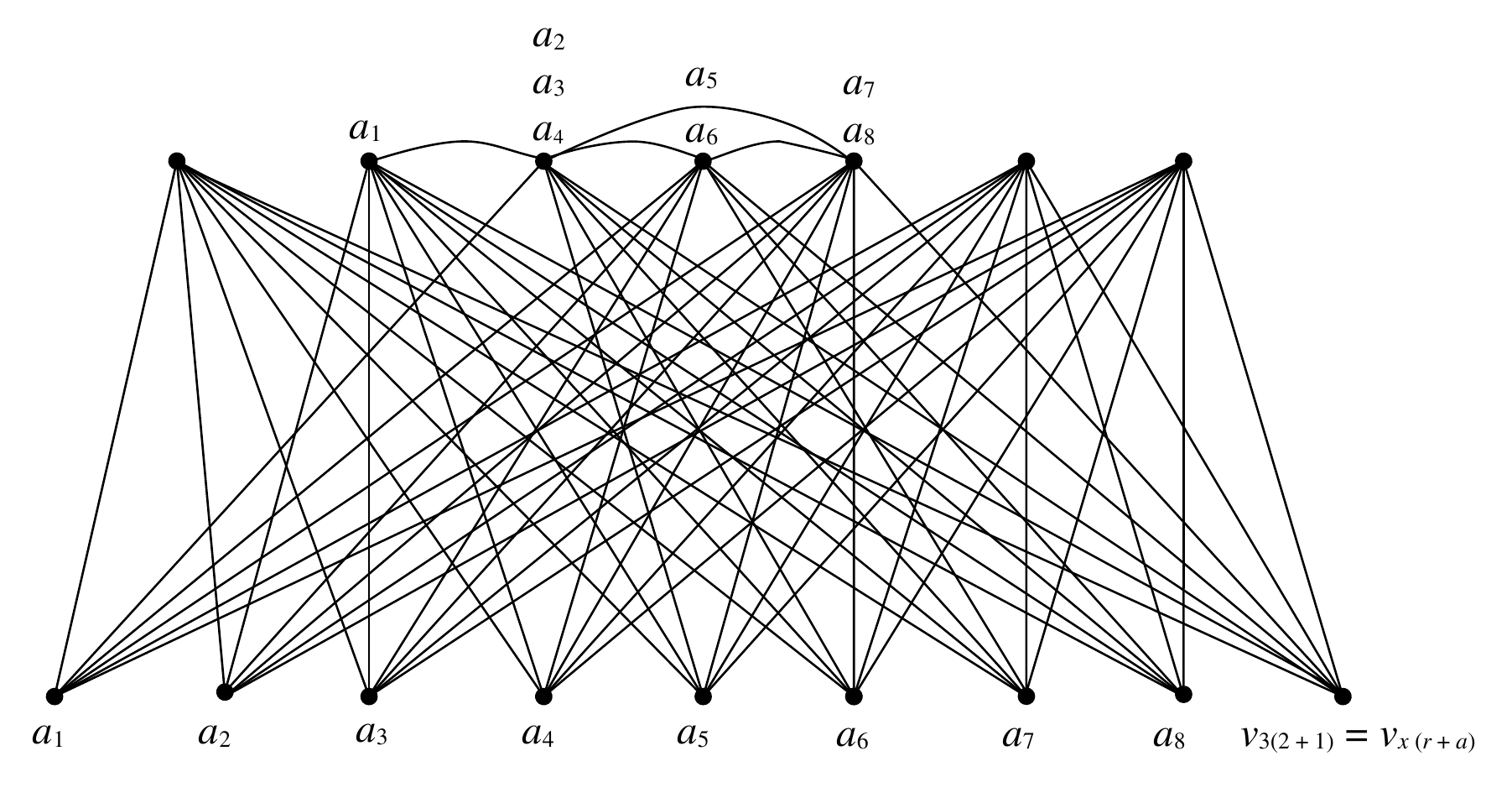}
	\caption{A $(6,9)$-simple graph with no $(2,3)$-factorization; a member of $B(2,3,1,6)$.}
\end{figure}

\vspace{0.25cm}
Similarly:

\begin{theorem} Let $r$ and $a$ be odd positive integers with $a \ge 3$. Let $d$, $s$ and $x$ be positive integers such that $\frac{d+s}{r+a} = x = \frac{d}{r}$ and $x \ge 2$. Then $B(r,s,a,d) \neq \phi$.
\label{T31}\end{theorem}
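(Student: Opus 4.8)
The plan is to mirror the construction of Theorem~\ref{T30}, but to reckon with one essential difference: when both $r$ and $a$ are odd the quantity $r+a$ is \emph{even}, so the parity obstruction that drove the edge-counting contradiction in Theorem~\ref{T30} disappears. Indeed, in any putative $(r,r+a)$-factorization with $x$ factors of the graph built there, each factor would be forced to carry exactly $\frac{r+a}{2}$ of the edges lying inside the part $M$; for $r+a$ odd this is not an integer, which is the contradiction, but for $r+a$ even it is a perfectly legal integer and no contradiction follows from counting. For the same reason one cannot introduce a single exceptional vertex of degree $xr+1$ as in Case~2 of Theorem~\ref{T30}, since that would make the total degree odd, which is impossible. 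Hence a genuinely different route to non-factorability is needed here.

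First I would record that the boundary equation $\frac{d+s}{r+a}=x=\frac{d}{r}$ forces $d=xr$, $d+s=x(r+a)$ and $s=xa$, so that the parameters are completely pinned down by $r$, $a$ and $x$. Then I would exhibit a bi-degreed $(d,d+s)$-simple graph $G$ that genuinely realizes both extreme degrees $d=xr$ and $d+s=x(r+a)$. One may attempt the same two-part construction as in Theorem~\ref{T30}, with $|M|=xr+1$ vertices of degree $x(r+a)$ and $|N|=x(r+a)$ vertices of degree $xr$, a sparse simple graph $H$ on $M$ with $\frac{x(r+a)}{2}$ edges (an integer here, since $r+a$ is even, so no splitting into cases according to the parity of $x$ is required), each vertex of $N$ being joined to all of $M$ except the vertex sharing its label; the degree bookkeeping is then identical to that of Theorem~\ref{T30}. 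Since Theorem~\ref{T31} carries no hypothesis corresponding to the condition $d>\frac{a}{r}$ present in Theorem~\ref{T30}, I would, where this two-part construction fails to fit, fall back on a construction whose realizability is automatic, for instance $K_{d+s+1}$ (which is $(d+s)$-regular) with $s$ edges deleted at a single vertex; this always yields a simple graph with one vertex of degree $d$ and every other vertex of degree $d+s$ or $d+s-1$, hence a genuine $(d,d+s)$-graph attaining both extremes.

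With such a $G$ in hand, non-factorability is obtained not by counting but by invoking the characterization already established in this section. By Theorem~\ref{T12}(iv), since $r$ and $a$ are odd and $d=xr\ge 2r>r$, a $(d,d+s)$-simple graph has an $(r,r+a)$-factorization with $x$ factors if and only if $\frac{d+s}{r+a}\le x<\frac{d}{r}$. Here $x=\frac{d}{r}$, which violates the strict inequality $x<\frac{d}{r}$, so $G$ has no $(r,r+a)$-factorization with $x$ factors. Consequently $G\in B(r,s,a,d)$, and therefore $B(r,s,a,d)\neq\phi$, as required.

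The main obstacle, and the feature that distinguishes this proof from that of Theorem~\ref{T30}, is exactly that the self-contained edge-counting contradiction is unavailable once $r+a$ is even; the non-factorability must therefore be routed through Theorem~\ref{T12}(iv). The only remaining point to watch is the realizability of the bi-degreed graph as a \emph{simple} graph, which is why one may prefer the complete-graph-with-deletions construction, whose realizability needs no side condition relating $d$, $a$ and $r$.
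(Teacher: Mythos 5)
Your proposal has a genuine gap, and it sits exactly where you put all the weight: the appeal to Theorem~\ref{T12}(iv). That theorem is a statement about the universally quantified assertion ``\emph{every} $(d,d+s)$-simple graph has an $(r,r+a)$-factorization with $x$ factors'': its ``only if'' half (Lemma~\ref{L14}) says that when $\frac{d+s}{r+a}\le x<\frac{d}{r}$ fails, \emph{some} $(d,d+s)$-graph lacks such a factorization, not that every such graph --- in particular your constructed $G$ --- lacks one. Read pointwise, the statement is simply false: with $d=xr$, the complete bipartite graph $K_{d,d}$ is a $(d,d+s)$-graph, and grouping the colour classes of a proper $d$-edge-colouring into $x$ sets of $r$ colours gives an $(r,r+a)$-factorization with exactly $x=\frac{d}{r}$ factors, even though the strict inequality is violated. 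So your argument establishes non-factorability neither for your main construction nor for your fallback graph ($K_{d+s+1}$ with $s$ edges deleted at one vertex); for the fallback no argument is offered at all, and the obvious parity check gives no obstruction there, since $s=xa\equiv x\pmod 2$ allows the $s$ deficiencies to be distributed an odd number per factor.

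Worse, your main construction is probably factorable, because your premise that ``the self-contained edge-counting contradiction is unavailable once $r+a$ is even'' is wrong --- it is unavailable only for the particular placement you copied from Theorem~\ref{T30}. In your graph every putative factor has degree sum $(xr+1)(r+a)+x(r+a)r=(r+a)(2xr+1)$, which is even, so counting yields nothing. The paper's proof of Theorem~\ref{T31} instead mirrors the construction: $|M|=xr$ vertices of degree $x(r+a)$ and $|N|=x(r+a)+1$ vertices of degree $xr$, with the auxiliary graph $H$ placed on $N$, carrying $\frac{xr}{2}$ edges when $x$ is even (and, when $x$ is odd, $\frac{xr+1}{2}$ edges together with one vertex of $M$ of degree $x(r+a)-1$). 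Then each factor has degree sum at least $xr(r+a)+\bigl(x(r+a)+1\bigr)r=2xr^2+2xra+r$, which is \emph{odd} because $r$ is odd, so $|E(F_i)|\ge xr^2+xra+\left\lceil\frac{r}{2}\right\rceil>xr^2+xra+\frac{r}{2}$; summing over the $x$ factors gives $|E(G)|>|E(G)|$, the desired contradiction. In short, the counting argument survives when $r$ and $a$ are both odd provided you hang the odd parity on $r$ (the $N$-side) rather than on $r+a$ (the $M$-side); as a bonus, placing $H$ on the large side $N$ makes it trivially realizable as a simple graph, which removes the realizability worry that drove you to the fallback construction in the first place.
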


\begin{proof}

 We separate out the cases $x$ even and $x$ odd. Although these are similar, it is easier for the reader if they are treated separately.

\vspace{0.25cm}
\noindent{\bf Case 1:} Let $G$ be a bi-degreed simple graph with vertex sets $M$ and $N$ where $|M|=xr$ and $|N|=x(r+a)+1$. Let $H$ be a simple graph with $V(H) \subset N$ and $|E(H)| = \frac{xr}{2}$. Label the vertices of $H$ with labels $a_1, a_2, \ldots, a_{xr}$ in such a way that if $v \in V(H)$ then $v$ receives $d_H(v)$ labels. Also assign the labels $a_1, a_2, \ldots, a_{xr}$ to the vertices of $M$, assigning one label to each vertex.

We have $H$ placed on the vertices of $N$. Then to form $G$ from this, join each vertex $v$ of $M$ to each vertex of $N$ except the vertex with the same label as $v$. Then, for $v \in M$, $d_G(v) = x(r+a)$ and, for $v \in N$, $d_G(v) = xr$.

Notice that
$$|E(G)| \; = \; xr(x(r+a)) + \frac{xr}{2} \; = \; x^2 r^2 + x^2ra + \frac{xr}{2}\;.$$
If $G$ has an $(r,r+a)$-factorization with $x$ factors, let $\{F_1, \ldots, F_x\}$ be such a set of factors. Each $F_i$ will have $r+a$ edges incident with each vertex of $M$, so for $1 \le i \le x$, 
$$\sum_{v\in V(G)} d_{F_i}(v) \; \ge \; xr(r+a) + (x(r+a)+1)r \; = \; 2xr^2 + 2xra + r\,.$$
Therefore, for each $i$, $1 \le i \le x$, 
$$|E(F_i)| \; \ge \; xr^2 + xra + \left\lceil\frac{r}{2}\right\rceil\,.$$
Therefore
$$\begin{array}{lll} |E(G)| & \ge & x \displaystyle\left(xr^2+xra+\left\lceil\frac{r}{2}\right\rceil\right)\vspace{0.25cm} \\
& = & x^2 r^2 + x^2ra + x\displaystyle \left\lceil\frac{r}{2}\right\rceil \vspace{0.25cm} \\
& > & x^2 r^2 + x^2 ra + x\displaystyle\frac{r}{2}\,, \;\; {\rm since} \; r \; {\rm is \; odd}, \vspace{0.25cm} \\
& = & |E(G)|\,,\end{array} $$
a contradiction.

Thus $G$ has no $(r,r+a)$-factorization with $x$ factors when $x$ is even.

\vspace{0.25cm}
\noindent\textbf{An aside.}

In Figure 3 we give an example with $r=a=1$ and $x=2$.

\begin{figure}[h]
	\centering
		\includegraphics[width=110mm]{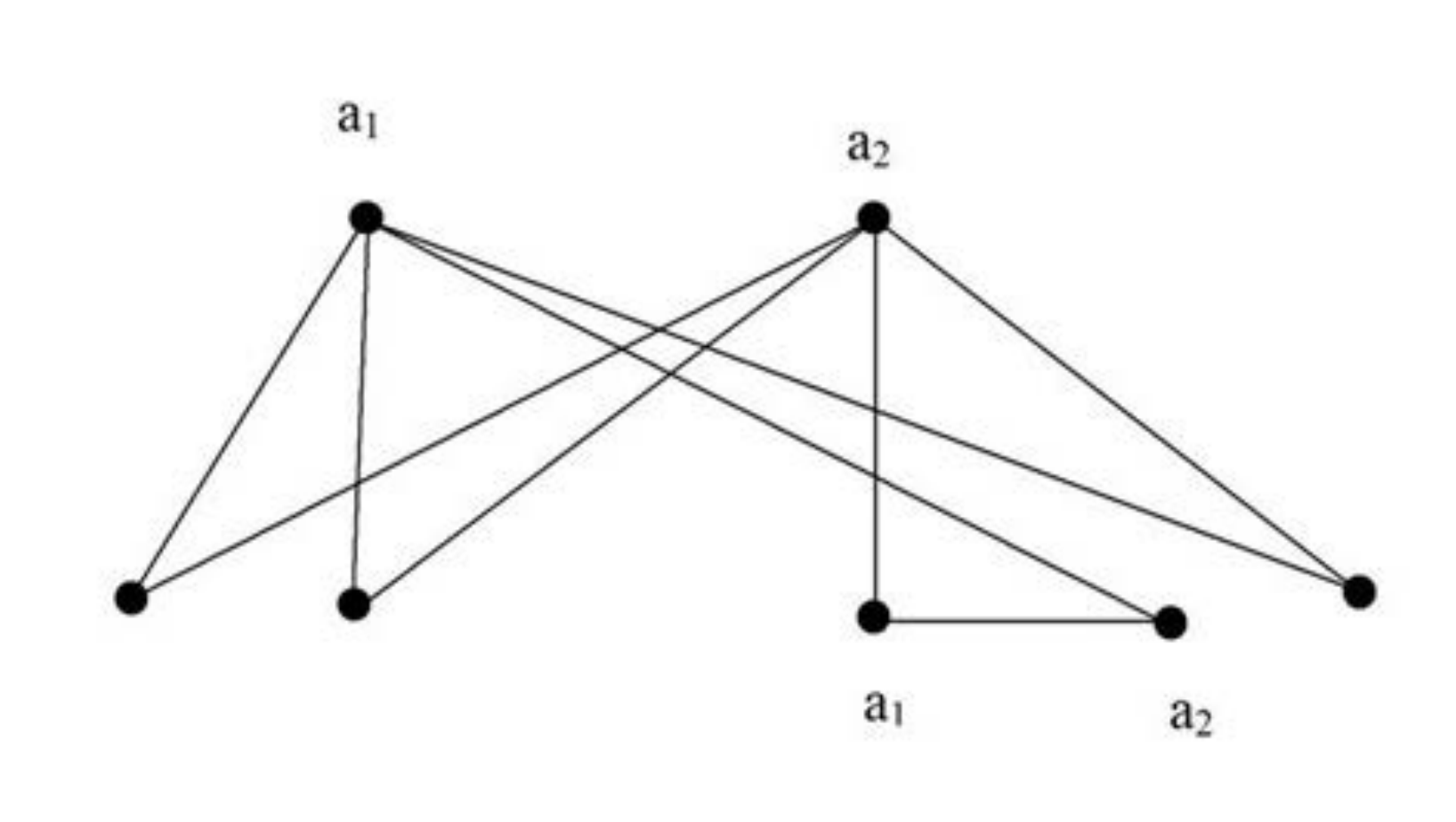}
	\caption{ A bidegreed $(2,4)$-simple graph with no $(1,2)$-factorization; an example of a graph in $B(1,2,1,2)$.}
\end{figure}

\vspace{0.25cm}
\noindent\textbf{Case 2: Let $x$ be odd}

Let $G$ be a simple graph with vertex sets $M \cup N$ where $|M| = xr$ and $|N| =  x(r+a)+1$. The vertices of $N$ will have degree $xr$ and all except one vertex of $M$ will have degree $x(r+a)$, with one vertex having degree $x(r+a)-1$. Let $H$ be a simple graph with $V(H) \subset N$ and $|E(H)| = \frac{xr+1}{2}$. Label the vertices of $H$ with labels $a_1, a_2, \ldots, a_{xr}, a_{xr+1}$ in such a way that labels $a_{xr}$ and $a_{xr+1}$ are assigned to different vertices of $H$ and, if $v \in V(H)$, then $v$ receives $d_H(v)$ labels. Also assign the labels $a_1, a_2, \ldots, a_{xr+1}$ to the vertices of $M$, with one vertex, say $v_{xr}$ receiving two labels, say $a_{xr}$ and $a_{xr+1}$, and the remaining $xr-1$ vertices receiving one label from $a_1, a_2, \ldots, a_{xr-1}$ each.

We have $H$ already placed on the vertices of $N$. To form $G$ from this, first join each vertex $v$ of $M \backslash \{v_{xr}\}$ to each vertex of $N$ except the vertex with the same label as $v$. Join $v_{xr}$ to all vertices of $N$ except the vertices with labels $a_{xr}$ and $a_{xr+1}$. Then, for $v \in N$, $d_G(v)=xr$, for $v\in M \backslash \{v_{xr}\}$, $d_{G(v)} = x(r+a)$ and $d_{G(v_x r)} = x(r+a)-1$. Then
$$\begin{array}{lll}|E(G)| & = & xr(x(r+a)) - 1 + \displaystyle \frac{xr+1}{2} \vspace{0.25cm} \\
& = & x^2 r^2 + x^2 ra + \displaystyle \frac{xr}{2} - \frac{1}{2}\;. \end{array}$$
If $G$ has an $(r,r+a)$-factorization with $x$ factors, let $\{F_1, F_2, \ldots, F_x\}$ be such a set of factors. Then, for all except one $i$, $F_i$ will have $r+a$ edges incident with each vertex of $M$, but for one $i$, say $i=x$, $F_i$ will have $r+a-1$ edges incident with $v_{xr}$, but will have $r+a$ edges incident with each other vertex of $M$. Therefore
$$\begin{array}{lll}\displaystyle\sum_{v\in V(G)} d_{F_i}(v) & = & \left\{\begin{array}{ll}xr(r+a)+(x(r+a)+1)r\,, & {\rm if} \;\;i\neq x\,, \vspace{0.2cm} \\
xr(r+a)-1+(x(r+a)+1)r\,, & {\rm if} \;\; i=x\,, \end{array}\right.  \vspace{0.25cm} \\
& = & \left\{\begin{array}{ll} 2xr^2 + 2xra + r\,, & {\rm if} \;\; i \neq x\,, \vspace{0.2cm} \\
2xr^2 + 2xra + r-1\,, & {\rm if} \;\; i=x\,.\end{array}\right. \end{array}$$
Therefore
$$|E(G)| \; \; \ge \;\; \left\{\begin{array}{ll} xr^2 + xra + \displaystyle \left\lceil\frac{r}{2}\right\rceil\,, & {\rm if} \;\; i \neq x\,, \vspace{0.25cm} \\
xr^2 + xra + \displaystyle \frac{r-1}{2}\,, & {\rm if} \;\; i=x\,.\end{array}\right.$$
Therefore
$$\begin{array}{lll}|E(G)| & \ge & (x-1)\left(xr^2 + xra + \displaystyle\left\lceil\frac{r}{2}\right\rceil\right) + xr^2 + xra + \displaystyle\left\lceil\frac{r-1}{2}\right\rceil \vspace{0.25cm} \\
& = & x^2 r^2 + x^2 ra + x \displaystyle \left\lceil\frac{r}{2}\right\rceil - 1 \vspace{0.25cm} \\
& > & x^2 r^2 + x^2 ra + \displaystyle \frac{xr}{2} - \frac{1}{2} \quad {\rm since} \;\; x \ge 3\,, \vspace{0.25cm} \\
& = & |E(G)|\,, \end{array}$$
a contradiction (noting that $x\left\lceil\frac{r}{2}\right\rceil - 1 > \frac{xr}{2} - \frac{1}{2}$ when $x \ge 3$).

Therefore $G$ has no $(r,r+a)$-factorization when $x \ge 3$, $x$ odd.

\end{proof}

\newpage

In Figure 4 we illustrate the construction used in Case 2 in Theorem \ref{T31}.

\begin{figure}[h]
	\centering
		\includegraphics[width=110mm]{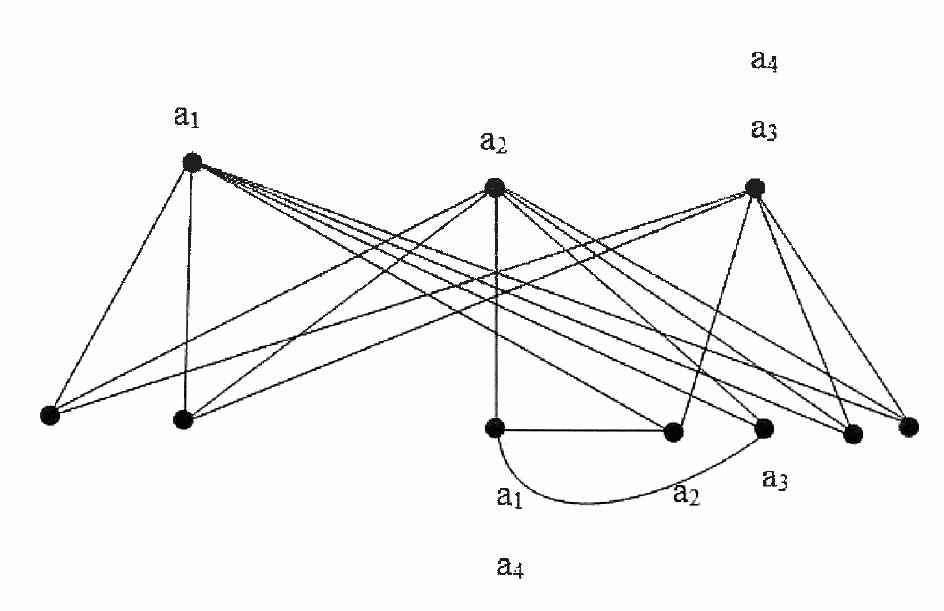}
	\caption{ A bidegreed $(3,6)$-simple graph with no $(1,2)$-factorization; an example of a graph in $B(1,3,1,3)$.}
\end{figure}

\vspace{0.5cm}
\noindent\textbf{Some further problems}

\vspace{0.25cm}
\noindent 1. C.J.H. McDiarmid has pointed out to the authors that from Theorem 4.1 of his interesting paper \cite{McDiarmid83} on unimodular matrices follows this fact about $(r,r+a)$-factorizations of bipartite multigraphs. Given a graph $G$ and non-negative integers $r$ and $a$, let $F_{\{r,a\}}(G)$ be the set of integers $x$ such that $G$ has an $(r,r+a)$-factorization with $x$ factors. Then if $G$ is a bipartite multigraph, $F_{\{r,a\}}(G)$ is an interval of integers.

He also remarked that, following arguments of the authors (which may be found in Rajkumar's thesis), it follows that if $r$ and $a$ are both even, and $G$ is any pseudograph, then $F_{\{r,a\}}(G)$ is again an interval of integers. [Recall that a pseudograph is a multigraph in which multi-loops are permitted, with a loop contributing $2$ to the degree of the vertex it is on.].

The question remains if this is also true when one or both of $a$ and $r$ is odd.

\vspace{0.25cm}
\noindent 2. It remains to determine the threshold number $\mu(r,s,a,t)$ for multigraphs (without loops). Theorem \ref{T7} seems to indicate that this will not be an easy task.

\bibliographystyle{plain}

\vspace{0.5cm}
\noindent A.J.W. Hilton, Department of Mathematics and Statistics, University of Reading, Whiteknights, Reading RG6 6AX, UK  \\
email: a.j.w.hilton@reading.ac.uk \\
\noindent Also:  School of Mathematical Sciences, Queen Mary University of London, Mile End Road, London E1 4NS, UK \\
email: a.hilton@qmul.ac.uk

\vspace{0.25cm}
\noindent A. Rajkumar, School of Mathematical Sciences, Queen Mary University of London, Mile End Road, London E1 4NS, UK \\
email: a.rajkumar@qmul.ac.uk

\end{document}